\newtheorem{theorem}{Theorem}[section]
\newtheorem{lemma}[theorem]{Lemma}
\newtheorem{corollary}[theorem]{Corollary}
\newtheorem{proposition}[theorem]{Proposition}
\theoremstyle{definition}
\newtheorem{definition}[theorem]{Definition}
\theoremstyle{remark}
\newtheorem{remark}[theorem]{Remark}
\newcommand\R{\mathbb{R}}
\newcommand\T{\mathbb{T}}
\newcommand\C{\mathbb{C}}
\newcommand\area{\mathrm{Area}}
\newcommand\dist{\mathrm{dist}}
\newcommand\supp{\mathrm{supp}}
\newcommand\tr{\mathrm{tr}}
\numberwithin{equation}{section}
\begin{document}

\title[Symplectic non-squeezing for KdV on $\mathbb R$]{Symplectic non-squeezing for the KdV flow on the line}

\author{Maria Ntekoume}
\address{Department of Mathematics, University of California, Los Angeles, CA 90095, USA}

\email{mntekoume@math.ucla.edu}





\begin{abstract}
We show symplectic non-squeezing for the KdV equation on the line $\R$. This is achieved via finite-dimensional approximation. Our choice of finite-dimensional Hamiltonian system that effectively approximates the KdV flow is inspired by the recent breakthrough in the well-posedness theory of KdV in low regularity spaces, relying on its completely integrable structure (\cite{KV18}). The employment of our methods also provides us with a new concise proof of symplectic non-squeezing for the same equation on the circle $\T$, recovering the result of \cite{CKSTT}. 
\end{abstract}

\maketitle

\section{Introduction}

We consider the real-valued Korteweg--de Vries equation on the line:
\begin{equation} \label{KdV} \tag{KdV}
    \frac{d}{dt} q = -q'''+ 6q q'.
\end{equation}
This equation was introduced over a hundred years ago in \cite{KdV} to describe the evolution of long waves in shallow channels of water. In particular, it sought to mathematically explain the observation of solitary waves. Its physical significance, along with its unique mathematical features, have captured the interest of researchers ever since. Firstly, the KdV equation is a Hamiltonian evolution. It is also one of the most prominent examples of a completely integrable system.

The setting in which \eqref{KdV} can be viewed to be Hamiltonian is that of a symplectic Hilbert space. Consider a Hilbert space $\mathbb H$ and $\omega$ a symplectic form on $\mathbb{H}$, that is, a nondegenerate, antisymmetric form $\omega:\mathbb{H} \times \mathbb{H}\to \mathbb{C}$. A Hamiltonian function $H: \mathbb{H}\to \mathbb{R}$ gives rise to dynamics of the form
\begin{align*}
    \dot q= X_H(q).
\end{align*}
Here $X_H$ is the vector field given by
\begin{align*}
    \omega(v,X_H(q))=\frac{d}{d\varepsilon}|_{\varepsilon=0} H(q+\varepsilon v).
\end{align*}

Indeed, the KdV equation is, formally at least, the Hamiltonian evolution in the symplectic space $$\dot H^{-\frac 12}(\mathbb R):=\left\{q: \int_{\mathbb R}q(x) dx=0, \int_{\mathbb R} \frac{|\hat q(\xi)|^2}{|\xi|} d\xi<\infty  \right\}$$
associated to the Hamiltonian
$$H_{KdV}(q)=\int_{\mathbb R} \frac 1 2 (q')^2 +q^3 dx.$$
Considering the symplectic form
\begin{align*}
    \omega_{-\frac 12}(u,v)= \int_{\mathbb R} u \partial_x ^{-1} v dx
\end{align*} 
in the Hilbert space $\dot H^{-\frac 12}(\mathbb R)$, simple calculations show that
\begin{align*}
    \frac{d}{d\varepsilon}|_{\varepsilon=0} H_{KdV}(u+\varepsilon v)&= \int_{\mathbb R} u'v'+3u^2 v dx=\int_{\mathbb R} (-u''+3u^2) v dx\\
    &= \int_{\mathbb R} \partial_x^{-1}(-u'''+6u u') v dx \\
    &= \omega_{-\frac 12} (v, -u'''+6u u').
\end{align*}

In the context of Hamiltonian mechanics, Liouville's theorem asserts that Hamiltonian flows preserve phase-space volume. This implies that in order for a Hamiltonian evolution to flow one region of space to another, the volume of the latter must exceed that of the former. The natural question that emerges is whether preservation of volume is the only obstruction for the existence of a symplectomorphism between two regions. Although this is the case in one (complex) dimension, the situation is much different in higher dimensions. 

\begin{theorem} [Gromov, \cite{Gromov}]
\label{gromov}
Fix $z \in \C^n$, $l\in \C^n$ with unit length, $\alpha \in \C$, and $0<r<R<\infty$. Let $B(z, R)$ denote the ball of radius $R$ centered at $z$ and suppose $\phi: B(z, R) \to \C^n$ is a smooth symplectomorphism (with respect to the standard structure). Then there exists $\zeta \in B(z, R)$ so that
$$|\langle l, \phi(\zeta)\rangle -\alpha|>r.$$
\end{theorem}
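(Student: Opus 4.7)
The plan is to reduce the stated form to the classical Gromov non-squeezing theorem -- that no ball of radius $R$ symplectically embeds into a symplectic cylinder of radius $r<R$ -- and then to sketch its proof via symplectic capacities.

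First I would normalize using the invariance of the statement under affine symplectomorphisms of $(\C^n, \omega_{\mathrm{std}})$. Translations are symplectic, and unitary maps $U \in U(n)$ preserve $\omega_{\mathrm{std}}$ (indeed $U(n) = \mathrm{Sp}(2n,\R) \cap \mathrm{O}(2n)$); pre-composing $\phi$ with translation by $z$ in the domain, translating by $-\alpha \bar{l}$ in the range, and applying a unitary carrying $e_1$ to $l$, the claim reduces to: every symplectic embedding $\phi\colon B(0,R) \to \C^n$ has some $\zeta$ with $|\phi(\zeta)_1| > r$. The contrapositive is that $\phi$ would then embed $B(0,R)$ into the standard cylinder
$$Z(r) := \{ w \in \C^n : |w_1| \le r \},$$
and this is what must be excluded when $r<R$.

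Next I would invoke the machinery of symplectic capacities. A symplectic capacity $c$ is a monotone, conformally covariant assignment of a value in $[0,\infty]$ to open subsets of symplectic manifolds, satisfying the normalization $c(B(\rho)) = c(Z(\rho)) = \pi\rho^2$. Given such a $c$, the hypothetical embedding would force $\pi R^2 = c(B(0,R)) \le c(Z(r)) = \pi r^2$, contradicting $R>r$. Hence non-squeezing follows formally once a capacity is produced.

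The main obstacle is constructing such a $c$. My preferred choice is the Hofer--Zehnder capacity, defined as the supremum of $\max H$ over non-negative $H \in C_c^\infty(U)$ whose Hamiltonian flow has no non-constant periodic orbits of period $\le 1$. Monotonicity is tautological from the definition, and the lower bound $c_{HZ}(B(\rho)) \ge \pi\rho^2$ comes from testing against explicit quadratic models. The delicate half is the matching upper bound $c_{HZ}(Z(\rho)) \le \pi\rho^2$: one must show that any Hamiltonian supported in a compact piece of the cylinder with sufficiently large oscillation admits a fast non-constant periodic orbit. This is established by a minimax argument for the symplectic action functional on the loop space, using the $S^1$-symmetry of reparametrization together with the linking properties of loops that wind in the bounded complex-line direction of the cylinder. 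Gromov's original approach instead produces pseudo-holomorphic spheres of symplectic area $\pi\rho^2$ through prescribed points, but the core difficulty -- a global existence result for critical objects of controlled action or area -- is the same.
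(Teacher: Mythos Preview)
The paper does not prove this statement; it is cited as Gromov's theorem from \cite{Gromov} and used as a black box throughout. So there is no paper proof to compare against.

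Your sketch is a correct outline of one of the standard proofs. The affine reduction is fine: translations and unitaries are symplectic, and after normalizing, the contrapositive is precisely an embedding $B(0,R)\hookrightarrow Z(r)$. The capacity argument is the right formal skeleton, and your identification of the hard step --- the upper bound $c_{HZ}(Z(\rho))\le \pi\rho^2$, equivalently the existence of fast periodic orbits for Hamiltonians of large oscillation --- is accurate. The minimax argument you describe is the Hofer--Zehnder proof; Gromov's original argument via pseudo-holomorphic curves, which you mention, gives a different capacity (the Gromov width) but the same conclusion. Either route is acceptable here. One small quibble: in your normalization, the range translation should be by a vector $v$ with $\langle l,v\rangle=\alpha$; writing $-\alpha\bar l$ presumes a particular inner-product convention, so it would be cleaner just to say ``translate so that $\alpha$ becomes $0$.''
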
 

In other words, a smooth symplectomorphism cannot map a ball wholly inside a cylinder of smaller radius, despite the fact that the volume of the ball is finite and the volume of the cylinder is infinite. In particular, this indicates that, although symplectomorphisms preserve volume, it is far more restrictive to be a symplectic transformation than to be volume-preserving. Symplectic non-squeezing can also be viewed as a classical analogue of the uncertainty principle; if a collection of particles initially spread out all over a ball, then one cannot squeeze the collection into a statistical state in which the momentum and position in some direction spread out less than initially.

The main goal of this paper is to show that the analogue of Gromov's theorem holds for the infinite-dimensional dynamics associated to KdV on the line. 

\begin{theorem}
\label{sns KdV R}
Let $z\in \dot H^{-\frac 1 2}(\mathbb R)$, $l\in H^{\frac 1 2} (\mathbb R)$ with $\|l\|_{\dot H^{\frac 1 2}(\mathbb R)}=1$, $\alpha\in \mathbb C$, $0<r<R<\infty$, and $T>0$. Then there exists $q_0 \in \{ q\in \dot H^{-\frac 1 2}(\mathbb R) : \|q-z\|_{\dot H^{-\frac 1 2}(\mathbb R)}< R\}$ such that the solution $q$ to \eqref{KdV} with initial data $q(0)=q_{0}$ satisfies 
$$|\langle l, q(T)\rangle -\alpha|>r.$$
\end{theorem}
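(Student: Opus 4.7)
\emph{Proof proposal.} The plan is proof by contradiction combined with a finite-dimensional symplectic approximation. Suppose the conclusion fails: for every $q_0$ in the open ball $B(z, R) := \{q \in \dot H^{-\frac 1 2}(\mathbb R) : \|q - z\|_{\dot H^{-\frac 1 2}(\mathbb R)} < R\}$, the time-$T$ KdV flow $\Phi_T$ satisfies $|\langle l, \Phi_T(q_0)\rangle - \alpha| \le r$. Fix auxiliary radii $r < r' < R' < R$ to absorb approximation errors; it suffices to contradict the assumption that $\Phi_T$ maps $\overline{B(z, R')}$ into $\{q : |\langle l, q\rangle - \alpha| \le r'\}$.

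For the finite-dimensional approximation I would follow the integrable-structure paradigm of \cite{KV18}. The perturbation determinant of the Lax operator $L_q = -\partial_x^2 + q$ generates a family of smoothed, commuting Hamiltonians $H_\kappa$ on $\dot H^{-\frac 1 2}(\mathbb R)$; the associated flows $\Phi_T^\kappa$ are globally well-posed, Hamiltonian with respect to $\omega_{-\frac 1 2}$, and converge to $\Phi_T$ as $\kappa \to \infty$ uniformly on bounded subsets. To reduce to finitely many degrees of freedom I would periodize the data onto a large torus $\mathbb T_L$ and restrict to the symplectic subspace $V_{N, L} \subset \dot H^{-\frac 1 2}(\mathbb T_L)$ spanned by the first $N$ nonzero Fourier modes; the Hamiltonian flow $\Phi_T^{N, L, \kappa}$ generated on $V_{N, L}$ by $H_\kappa|_{V_{N, L}}$ is then a genuine finite-dimensional symplectomorphism to which Gromov's Theorem~\ref{gromov} applies.

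The argument then proceeds in three steps. \emph{Step 1 (uniform approximation):} for every $\epsilon > 0$, choose $\kappa, L, N$ so that
\begin{equation*}
    \sup_{q_0 \in \overline{B(z, R')}} \bigl| \langle l, \Phi_T(q_0) - \Phi_T^{N, L, \kappa}(P_{N, L} q_0)\rangle\bigr| < \epsilon,
\end{equation*}
where $P_{N, L}$ denotes periodization followed by Fourier truncation. \emph{Step 2 (Gromov):} with $\epsilon < \tfrac 1 2 (r' - r)$, Theorem~\ref{gromov} on $V_{N, L}$ centered at $P_{N, L} z$ with radius $R'$ and linear functional $l|_{V_{N, L}}$ produces $q_0^{N, L, \kappa} \in V_{N, L}$ with $\|q_0^{N, L, \kappa} - P_{N, L} z\|_{\dot H^{-\frac 1 2}} < R'$ and $|\langle l, \Phi_T^{N, L, \kappa}(q_0^{N, L, \kappa})\rangle - \alpha| > r' - \epsilon$. \emph{Step 3 (limit passage):} extending each $q_0^{N, L, \kappa}$ back to $\mathbb R$ by a smooth spatial cutoff, extract a weakly convergent subsequence in $\dot H^{-\frac 1 2}(\mathbb R)$; weak lower semicontinuity of the norm places the limit $q_0$ in $\overline{B(z, R')}$, while weak continuity of $\langle l, \cdot \rangle$ combined with Step 1 yields $|\langle l, \Phi_T(q_0)\rangle - \alpha| > r$, contradicting the standing assumption.

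The main obstacle is Step 1. Because $\langle l, \cdot\rangle$ probes arbitrarily high frequencies as well as behavior at spatial infinity, uniform control over $q_0 \in \overline{B(z, R')}$ requires simultaneous equicontinuity in space and in frequency for the orbit $\{\Phi_t(q_0) : t \in [0, T]\}$. The conservation of the perturbation determinant and the commutator estimates from \cite{KV18} should deliver the uniform frequency-localization bounds needed to justify the Fourier truncation, while the off-diagonal decay of the Lax-operator Green's function should control the error introduced by periodizing $\mathbb R \to \mathbb T_L$. One must also verify that the smooth-cutoff extension used in Step 3 is compatible with the various norms, and that the symplectic form $\omega_{-\frac 1 2}$ restricts nondegenerately to $V_{N, L}$. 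The Gromov application and the weak-compactness limit are then essentially soft.
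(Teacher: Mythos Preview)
Your outline has the right ingredients---the commuting $H_\kappa$ flows, frequency truncation, periodization, Gromov, and a weak-limit extraction---and is in the same spirit as the paper. But the order of operations is inverted in a way that creates a genuine gap at Step~1, and Step~3 glosses over the key analytic input.

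\textbf{Step 1 as stated is not achievable, and the paper does not attempt it.} You ask for a uniform bound
\[
\sup_{q_0\in\overline{B(z,R')}}\bigl|\langle l,\Phi_T(q_0)-\Phi_T^{N,L,\kappa}(P_{N,L}q_0)\rangle\bigr|<\epsilon,
\]
but the periodization-plus-truncation map $P_{N,L}$ is not even defined on general $q_0\in\dot H^{-\frac12}(\mathbb R)$: periodization in the paper (Definition~\ref{def per}) is only for compactly supported functions. The paper circumvents this entirely by reversing the order. It first approximates only the \emph{fixed parameters} $z,l$ by compactly supported smooth functions and periodizes those (Section~\ref{sec;periodization}); then it applies Gromov on each finite-dimensional torus system to obtain witnesses $u_{n,0}$; and only \emph{afterwards} does it compare flows, but only along the specific sequence of witnesses, never uniformly over the ball. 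The delicate ``cutting and unwrapping'' of Section~\ref{sec;fda} (Theorem~\ref{fda}) is precisely the device that transfers the torus witnesses $u_{n,0}$ back to line data $q_{n,0}$---and that step requires the low-frequency truncation $P_{m_n<\dots\le M_n}$ (not just $P_{\le N}$), which your proposal omits; see Lemma~\ref{bump} and the discussion in the introduction.

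\textbf{Step 3 hides the real work.} You write that ``weak continuity of $\langle l,\cdot\rangle$'' closes the argument, but what you actually need is that the KdV flow itself respects weak $H^{-1}$ limits along bounded equicontinuous sequences: if $q_{n,0}\rightharpoonup q_0$, then $\Phi_T(q_{n,0})\rightharpoonup\Phi_T(q_0)$. This is Theorem~\ref{KdV weak} in the paper, proved via the analogous weak well-posedness for the $H_\kappa$ flows (Theorem~\ref{Hk weak}) together with the approximation Lemma~\ref{Q3}. It is not soft; it is one of the main contributions. Without it, the weak limit of the witnesses tells you nothing about $\Phi_T(q_0)$.

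Two smaller points: the argument in the paper is direct (construct the witness), not by contradiction; and the nonlinearity $g(\kappa;q)$ in the $H_\kappa$ flow is only defined for small $H^{-1}$ data, so the paper first proves the small-data case and then invokes the scaling symmetry of KdV to handle arbitrary $z$ and $R$. Your proposal does not mention this reduction.
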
 

As an offshoot of our methods, we also obtain a much simpler proof for the known corresponding result on the circle.

\begin{theorem}
\label{sns KdV T}
Let $z\in \dot H^{-\frac 12}(\mathbb T)$, $l\in  H^{\frac 12} (\mathbb T)$ with $\|l\|_{\dot H^{\frac 1 2}(\mathbb T)}=1$, $\alpha\in \mathbb C$, $0<r<R<\infty$, and $T>0$. Then there exists $q_0 \in \{ q\in \dot H^{-\frac 12}(\mathbb T) : \|q-z\|_{\dot H^{-\frac 1 2}(\mathbb T)}< R\}$ such that the solution $q$ to \eqref{KdV} with initial data $q(0)=q_0$ satisfies 
$$|\langle l, q(T)\rangle -\alpha|>r.$$
\end{theorem}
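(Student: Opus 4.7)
The plan is to reduce Theorem \ref{sns KdV T} to Gromov's Theorem \ref{gromov} applied on a sequence of finite-dimensional symplectic subspaces of $\dot H^{-\frac 12}(\T)$, and then to pass to the limit using the low-regularity well-posedness theory of \cite{KV18}. For each positive integer $N$, let $V_N \subset \dot H^{-\frac 12}(\T)$ denote the $2N$-dimensional real subspace of mean-zero trigonometric polynomials with Fourier support in $\{1 \leq |n| \leq N\}$. The form $\omega_{-\frac 12}$ restricts non-degenerately to $V_N$ and is compatible with the inner product of $\dot H^{-\frac 12}$, so $(V_N, \omega_{-\frac 12}|_{V_N})$ is symplectomorphic to $\C^N$ with its standard structure.

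The first step is to construct, on $V_N$, a smooth Hamiltonian $H^{(N)}$ whose flow $\Phi_t^{(N)}$ approximates the KdV flow uniformly on bounded subsets of $V_N$ in the $\dot H^{-\frac 12}$ norm as $N\to\infty$. A naive candidate is the restriction of $H_{KdV}$ to $V_N$, but a cleaner option, motivated by the approach of \cite{KV18}, is to use a finite-dimensional truncation of the commuting Hamiltonians $H_\kappa$ derived from the perturbation determinant, with parameter $\kappa=\kappa(N)\to\infty$. In either case $\Phi_t^{(N)}$ is globally defined on $V_N$ by finite-dimensional ODE theory and preserves $\omega_{-\frac 12}|_{V_N}$.

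Second, fix $T>0$ and a small $\delta>0$, and apply Gromov's theorem on $V_N$ to the symplectomorphism $\Phi_T^{(N)}$ with data $P_N z$, $\ell_N := P_N l/\|P_N l\|_{\dot H^{\frac 12}}$, $\alpha$, and radii $r+\delta<R-\delta$, where $P_N$ denotes Fourier truncation. This yields $q_0^{(N)}\in V_N$ with $\|q_0^{(N)}-P_N z\|_{\dot H^{-\frac 12}}<R-\delta$ and $|\langle \ell_N,\Phi_T^{(N)}(q_0^{(N)})\rangle-\alpha|>r+\delta$. The sequence $\{q_0^{(N)}\}$ is uniformly bounded in $\dot H^{-\frac 12}$, so a subsequence converges weakly to some $q_0$ in the open ball of radius $R$ around $z$. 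Using the continuity of the KdV flow on bounded subsets of $\dot H^{-\frac 12}(\T)$ from \cite{KV18}, together with the approximation $\Phi_T^{(N)}\to \Phi_T^{KdV}$ from Step~1, we pass to the limit in the inequality (the $\delta$-errors absorbing both the approximation error and the loss from weak convergence against the linear functional $\ell_N \to l$) to deduce that $q_0$ satisfies $|\langle l,\Phi_T^{KdV}(q_0)\rangle-\alpha|>r$.

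The main obstacle is Step~1: establishing uniform-on-bounded-sets convergence of the finite-dimensional Hamiltonian flows $\Phi_t^{(N)}$ to the KdV flow in the rough symplectic norm $\dot H^{-\frac 12}$. Standard Bourgain-type smoothing estimates do not close at this regularity, and naive frequency truncations do not preserve the Hamiltonian structure one needs in order to invoke Gromov. The complete integrability of KdV, harnessed through the perturbation determinant as in \cite{KV18}, is what makes the scheme work: it furnishes commuting Hamiltonian flows that are equicontinuous in $\dot H^{-\frac 12}$ and whose differences with KdV can be controlled via trace formulae rather than PDE energy methods.
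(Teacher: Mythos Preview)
Your overall strategy matches the paper's: approximate by finite-dimensional Hamiltonian flows built from the $H_\kappa$ family of \cite{KV18}, apply Gromov's theorem there, and pass to the limit. Two points, however, deserve attention.

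First, the paper decouples the two limits rather than coupling $\kappa=\kappa(N)$. It fixes $\kappa$, shows non-squeezing for \eqref{eq:Hk} on $\T$ by letting $N\to\infty$ in the truncated flows $H_\kappa^N$ (Theorem~\ref{sns Hk T}), and only afterwards lets $\kappa\to\infty$ to reach KdV. The advantage is that for fixed $\kappa$ the nonlinearity $q\mapsto g'(q)$ is Lipschitz in $H^{-1}$, so the $H_\kappa^N\to H_\kappa$ step closes by a direct Gr\"onwall argument; the $H_\kappa\to$ KdV step is then exactly Lemma~\ref{Q3}. Your coupled limit may well work, but it requires uniform control that the decoupled argument sidesteps. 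The paper also exploits that on $\T$ a bounded sequence in $\dot H^{-\frac12}$ has a subsequence converging \emph{strongly} in $H^{-1}$ (Lemma~\ref{moving centers}); you only invoke weak convergence, which makes the limiting step harder than necessary.

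Second, there is a genuine gap in your claim that ``$\Phi_t^{(N)}$ is globally defined on $V_N$ by finite-dimensional ODE theory.'' The Hamiltonian $H_\kappa^N$ involves $\alpha(\kappa;\cdot)$, which is only defined (and only Hamiltonian-generates a flow) on the ball where $\|\sqrt{R_0}\,q\,\sqrt{R_0}\|_{\mathrm{op}}<1$; finite-dimensional ODE theory gives local existence, but not global existence on all of $V_N$. The paper handles this by first proving the theorem under the smallness hypothesis $\|z\|_{\dot H^{-\frac12}}+R<c\,\kappa^{\frac12}\delta_0$ (so that all relevant data stay in the region where $g$ makes sense, cf.\ Theorems~\ref{HkN wp} and~\ref{sns Hk T}), and then removes the smallness assumption at the very end by the KdV scaling $q\mapsto\lambda^2 q(\lambda^3 t,\lambda x)$, under which $\|\cdot\|_{\dot H^{-\frac12}}$ scales by $\lambda$. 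Your outline does not account for this restriction, and without it the finite-dimensional flow you want to feed into Gromov's theorem is not defined on the full ball.
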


While it is tempting to believe that any property that holds for any finite-dimensional Hamiltonian system must carry over to infinite-dimensional systems, no such universal result is known. Although for infinite-dimensional PDE finite-time blow-up can occur despite exact conservation of norm, there is no analogue of this phenomenon in finite dimensions; blow-up for ODEs occurs only via norm blow-up. Another key distinction is that in $\mathbb{C}^n$ the weak and norm topologies coincide. This is not true in infinite-dimensional Hilbert spaces. This is one of the intriguing aspects of non-squeezing in this setting, as it commingles these two topologies: the initial data is measured in norm, but the solution at time $T$ is examined from the standpoint of the weak topology.

Although no universal analogue of Theorem \ref{gromov} is known in infinite dimensions, symplectic non-squeezing has been proved for several Hamiltonian PDE. The proof of these results always depends heavily on the model in question. The study of symplectic non-squeezing for Hamiltonian PDE was initiated by Kuksin. In \cite{Ku95} he proved an analogue of Gromov's theorem for flows that consist of a linear operator and a compact smooth operator and provided several examples that fall into this framework. This is made possible by using finite-dimensional approximation to extend the Hofer-Zehnder capacity to infinite dimensional Hilbert spaces and thus obtain an infinite-dimensional symplectic invariant. Subsequently, Kuksin's result was used to prove symplectic non-squeezing for several other models (\cite{Bou95}, \cite{Rou10}).

Bourgain later proved symplectic non-squeezing for a flow that is not amenable to Kuksin's treatment, namely the defocusing cubic NLS on the one-dimensional torus (\cite{Bou94}). Finite-dimensional approximation is once again at the core of his method; he introduces a (sharp) Fourier cutoff on the nonlinearity to approximate the full equation by a finite-dimensional system and applies Gromov's result to this approximate flow. 

Similar methods were employed in \cite{CKSTT} to obtain symplectic non-squeezing for the KdV flow on the circle. One of the main difficulties of this problem is that, although the KdV equation is subcritical in $\dot H^{-\frac 1 2}(\mathbb T)$, it does represent the end-point regularity for strong notions of well-posedness on the torus. The authors show that projecting the equation crudely as in the argument of Bourgain does not provide a good approximation and they adopt a smooth frequency truncation instead. Their method utilizes the Miura transform to pass to mKdV and take advantage of its better smoothing properties.

A simpler proof for the symplectic non-squeezing result for KdV was discovered by Hong and Kwon (\cite{HKwon}). Instead of the Miura map, they use a normal form transformation to obtain the result for the KdV as well as for the coupled KdV flow on the torus.

Higher-order KdV-type equations were treated in \cite{HKwak}. The authors use an argument based on Bourgain's sharp projection approximation, exploiting the better modulation effect from the non-resonant interaction that these equations enjoy compared to the KdV flow in order to avoid the use of the Miura map or the need for a smooth frequency truncation. Bourgain's method was also followed in \cite{Kwak} to prove that the forth-order cubic NLS on the one-dimensional torus has the symplectic non-squeezing property.

In \cite{M} Mendelson obtained symplectic non-squeezing results for the cubic nonlinear Klein--Gordon equation on the three-torus. This is a critical result in the sense that the regularity needed to define the symplectic form coincides with the scaling critical regularity of the equation. The proof relies again on a finite-dimensional approximation, using a combination of deterministic and probabilistic techniques.

More recently, non-squeezing was considered in infinite volume. This significant advance was made in the work of Killip, Visan, and Zhang (\cite{KVZ19}, \cite{KVZ16}) where they proved symplectic non-squeezing for the cubic NLS on $\mathbb R$ and $\mathbb R^2$. Note that the latter is also the first unconditional critical result. Their method relies on approximating the full flow by higher and higher frequency-truncated versions of the equation posed on larger and larger tori. For instance, the finite-dimensional systems used to approximate the cubic NLS on the line were
\begin{align*}
    i\partial u+\Delta u= P_{\leq N_n}\left(|P_{\leq N_n}u|^2 P_{\leq N_n}u\right)\qquad\text{on}\,\,\mathbb R/ L_n \mathbb Z
\end{align*}
with $N_n, L_n\to\infty$ as $n\to \infty$.

This novel strategy for proving symplectic non-squeezing results in the infinite volume setting has already proved to be extremely effective in several Hamiltonian PDE. For instance, in \cite{Miao} these techniques were implemented for certain mass subcritical forth-order Schr\"odinger equations on $\mathbb R$, while in \cite{Yang} Yang adapted this method to address all mass subcritical Hartree equations on $\mathbb R^d$ for $d\geq 2$. 

While the innovative techniques in \cite{CKSTT, HKwon} were successful in proving non-squeezing on the torus, the challenging question of establishing a similar result on the line was left open. The torus assumption is essential in both arguments, suggesting the need for new ideas in the infinite volume setting. In our treatment these new ideas stem from the completely integrable structure of the KdV equation.

The modern theory of complete integrability has its roots in the discovery by Gardner, Greene, Kruskal, and Miura in \cite{GGKM} of a method for solving the initial-value problem of the KdV equation with rapidly decaying initial data. This was achieved by drawing the connection between this problem and the spectral and scattering theory of the Schr\"odinger operator 
$$L(t)=-\partial_x^2 + q(x,t).$$ A more elegant expression of this connection and the integrable structure of KdV was provided soon afterwards by Lax in the form of the Lax pair
\begin{align*}
    L(t)&= -\partial_x^2 +q(t),\\
    P(t)&= -4 \partial_x^3+3 \partial_x q(t)+3 q(t)\partial_x;
\end{align*}
one can easily verify that $q$ is a solution to \eqref{KdV} if and only if
\begin{align*}
    \frac{d}{dt}L(t)= [P(t),L(t)].
\end{align*}

Since $P(t)$ is anti-self-adjoint, at every time $t$ the unitary operator $U(t)$ given by
\begin{align*}
    \frac{d}{dt}U(t)=P(t)U(t),\qquad U(0)=\text{Id}
\end{align*}
satisfies 
\begin{align*}
    L(t)=U(t)L(0)U(t)^\ast.
\end{align*}
This suggests that \eqref{KdV} preserves the spectral properties of $L$. We therefore recover an infinite sequence of conservation laws for the KdV, which can be expessed in the form of polynomials of $q$ and its derivatives, first discovered in \cite{MGK68} by Gardner, Kruskal, and Miura. For example, the first three of these conserved quantities are 
\begin{align*}
    M(q)=\int q(x)dx,\quad P(q)=\int \frac 12 q(x)^2 dx,\quad H_{KdV}(q)=\int \frac 12 q'(x)^2 +q(x)^3 dx
\end{align*}
and describe the conservation of mass, momentum, and energy, respectively. A powerful
consequence of these conservation laws is that, for every non-negative integer $m$, the $H^m$ norm of smooth solutions to \eqref{KdV} is bounded uniformly in time in terms of the corresponding norm of their initial data.

Even more significant for our work are the new low regularity conservation laws recently discovered independently both by Koch and Tataru \cite{KT}, as well as by Killip, Visan, and Zhang \cite{KVZ17}. Following the exposition of the latter, the authors observed that the perturbation determinant
\begin{align*}
    \det\left( \frac{-\partial_x^2 +q(t)+\kappa^2}{-\partial_x^2 +\kappa^2} \right)=\det\left( 1+\sqrt{R_0(\kappa)}q \sqrt{R_0(\kappa)}\right),\qquad R_0(\kappa)=(-\partial_x^2 +\kappa^2)^{-1}
\end{align*}
can be used to encapsulate the preservation of the spectral properties of the Lax operator $L$ by \eqref{KdV}.
After further renormalizations, the authors proposed the quantities
\begin{align*}
    \alpha(\kappa;q):= -\log \det\left(1+\sqrt{R_0(\kappa)}q \sqrt{R_0(\kappa)}\right) +\frac{1}{2\kappa}M(q)
\end{align*}
and proved that they are indeed conserved under the KdV flow for all large enough $\kappa\geq 1$. One of the main reasons behind this renormalization is that it allows us to express the conserved quantities in the series expansion
\begin{align*}
    \alpha(\kappa;q)= \sum_{l=2} ^\infty \frac{(-1)^l}{l} \tr\left\{(\sqrt{R_0}q\sqrt{R_0})^l\right\}.
\end{align*}
As it turns out, $\alpha(\kappa;q)$ captures the $H^{-1}$ norm of the part of $q$ that lives at frequencies $|\xi|\gtrsim \kappa$.

Furthermore, this new discovery was the catalyst for progress in the well-posedness theory of \eqref{KdV} in low regularity spaces. 
Building upon their work in \cite{KVZ17}, Killip and Visan established in \cite{KV18}  well-posedness in the optimal regularity space $H^{-1}$ both on the line and the circle, in the sense that the solution map extends uniquely from Schwartz space to a jointly continuous map
$\Phi : \mathbb R \times  H^{-1}\to H^{-1}$ (see Theorem \ref{KdV wp}). In the case of the torus this result had already been obtained by Kappeler and Topalov \cite{KapTop}. It is worth noting that it is sharp, as it has been shown in \cite{Mol1, Mol2} that continuous dependence on the initial data cannot hold in $H^s$ for $s<-1$ in both geometries, thus precluding well-posedness below $H^{-1}$.

Their method relies on considering the evolutions induced by the Hamiltonians
\begin{align*}
    H_\kappa(q):= -16 \kappa^5 \alpha(\kappa;q)+ 4\kappa^2 P(q).
\end{align*}
Observing that $\alpha(\kappa;q)-\frac{1}{2\kappa}M(q)$ acts as a generating function for the polynomial conserved quantities, the asymptotic expansion
\begin{align*}
    \alpha(\kappa;q)=\frac{1}{4\kappa^3}P(q)- \frac{1}{16 \kappa^5}H_{KdV}+O(\kappa^{-7})
\end{align*}
suggests that the Hamiltonian $H_\kappa$ can serve as a good approximation to $H_{KdV}$. They prove that the family of Poisson commuting flows associated with the Hamiltonians $H_\kappa$ are globally well-posed in $H^{-1}$, commute with the KdV flow, and the flow induced by the difference $H_{KdV} - H_{\kappa}$ is close to the identity in the $H^{-1}$ metric for $\kappa$ large on bounded time intervals, thus making it possible to obtain well-posedness for KdV by a limiting process.

Inspired by the innovations introduced in \cite{KV18} that we described above, we consider a new approach to the study of the symplectic behavior of \eqref{KdV}. In order to investigate whether the KdV flow can be accurately approximated by finite-dimensional models, we employ the approximation of \eqref{KdV} by the commuting flows induced by the Hamiltonians $H_\kappa$. This can be seen as a truncation in the Hamiltonian. Furthermore, the proof of Theorem \ref{sns KdV R} utilizes a finite-dimensional approximation by truncation in frequency as well as in space, similar to the one in \cite{KVZ19,KVZ16}. The approximation result we get allows us to transfer results for finite-dimensional systems, such as non-squeezing, to the PDE setting.

Firstly, we focus our efforts on proving a symplectic non-squeezing result for the $H_\kappa$ flows via approximation by finite-dimensional systems. This is a much easier task than proving this property for \eqref{KdV}; it turns out that these equations have a Lipschitz nonlinearity and enjoy finite speed of propagation. More specifically, for every fixed sufficiently large $\kappa$ we consider the frequency-truncated Hamiltonians
\begin{align*}
H_\kappa^{n}(q)=-16\kappa^5 \alpha(\kappa;P^{L_n}_{m_n<\dots \leq M_n} q)+4\kappa^2 P(q)
\end{align*}
and the induced Hamiltonian flows on the torus $\mathbb R/L_n \mathbb Z$ for appropriate parameters $m_n\to 0$, $M_n\to\infty$, and $L_n\to\infty$ as $n\to\infty$. As finite-dimensional systems, they demonstrate symplectic non-squeezing. If $u_n$ are the witnesses to non-squeezing for the $H_\kappa^n$ flows given by Gromov's theorem, we wish to extract a `limit' and show that it is, indeed, a witness to non-squeezing for the $H_\kappa$ flow. Unlike what was done in \cite{KVZ16} and \cite{KVZ19}, the removal of the frequency truncation and the extension from the large circle to the line happen simultaneously.

Of course, this limiting process is very delicate. We are working with a sequence $u_n(0)$ of periodic initial data with different periods, so there is no appropriate space that contains all the elements, making it impossible to talk about a limit. Our remedy is to find compactly supported initial data $q_n(0)\in \dot H^{-\frac 12}(\mathbb R)$, with potentially larger and larger support containing the origin, that are `close' to a copy of one period of $u_n(0)$. This is achieved by viewing $u_n(0)$ as a function on the circle of circumference $L_n$, locating a small subinterval where $u_n(0)$ is `not too large', and `cutting' there in a smooth fashion to `unwrap' on the line and obtain a new sequence, $q_n(0)$. 

Despite sounding deceivingly simple, this process is one of the most vexing parts of the proof. First of all, the cut must be done in a way that establishes not only that $q_n(0)$ is `close' to $u_n(0)$, but also that the evolution of $q_n(0)$ remains a good approximation to $u_n(t)$ on the time interval $[0,T]$. To that end, one needs to avoid cutting in a location where there is a bubble of concentration for $u_n(0)$. It is also crucial to cut along a sufficiently large subinterval in order to ensure that the periodic solution will not have a chance to wrap around the torus and reinforce itself. On a technical level, the main difficulties stem from the fact that we have to work in the fractional Sobolev space $\dot H^{-\frac 1 2}$ which is non-local. For instance, even for a smooth cutoff $\chi\in C_c^\infty(\mathbb R)$ it is possible that $\chi u_n(0)$ is not in $\dot H^{-\frac 12}(\mathbb R)$. 

The difficulty of locating an appropriate cutting interval informs our choice of finite-dimensional approximating systems, which differs from the preceding works we discussed. The removal of low frequencies is not an arbitrary decision, but the decisive and indispensable technical step that allows the process described above to run smoothly (see the proof of Lemma \ref{bump}). One may worry that this choice could affect the well-posedness of the induced flows or obstruct the convergence to the full equation, especially since we are working in negative regularity spaces. Nevertheless, these concerns turn out to be unwarranted, thanks to the presence of a derivative in the nonlinearity of \eqref{eq:Hk}. 
Moreover, after carrying out the delicate `cutting' and `unwrapping' process carefully, we ensure not only that $q_n(0)$ is `close' to $u_n(0)$, but also that the flow on the line with initial data $q_n(0)$ stays `close' to $u_n$.

The new sequence of initial data $q_n(0)$ is bounded in $\dot H^{-\frac 12}(\mathbb R)$, permitting us to get a weak limit. This indicates that we need to understand the behavior of the $H_\kappa$ flow in the weak topology. In particular, we prove that for a sequence of initial data that is bounded in $\dot H^{-\frac 12}(\mathbb R)$, passing to a subsequence the corrresponding solutions to the $H_\kappa$ flow converge weakly in $H^{-1}(\mathbb R)$ to a solution to the same equation uniformly on any compact time interval containing $0$. The weak limit extracted through this process is expected to be the desired witness to non-squeezing for the $H_\kappa$ flow on the line.

Establishing the non-squeezing property for the $H_\kappa$ flows constitutes an essential step towards proving our main theorem. As we discussed earlier, these flows provide a good approximation for KdV. For a bounded in $\dot H^{-\frac 1 2}(\mathbb R)$ sequence of initial data, the difference of the corresponding solutions to the KdV and the $H_\kappa$ flow converges to $0$ in $H^{-1}$ uniformly on a compact time interval $[0,T]$ and for all elements of the sequence as $\kappa\to\infty$ (see Lemma \ref{Q3}). Consequently, we can rely on this approximation to prove that KdV inherits this property.

The key ingredient to do so is prove a weak well-posendess result for \eqref{KdV} in $H^{-1}$. Indeed, we show that, if $q_n(t)$ is a bounded in $\dot H^{-\frac 12}(\mathbb R)$ sequence of Schwartz solutions to \eqref{KdV} and $q_n(0)$ converges weakly in $H^{-1}$ to $q(0)$, then $q_n(T)$ converges weakly in $H^{-1}$ to the solution to \eqref{KdV} with initial data $q(0)$ for all $T>0$. 

Understanding the behavior of \eqref{KdV} in the weak topology is an interesting question on its own, and one that has already received attention. In particular, in \cite{CK} Cui and Kenig proved a weak continuity result in the Sobolev space $H^{-\frac 34}(\mathbb R)$, but the problem in lower regularity spaces remained unanswered. These authors were motivated by potential applications of this property to finite time blow-up and asymptotic stability of solitary waves. It also becomes apparent that studying the behavior of weak limits under the flow is fundamental for symplectic non-squeezing. We would add that the weak topology better represents what one may achieve in actual experiments: One cannot wholly suppress high-frequncy noise, nor employ an infinite domain. One can only make finitely many measurements.

There are a few simplifications that are possible in the torus setting. First of all, our finite-dimensional approximation for the $H_\kappa$ flow can comprise of only a frequency truncation of the full equation, as we are working on finite volume to begin with. Moreover, the witnesses to non-squeezing we obtain by Gromov's result for these finite-dimensional models all lie in the same space. Finally, the finite volume setting allows us to avoid working in the weak topology. 

Besides providing a simpler proof for the result of \cite{CKSTT} and allowing us to deal with the infinite volume setting, our methods can take us even further. As we mentioned earlier, one of the most crucial results in our work is that the KdV flow respects weak $H^{-1}$ limits on $\dot H^{-\frac 12}$ bounded subsets. We can however get this result for a larger class of subsets of $H^{-1}$. By proving this for all bounded equicontinuous subsets of $H^{-1}$, we gain access to the following stronger results, where the center of the ball is merely in $H^{-1}$. 

\begin{theorem}
\label{sns KdV R ell}
Let $z\in H^{-1}(\mathbb R)$, $l\in H^{1} (\mathbb R)$ with $\|l\|_{\dot H^{\frac 1 2}(\mathbb R)}=1$, $\alpha\in \mathbb C$, $0<r<R<\infty$, and $T>0$. Then there exists $q_0 \in \{ q\in H^{-1}(\mathbb R) : \|q-z\|_{\dot H^{-\frac 1 2}(\mathbb R)}< R\}$ such that the solution $q$ to \eqref{KdV} with initial data $q(0)=q_{0}$ satisfies 
$$|\langle l, q(T)\rangle -\alpha|>r.$$
\end{theorem}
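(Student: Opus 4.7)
The plan is to derive Theorem~\ref{sns KdV R ell} from Theorem~\ref{sns KdV R} by approximation, exploiting the strengthened weak-continuity of the KdV flow on bounded equicontinuous subsets of $H^{-1}(\mathbb R)$ announced earlier in the introduction. To secure strict inequalities in the limit, I would first fix auxiliary parameters $r<r'<R'<R$.

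I would approximate the center by the sharp Fourier cutoff $z_k := P_{1/k\leq |\xi|\leq k}z$. Since the Fourier support of $z_k$ stays away from the origin, $z_k\in \dot H^{-\frac 12}(\mathbb R)$, while dominated convergence yields $z_k\to z$ strongly in $H^{-1}(\mathbb R)$; the tails of $\widehat{z_k}$ at $\xi\to 0$ and $|\xi|\to\infty$ are uniformly dominated by those of $\widehat z$, so $\{z_k\}$ is equicontinuous in $H^{-1}$. Applying Theorem~\ref{sns KdV R} with center $z_k$, radius $R'$, threshold $r'$, test vector $l\in H^1\subset H^{\frac 12}$, shift $\alpha$, and time $T$ yields for each $k$ a witness $q_{0,k}\in \dot H^{-\frac 12}(\mathbb R)$ satisfying
\begin{equation*}
\|q_{0,k}-z_k\|_{\dot H^{-\frac 12}(\mathbb R)}<R'\quad\text{and}\quad |\langle l, q_k(T)\rangle-\alpha|>r',
\end{equation*}
where $q_k$ denotes the KdV solution with initial data $q_{0,k}$.

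To pass to the limit I would first verify that $\{q_{0,k}\}$ is bounded and equicontinuous in $H^{-1}(\mathbb R)$: boundedness follows from the continuous embedding $\dot H^{-\frac 12}(\mathbb R)\hookrightarrow H^{-1}(\mathbb R)$, while the elementary estimates
\begin{equation*}
\|P_{>N}(q_{0,k}-z_k)\|_{H^{-1}}^2\leq \frac{(R')^2}{N},\qquad \|P_{<\varepsilon}(q_{0,k}-z_k)\|_{H^{-1}}^2\leq \varepsilon(R')^2,
\end{equation*}
combined with the equicontinuity of $\{z_k\}$, supply the required uniform smallness of the high- and low-frequency tails of $q_{0,k}$ in $H^{-1}$. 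By Banach--Alaoglu, along a subsequence $q_{0,k}-z_k\rightharpoonup w$ weakly in $\dot H^{-\frac 12}$; since $z_k\to z$ strongly in $H^{-1}$, this gives $q_{0,k}\rightharpoonup q_0:=z+w$ weakly in $H^{-1}$, $q_0-z=w\in \dot H^{-\frac 12}$, and
\begin{equation*}
\|q_0-z\|_{\dot H^{-\frac 12}(\mathbb R)}\leq \liminf_k\|q_{0,k}-z_k\|_{\dot H^{-\frac 12}(\mathbb R)}\leq R'<R.
\end{equation*}
The enhanced weak $H^{-1}$ continuity of the KdV flow then yields $q_k(T)\rightharpoonup q(T)$ weakly in $H^{-1}$, where $q$ is the KdV solution issuing from $q_0$, and pairing against $l\in H^1(\mathbb R)\simeq (H^{-1}(\mathbb R))^\ast$ delivers $|\langle l, q(T)\rangle-\alpha|\geq r'>r$, exhibiting the desired witness. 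The principal technical obstacle is checking that $\{q_{0,k}\}$ is equicontinuous in $H^{-1}$, which is precisely the hypothesis licensing the use of the stronger weak-continuity theorem; this works because the weight gain from $|\xi|^{-1}$ to $(1+\xi^2)^{-1}$ automatically converts the $\dot H^{-\frac 12}$ bound on the perturbation $q_{0,k}-z_k$ into the uniform frequency-tail smallness demanded in $H^{-1}$.
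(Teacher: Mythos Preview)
Your proposal is correct and follows essentially the same approach as the paper: approximate $z\in H^{-1}$ by a sequence $z_k\in\dot H^{-\frac12}\cap H^{-1}$ converging in $H^{-1}$, apply Theorem~\ref{sns KdV R} with slightly shrunk radii to obtain witnesses $q_{0,k}$, extract a weak $H^{-1}$ limit via the $\dot H^{-\frac12}$ bound on $q_{0,k}-z_k$, verify equicontinuity of $\{q_{0,k}\}$ by splitting it as $(q_{0,k}-z_k)+z_k$ (exactly the paper's decomposition $Q\subset Q_1+Q_2$ via Corollary~\ref{Qc}), and conclude via Theorem~\ref{KdV weak}. The only cosmetic differences are your explicit choice $z_k=P_{1/k\le|\xi|\le k}z$ and your mention of low-frequency tails, which is unnecessary since equicontinuity in $H^{-1}$ (Lemma~\ref{Q1}) concerns only high frequencies.
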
 
 
\begin{theorem}
\label{sns KdV T ell}
Let $z\in H^{-1}(\mathbb T)$, $l\in  H^{1} (\mathbb T)$ with $\|l\|_{\dot H^{\frac 1 2}(\mathbb T)}=1$, $\alpha\in \mathbb C$, $0<r<R<\infty$, and $T>0$. Then there exists $q_0 \in \{ q\in H^{-1}(\mathbb T) : \|q-z\|_{\dot H^{-\frac 1 2}(\mathbb T)}< R\}$ such that the solution $q$ to \eqref{KdV} with initial data $q(0)=q_0$ satisfies 
$$|\langle l, q(T)\rangle -\alpha|>r.$$
\end{theorem}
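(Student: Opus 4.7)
The plan is to deduce Theorem \ref{sns KdV T ell} from Theorem \ref{sns KdV T} by a density argument for the center $z$, with the weak $H^{-1}$ continuity of the KdV flow on bounded equicontinuous subsets of $H^{-1}(\mathbb T)$ serving as the essential analytic input. First, by Galilean invariance we reduce to the case that $z$ has mean zero: if $\mu=|\mathbb T|^{-1}\int z\,dx$, then writing $q(t,x)=\tilde q(t,x-6\mu t)+\mu$ makes $\tilde q$ a KdV solution with mean-zero initial data, and the claim for $q$ becomes a claim for $\tilde q$ with $l$ replaced by its translate and $\alpha$ shifted by $\mu\int l$. Fix auxiliary parameters $r<r'<R'<R$ and set $z_k:=P_{\leq N_k}z$ for some $N_k\to\infty$, where $P_{\leq N_k}$ is the sharp Fourier projection. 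Then $z_k\in C^\infty(\mathbb T)\cap \dot H^{-\frac 12}(\mathbb T)$ and $z_k\to z$ strongly in $H^{-1}(\mathbb T)$.

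Next, apply Theorem \ref{sns KdV T} with center $z_k$, radius $R'$, functional $l\in H^1\subset H^{\frac 12}$, target $\alpha$, and tolerance $r'$. This produces $q_0^{(k)}\in \dot H^{-\frac 12}(\mathbb T)$ with $\|q_0^{(k)}-z_k\|_{\dot H^{-\frac 12}}<R'$ whose KdV solution $q^{(k)}$ satisfies $|\langle l,q^{(k)}(T)\rangle-\alpha|>r'$. Setting $v^{(k)}:=q_0^{(k)}-z_k$, the sequence $\{q_0^{(k)}\}$ is bounded and equicontinuous in $H^{-1}(\mathbb T)$: the elementary bound $\|P_{>N}v\|_{H^{-1}}\leq N^{-\frac 12}\|v\|_{\dot H^{-\frac 12}}$ for mean-zero $v$ controls the tail of $v^{(k)}$ uniformly in $k$, while $\|P_{>N}z_k\|_{H^{-1}}\leq \|P_{>N}z\|_{H^{-1}}\to 0$ as $N\to\infty$ handles the $z_k$ piece.

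Passing to subsequences, Banach--Alaoglu yields weak limits $q_0^{(k)}\rightharpoonup q_0^*$ in $H^{-1}(\mathbb T)$ and $v^{(k)}\rightharpoonup v^*$ in $\dot H^{-\frac 12}(\mathbb T)$. Since $z_k\to z$ strongly in $H^{-1}$, we identify $q_0^*=z+v^*$, and weak lower semicontinuity of the norm gives $\|v^*\|_{\dot H^{-\frac 12}}\leq R'<R$, placing $q_0^*$ in the prescribed ball. The heart of the argument, and the main obstacle, is to invoke the weak well-posedness result for KdV on bounded equicontinuous subsets of $H^{-1}(\mathbb T)$---the very tool the paper develops to make these upgraded statements accessible---to conclude that $q^{(k)}(T)\rightharpoonup q^*(T)$ weakly in $H^{-1}$, where $q^*$ is the KdV evolution of $q_0^*$. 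Since $l\in H^1(\mathbb T)$ acts continuously on $H^{-1}$ in the weak topology, we deduce
\begin{align*}
|\langle l,q^*(T)\rangle-\alpha|=\lim_k|\langle l,q^{(k)}(T)\rangle-\alpha|\geq r'>r,
\end{align*}
so $q_0^*$ is the desired witness. Once weak continuity on equicontinuous sets is granted, the remaining work is the routine verification that equicontinuity, the $\dot H^{-\frac 12}$ ball condition, and the pairing with $l$ all survive the weak limiting procedure.
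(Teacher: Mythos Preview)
Your density argument has the same skeleton as the paper's proof---approximate $z$ by $\dot H^{-\frac12}$ centers, apply Theorem \ref{sns KdV T}, and pass to a limit---but you take a heavier route than necessary. On the torus the paper does \emph{not} invoke weak well-posedness; instead it uses Lemma \ref{moving centers}, which exploits the compact embedding $\dot H^{-\frac12}(\mathbb T)\hookrightarrow H^{-1}(\mathbb T)$ to extract a \emph{strong} $H^{-1}$ subsequential limit of the $q_0^{(k)}$. Once you have strong $H^{-1}$ convergence of the initial data, the ordinary continuity of the solution map (Theorem \ref{KdV wp}) gives $q^{(k)}(T)\to q^*(T)$ strongly in $H^{-1}$, and the pairing with $l\in H^1$ passes to the limit trivially. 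Your equicontinuity-plus-weak-convergence framework would ultimately yield the same strong convergence (bounded equicontinuous sets in $H^{-1}(\mathbb T)$ are precompact), so the detour through weak well-posedness---which the paper develops only for $\mathbb R$ and explicitly notes is unnecessary on $\mathbb T$---is avoidable. Your Galilean reduction to mean-zero $z$ is a sensible preliminary step that the paper's proof leaves implicit.
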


Note that, even in the compact setting, this is a new result.

Looking back at Gromov's Theorem, it asserts that a smooth symplectomorphism cannot map a ball of radius $R$ into a cylinder with cross-section $r<R$. The next natural question is to ask what happens if we consider $r=R$. Moreover, one cannot help but wonder if it is necessary to restrict our interest to cylinders with a circular cross-section. As it turns out, the following statements are equivalent to Gromov's Theorem (Remark 1.4, \cite{KVZ16}):
\begin{enumerate}
    \item A symplectomorphism cannot map a closed ball into an open cylinder of the same radius.
    \item A symplectomorphism cannot map a ball of radius $R$ into a cylinder whose cross-section has area less than $\pi R^2$.
\end{enumerate}
The robustness of our methods can also be manifested by allowing us to obtain analogous alternate formulations of symplectic non-squeezing for \eqref{KdV}, both on the circle and the line.


\subsection*{Acknowledgements} I would like to thank my advisors, Rowan Killip and Monica Visan, for suggesting this problem and for their invaluable support and guidance. 
This work was supported in part by NSF grants DMS-1600942 (Rowan Killip), DMS-1500707 and DMS-1763074 (Monica Visan).

\section{Preliminaries}

We adopt the following convention for the Fourier transform and the inverse Fourier transform:
\begin{align*}
    \hat f(\xi)&=\int_{\R} e^{-2\pi i\xi x} f(x)dx,\qquad \xi\in \mathbb R\\
    f(x)&= \int_{\R} e^{2\pi i\xi x} \hat f(\xi)d\xi
\end{align*}
for functions on the line and
\begin{align*}
    \hat f(k)&=\frac{1}{\ell} \int_0^\ell e^{-2 \pi i k x} f(x)dx, \qquad k\in \frac {1}{\ell}\mathbb Z\\
    f(x)&= \sum_{k\in \frac {1}{\ell}\mathbb Z} e^{2\pi i k x} \hat f(k)
\end{align*}
for functions on the circle $\T_\ell=\R / (\ell \mathbb Z)$. Plancherel's theorem asserts that
\begin{align*}
    \|f\|_{L^2(\mathbb R)} ^2&:= \int_{\mathbb R} |f(x)|^2 dx = \int_{\mathbb R} |\hat f(\xi)|^2 d\xi,\\
    \|f\|_{L^2(\mathbb T_\ell)} ^2&:= \int_0^\ell |f(x)|^2 dx = \ell \sum_{k\in \frac {1}{\ell}\mathbb Z} |\hat f(k)|^2.
\end{align*}
The above give rise to the definition of the $H^s$ and $\dot H^s$ Sobolev norms for $s\in\mathbb{R}\setminus\{0\}$
$$\|f\|^2_{H^s(\R)}=\int_{\R} |\hat f(\xi)|^2 (1+|\xi|^2)^s d\xi,\qquad \|f\|^2_{\dot H^s(\R)}=\int_{\R} |\hat f(\xi)|^2 |\xi|^{2s} d\xi$$
and 
$$\|f\|^2_{H^s(\T_\ell)}=\ell \sum_{k\in \frac {1}{\ell}\mathbb Z} |\hat f(k)|^2 (1+|k|^2)^s,\qquad \|f\|^2_{\dot H^s(\T_\ell)}=\ell \sum_{k\in \frac {1}{\ell}\mathbb Z\setminus \{0\}} |\hat f(k)|^2 |k|^{2s}.$$
In light of these definitions, the Sobolev spaces are given by
\begin{align*}
    H^s(\mathbb R):= \{f: \|f\|_{H^s(\mathbb R)}<\infty\}, \qquad  H^s(\mathbb T_\ell):= \{f: \|f\|_{H^s(\mathbb T_\ell)}<\infty\}
\end{align*}    
for all $s\in\mathbb R\setminus\{0\}$,
\begin{align*}
    \dot H^s(\mathbb R):= \{f: \|f\|_{\dot H^s(\mathbb R)}<\infty\}, \qquad  \dot H^s(\mathbb T_\ell):= \{f: \|f\|_{\dot H^s(\mathbb T_\ell)}<\infty\}
\end{align*}    
for $s>0$,
\begin{align*}
    \dot H^{s}(\mathbb R):= \{f: \|f\|_{\dot H^{s}(\mathbb R)}<\infty\}, \qquad
    \dot H^{s}(\mathbb T_\ell):= \{f: \hat f(0)=0,\, \|f\|_{\dot H^{s}(\mathbb T_\ell)}<\infty\}
\end{align*}
for $s<0$.

We use the standard Littlewood-Paley operators $P_{\leq N}$, $P_{>N}$, $P_{N<\dots\leq M}$ for functions on the line given by
\begin{align*}
    \widehat{ P_{\leq N}f}(\xi)&:= m(\frac{\xi}{N}) \hat f(\xi),\\
    \widehat{ P_{> N}f}(\xi)&:= \big(1-m(\frac{\xi}{N})\big) \hat f(\xi),\\
    P_{N<\dots\leq M}f &:= P_{\leq M}f-P_{\leq N}f
\end{align*}
and $P^\ell_{\leq N}$, $P^\ell_{>N}$, $P^\ell_{N<\dots\leq M}$ for functions on the circle $\mathbb T_\ell=\mathbb R/(\ell \mathbb Z)$ given by
\begin{align*}
    \widehat{ P^\ell_{\leq N}f}(k)&:= m(\frac{k}{N}) \hat f(k),\\
    \widehat{ P^\ell_{> N}f}(k)&:= \big(1-m(\frac{k}{N})\big) \hat f(k),\\
    P^\ell_{N<\dots\leq M}f &:= P^\ell_{\leq M}f-P^\ell_{\leq N}f
\end{align*}
for $N,M\in 2^\mathbb Z$, where $m\in C_c^\infty(\R)$ is a radial bump function supported in $[-2,2]$ and equal to 1 on $[-1,1]$. Like all Fourier multipliers, the Littlewood-Paley operators commute with differential operators as well as with other Fourier multipliers. They also obey the following estimates.

\begin{lemma}
[Bernstein estimates]
For $s\in\mathbb R$, $\sigma>0$, $N\in 2^{\mathbb Z}$

\begin{align*}
    \|P_{\leq N}f\|_{H^s(\mathbb R)}+\|P_{> N}f\|_{H^s(\mathbb R)}&\lesssim \|f\|_{H^s(\mathbb R)},\\
    \|P_{\leq N}f\|_{\dot H^s(\mathbb R)}+\|P_{> N}f\|_{\dot H^s(\mathbb R)}&\lesssim \|f\|_{\dot H^s(\mathbb R)},\\
    \||P_{\leq N}f\|_{\dot H^s(\mathbb R)}&\lesssim N^\sigma \|P_{\leq N}f\|_{\dot H^{s-\sigma}(\mathbb R)},\\
    \|P_{>N}f\|_{\dot H^s(\mathbb R)}&\lesssim N^{-\sigma} \| P_{> N}f\|_{\dot H^{s+\sigma}(\mathbb R)}.
\end{align*}
The analogous estimates also hold on the circle.
\end{lemma}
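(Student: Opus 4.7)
The four inequalities are all statements about the Fourier multipliers $m(\xi/N)$ and $1-m(\xi/N)$, so the plan is to prove each by a direct computation on the Fourier side, using Plancherel and the definitions of the Sobolev norms given just before the lemma. No tools beyond the support properties of $m$ are needed; the main point is simply to organize the four cases cleanly.

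For the first two estimates I will use that $0 \le m \le 1$ pointwise, so $|m(\xi/N)| \le 1$ and $|1-m(\xi/N)| \le 1$. Then
\begin{align*}
\|P_{\le N}f\|_{H^s(\R)}^2
&= \int_{\R} \bigl|m(\xi/N)\bigr|^2 |\hat f(\xi)|^2 (1+|\xi|^2)^s\,d\xi
\le \|f\|_{H^s(\R)}^2,
\end{align*}
and identically for $P_{>N}$ and for the homogeneous norms. The third inequality uses the support of the multiplier: on $\supp m(\cdot/N) \subset [-2N,2N]$ one has $|\xi|^{2\sigma} \lesssim N^{2\sigma}$, hence
\begin{align*}
\|P_{\le N}f\|_{\dot H^s(\R)}^2
= \int \bigl|m(\xi/N)\hat f(\xi)\bigr|^2 |\xi|^{2\sigma}\, |\xi|^{2(s-\sigma)}\,d\xi
\lesssim N^{2\sigma}\,\|P_{\le N}f\|_{\dot H^{s-\sigma}(\R)}^2.
\end{align*}
Symmetrically, on $\supp(1-m(\cdot/N)) \subset \{|\xi| \ge N\}$ one has $|\xi|^{-2\sigma} \le N^{-2\sigma}$, giving the fourth estimate after factoring $|\xi|^{2s} = |\xi|^{-2\sigma}|\xi|^{2(s+\sigma)}$ inside the integral.

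For the torus analogues, the only change is that integrals over $\R$ become sums $\ell\sum_{k \in \frac{1}{\ell}\mathbb Z}$, and the Plancherel identities recorded in the preliminaries are used in place of the line version; the support arguments for $m(k/N)$ and $1-m(k/N)$ transfer without modification. There is no serious obstacle here: everything reduces to pointwise comparisons of Fourier multipliers and a single application of Plancherel, and the only mild care needed is to check in the inhomogeneous case that the frequency localization does not mix the low and high regimes, which is handled by writing $(1+|\xi|^2)^s$ and estimating trivially by $|m|\le 1$.
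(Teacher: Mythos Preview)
The paper states this lemma without proof, treating the Bernstein inequalities as standard background. Your argument is the canonical one---pointwise bounds on the multipliers combined with Plancherel and the support properties of $m$---and it is correct as written, so there is nothing to compare.
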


\subsection{Diagonal Green's function} \label{sec;green}

In this section we summarize some properties of the Green's function associated with the Schr\"odinger operator
\begin{align*}
    L=-\partial_x^2 +q
\end{align*}
both on the line and on the circle. We follow the exposition of \cite{KV18}, where proofs of the statements in this section can be found. 

We begin with the line setting. For $\delta>0$ small, we denote
\begin{align*}
    B_\delta:= \{q\in H^{-1}(\mathbb R): \|q\|_{H^{-1}(\mathbb R)}\leq\delta\}.
\end{align*}

\begin{proposition}
Given $q \in H^{-1}(\mathbb R)$, there exists a unique self-adjoint operator $L$ associated to the quadratic form 
\begin{align*}
    \psi \mapsto \int |\psi '(x)|^2 +q(x) |\psi(x)|^2 dx
\end{align*}
with domain $H^1(\mathbb R)$. It is semi-bounded. Moreover, for $\delta \leq \frac 12$ and $q \in B_\delta$, the resolvent is given by the norm-convergent series
\begin{align*}
   R(\kappa):= (L+\kappa^2)^{-1} = \sum_{l=0}^\infty (-1)^l \sqrt{R_0} (\sqrt{R_0} q \sqrt{R_0})^l \sqrt{R_0}
\end{align*}
for all $\kappa \geq 1$, where $R_0$ denotes the resolvent in the case $q=0$
\begin{align*}
    R_0(\kappa)=(-\partial_x^2 +\kappa^2)^{-1}.
\end{align*}
\end{proposition}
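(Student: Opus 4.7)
The plan is to invoke the KLMN theorem to construct $L$ from its quadratic form, and then to obtain the resolvent as a norm-convergent Born (Neumann) series once $\sqrt{R_0(\kappa)}\,q\,\sqrt{R_0(\kappa)}$ is shown to be a bounded operator on $L^2$ of strictly contractive norm. Both steps hinge on the single estimate
\begin{equation*}
    \bigl\| \sqrt{R_0(\kappa)}\, q\, \sqrt{R_0(\kappa)} \bigr\|_{L^2\to L^2} \lesssim \kappa^{-1/2}\, \|q\|_{H^{-1}(\R)}, \qquad \kappa \geq 1, \tag{$\ast$}
\end{equation*}
which I would prove by duality. For $\phi \in L^2$, the quantity $\langle \sqrt{R_0}\,q\,\sqrt{R_0}\,\phi, \phi\rangle$ equals the $H^{-1}$--$H^1$ pairing $\langle q, |f|^2\rangle$ with $f = \sqrt{R_0(\kappa)}\phi$. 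The elementary bounds $\|f\|_{L^2} \leq \kappa^{-1}\|\phi\|_{L^2}$, $\|f'\|_{L^2}\leq \|\phi\|_{L^2}$, combined with the one-dimensional Sobolev embedding $\|f\|_{L^\infty}^2 \lesssim \|f\|_{L^2}\|f'\|_{L^2}$ and the Leibniz rule $(|f|^2)' = 2\,\mathrm{Re}(\bar f f')$, yield $\||f|^2\|_{H^1(\R)} \lesssim \kappa^{-1/2}\|\phi\|_{L^2}^2$, whence $(\ast)$.

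Granted $(\ast)$, the form $\mathfrak{h}(\psi):=\int |\psi'|^2+q|\psi|^2\,dx$ is well-defined on $H^1(\R)$, and substituting $\phi = \sqrt{-\partial_x^2+\kappa^2}\,\psi$ gives
$$\Bigl|\int q\,|\psi|^2 \,dx\Bigr| \leq \bigl\|\sqrt{R_0}\,q\,\sqrt{R_0}\bigr\|_{L^2\to L^2}\bigl(\|\psi'\|_{L^2}^2+\kappa^2\|\psi\|_{L^2}^2\bigr).$$
By $(\ast)$ the coefficient of $\|\psi'\|_{L^2}^2$ can be made arbitrarily small by choosing $\kappa$ large, so $\int q|\psi|^2\,dx$ is infinitesimally form-small with respect to $\int|\psi'|^2\,dx$. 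The KLMN theorem then delivers the unique semi-bounded self-adjoint operator $L$ whose associated sesquilinear form extends $\mathfrak{h}$ on $H^1(\R)$.

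For the Born series, by tracking the implicit constant in $(\ast)$ one checks that the hypotheses $q\in B_\delta$ with $\delta\leq \tfrac12$ and $\kappa\geq 1$ imply $\|\sqrt{R_0}\,q\,\sqrt{R_0}\|_{L^2\to L^2}<1$, so the Neumann series
$$(I+\sqrt{R_0}\,q\,\sqrt{R_0})^{-1}=\sum_{l=0}^\infty (-1)^l (\sqrt{R_0}\,q\,\sqrt{R_0})^l$$
converges in operator norm. I would conclude by verifying at the level of quadratic forms that $u:=\sqrt{R_0}(I+\sqrt{R_0}\,q\,\sqrt{R_0})^{-1}\sqrt{R_0}\,f$ solves $(\mathfrak{h}+\kappa^2)(u,\psi)=\langle f,\psi\rangle$ for every $\psi\in H^1$ and $f\in L^2$, which characterizes $u = (L+\kappa^2)^{-1}f$; distributing the inverse across the Neumann series then yields the stated factorization of $R(\kappa)$. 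The main obstacle, in my view, is tracking the constants in the Leibniz-plus-Sobolev computation closely enough that the stated hypotheses ($\delta\leq\tfrac12$ and $\kappa\geq 1$, rather than merely $\kappa$ large depending on $\|q\|_{H^{-1}}$) already suffice for operator-norm convergence of the series.
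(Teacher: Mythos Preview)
Your argument is sound, and the KLMN-plus-Neumann-series strategy is exactly the right architecture. The paper does not prove this proposition itself but defers to \cite{KV18}, noting only that the crucial input is the estimate
\[
\|\sqrt{R_0}\,q\,\sqrt{R_0}\|_{\mathrm{op}}^2 \leq \|\sqrt{R_0}\,q\,\sqrt{R_0}\|_{\mathfrak I_2}^2 = \frac{1}{\kappa}\int \frac{|\hat q(\xi)|^2}{\xi^2+4\kappa^2}\,d\xi.
\]
So the route taken there differs from yours at the key step: rather than bounding the quadratic form via Sobolev embedding and the Leibniz rule, one computes the Hilbert--Schmidt norm of $\sqrt{R_0}\,q\,\sqrt{R_0}$ exactly on the Fourier side (it has an explicit integral kernel, and $\|\cdot\|_{\mathfrak I_2}^2$ is just the $L^2$ norm of that kernel).

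This buys two things your approach does not. First, the constant is explicit: for $\kappa\geq 1$ one reads off directly that $\|\sqrt{R_0}\,q\,\sqrt{R_0}\|_{\mathrm{op}}\leq \kappa^{-1/2}\|q\|_{H^{-1}}\leq \delta$, so $\delta\leq\tfrac12$ immediately gives a contraction---this dissolves the obstacle you flagged about tracking constants through the Sobolev--Leibniz computation. Second, and more importantly for the sequel, it shows $\sqrt{R_0}\,q\,\sqrt{R_0}\in\mathfrak I_2$, not merely bounded. That Hilbert--Schmidt membership is what makes the perturbation determinant $\det(1+\sqrt{R_0}\,q\,\sqrt{R_0})$ and the trace expansions for $\alpha(\kappa;q)$ and $g(x;\kappa;q)$ well-defined later in the paper, so the stronger estimate is not incidental.
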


It is worth noting that in the heart of the proof of the above Proposition is the crucial estimate
\begin{align} \label{op norm}
    \|\sqrt{R_0}q\sqrt{R_0}\|_{\text{op}}^2\leq \|\sqrt{R_0}q\sqrt{R_0}\|_{\mathfrak I_2}^2 = \frac{1}{\kappa} \int \frac{|\hat q(\xi)|^2}{\xi^2+4\kappa^2}d\xi.
\end{align}

\begin{proposition}[Diffeomorphism property] \label{diffeo R}
There exists $\delta > 0$ so that the following are true for all $\kappa \geq 1$: For each $q \in B_\delta$, the resolvent $R$ admits a continuous integral kernel $G(x, y; \kappa; q)$; thus, we may unambiguously define the diagonal Green's function
\begin{align*}
    g(x; \kappa; q) := G(x, x; \kappa; q),
\end{align*}
which is given by the uniformly convergent series
\begin{align}\label{series}
     g(x;\kappa; q)= \frac{1}{2\kappa} +\sum_{l=1}^\infty (-1)^l \left\langle \sqrt{R_0} \delta_x, (\sqrt{R_0} q \sqrt{R_0})^l \sqrt{R_0} \delta_x\right\rangle
\end{align}
where inner products are taken in $L^2(\mathbb R)$.
Moreover, the mapping
\begin{align*}
    q\mapsto g-\frac{1}{2\kappa}
\end{align*}
is a (real analytic) diffeomorphism of $B_\delta$ into $H^1 (\mathbb R)$. In particular, for all $q, \tilde q \in B_\delta$
\begin{align}
    \|g(q)-g(\tilde q)\|_{H^1(\mathbb R)}&\lesssim \|q-\tilde q\|_{H^{-1}(\mathbb R)},\\
    \left\|g(q)-\frac{1}{2\kappa}\right\|_{H^1(\mathbb R)}&\lesssim \|q\|_{H^{-1}(\mathbb R)}.
\end{align}
The implicit constants do not depend on $\kappa$.
\end{proposition}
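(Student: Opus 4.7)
The plan is to base everything on the Neumann series for the resolvent provided by the previous proposition. Testing that series against $\delta_x$ on both sides and using the explicit kernel $R_0(x,y;\kappa)=\frac{1}{2\kappa}e^{-\kappa|x-y|}$ gives $\|\sqrt{R_0}\delta_x\|_{L^2}^2=\langle\delta_x,R_0\delta_x\rangle=\frac{1}{2\kappa}$, a quantity independent of $x$. Together with the operator-norm bound \eqref{op norm}, which yields $\|\sqrt{R_0}q\sqrt{R_0}\|_{\mathrm{op}}\lesssim \|q\|_{H^{-1}}/\sqrt{\kappa}$, this shows that each term in the formal series for $g(x;\kappa;q)$ is uniformly bounded in $x$ and that the series converges geometrically once $\delta$ is small. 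This simultaneously proves continuity of the integral kernel on the diagonal and establishes the expansion \eqref{series}.

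The heart of the argument is the $H^1$ estimate on the tail. Writing the $l$-th term as $g_l(x)=(-1)^l\langle\sqrt{R_0}\delta_x,(\sqrt{R_0}q\sqrt{R_0})^l\sqrt{R_0}\delta_x\rangle$, I would first compute $g_1$ explicitly: it is the convolution of $q$ with $-\tfrac{1}{4\kappa^2}e^{-2\kappa|\cdot|}$, whose Fourier transform is comparable to $[\kappa(\xi^2+\kappa^2)]^{-1}$. Splitting the Fourier integral into $\{|\xi|\leq\kappa\}$ and $\{|\xi|>\kappa\}$ gives $\|g_1\|_{H^1}\lesssim\|q\|_{H^{-1}}$ uniformly for $\kappa\geq 1$. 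For $l\geq 2$, I would peel off one outer layer, rewriting $g_l(x)=(-1)^l\langle\sqrt{R_0}\delta_x,\sqrt{R_0}q\sqrt{R_0}\cdot A^{l-1}\sqrt{R_0}\delta_x\rangle$ with $A=\sqrt{R_0}q\sqrt{R_0}$; the two outer resolvent factors together with the single $q$ are used for $H^1$ smoothing exactly as in the $l=1$ case, while $A^{l-1}$ is controlled in operator norm via \eqref{op norm}. This yields $\|g_l\|_{H^1}\lesssim\|q\|_{H^{-1}}(C\delta)^{l-1}$; summing over $l\geq 1$ gives the $H^1$ bound once $\delta$ is small.

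For the Lipschitz-type difference estimate, I would expand $g_l(q)-g_l(\tilde q)$ telescopically as a sum over $j\in\{0,\dots,l-1\}$ of terms in which a single factor $\sqrt{R_0}(q-\tilde q)\sqrt{R_0}$ is sandwiched between $j$ copies of $\sqrt{R_0}q\sqrt{R_0}$ and $l-1-j$ copies of $\sqrt{R_0}\tilde q\sqrt{R_0}$. The distinguished difference factor plays the role of the $l=1$ term in the $H^1$ analysis above, while the remaining products are controlled in operator norm. Summing over $j$ and $l$ converges geometrically for $\delta$ small and yields the claimed bound on $\|g(q)-g(\tilde q)\|_{H^1}$.

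Real analyticity of the map $q\mapsto g-\tfrac{1}{2\kappa}$ into $H^1(\mathbb{R})$ follows immediately from the absolute convergence of the power series in $q$ combined with the uniform $H^1$ bounds above. The Fr\'echet derivative at $q=0$ is the bounded linear operator $q\mapsto -\tfrac{1}{4\kappa^2}e^{-2\kappa|\cdot|}*q$, a Fourier multiplier comparable to $[\kappa(\xi^2+\kappa^2)]^{-1}$ and therefore a topological isomorphism from $H^{-1}(\mathbb{R})$ onto its image in $H^1(\mathbb{R})$. The analytic inverse function theorem then produces, after possibly shrinking $\delta$, a real analytic inverse on the image, giving the diffeomorphism claim. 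The main obstacle is the $H^1$ step: one must arrange the series terms so that the two outer resolvent factors are used for smoothing rather than wasted on operator-norm estimates, while still extracting enough decay from the middle factors to sum uniformly in $\kappa\geq 1$.
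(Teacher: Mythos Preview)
The paper does not give its own proof of this proposition; the opening paragraph of Section~\ref{sec;green} explicitly states that proofs of all statements in that subsection can be found in \cite{KV18}. So there is no in-paper argument to compare your proposal against.

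Your sketch is broadly in the spirit of the proof in \cite{KV18}, but the treatment of the $H^1$ estimate for $l\geq 2$ differs and, as written, contains a gap. In \cite{KV18} the $H^1$ norm of each $g_l$ is obtained by duality: for $f\in H^{-1}$ one computes $\int f(x)g_l(x)\,dx=(-1)^l\tr\{\sqrt{R_0}f\sqrt{R_0}\,A^l\}$ and then uses Hilbert--Schmidt bounds (two factors in $\mathfrak I_2$, the rest in operator norm via \eqref{op norm}). Your approach instead tries to argue that ``the two outer resolvent factors together with the single $q$ are used for $H^1$ smoothing exactly as in the $l=1$ case.'' This is not quite right: in $g_1$ the two copies of $R_0\delta_x$ sandwich a \emph{single} $q(y)$ at the \emph{same} point $y$, producing a genuine convolution; for $l\geq 2$ the outer factors are $R_0(x,y_1)$ and $R_0(y_l,x)$ with $y_1\neq y_l$ in general, so the structure is a bilinear form in $R_0\delta_x$ rather than a convolution, and the $H^1$ bound does not follow ``exactly as in the $l=1$ case.'' The duality/trace route avoids this issue cleanly. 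The remaining parts of your sketch (telescoping for the Lipschitz estimate, analyticity, and the inverse function theorem applied to the explicit linearization at $q=0$) are correct and match the argument in \cite{KV18}.
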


On the circle $\mathbb T_\ell=\mathbb R/(\ell\mathbb Z)$, following the exposition of \cite{KV18}, we choose to consider the Lax operator $L= -\partial_x^2 +q$ acting on $L^2(\mathbb R)$ with periodic coefficients rather than on $L^2(\mathbb T_\ell)$. Here we impose working with potentials in the small ball
\begin{align*}
    B_{\delta,\kappa}^\ell:= \{q\in H^{-1}(\mathbb T_\ell): \kappa^{-\frac 12}\|q\|_{H^{-1}(\mathbb T_\ell)}\leq\delta\}
\end{align*}
for $\delta>0$ small.

\begin{proposition}
\label{diffeo T}
Given $q \in H^{-1}(\mathbb T_\ell)$, there is a unique self-adjoint operator $L$ acting on $L^2(\mathbb R)$ associated to the semi-bounded quadratic form
\begin{align*}
    \psi \mapsto \int |\psi '(x)|^2 +q(x) |\psi(x)|^2 dx.
\end{align*}
There exists $\delta > 0$, so that for all $\kappa\geq 1$ the following are true: For each $q\in B_{\delta,\kappa}^\ell$ the resolvent $R := (L + \kappa^2)^{-1}$ admits a continuous integral kernel $G(x, y; \kappa; q)$ given by the uniformly convergent series
\begin{align*}
    G(x, y; \kappa; q) = \frac{1}{2\kappa} e^{-\kappa|x-y|} +\sum_{l=1}^{\infty} (-1)^l \left\langle \sqrt{R_0} \delta_x, (\sqrt{R_0} q \sqrt{R_0})^l \sqrt{R_0} \delta_y \right\rangle.
\end{align*}
Thus, we may unambiguously define the diagonal Green's function
\begin{align*}
    g(x; \kappa; q) := G(x, x; \kappa; q).
\end{align*}
Moreover, the mapping
\begin{align*}
    q\mapsto g-\frac{1}{2\kappa}
\end{align*}
is a (real analytic) diffeomorphism of $B_{\delta,\kappa}^\ell$ into $H^1 (\mathbb T_\ell)$. In particular, for all $q, \tilde q \in B_{\delta,\kappa}^\ell$
\begin{align}
    \|g(q)-g(\tilde q)\|_{H^1(\mathbb T_\ell)}&\lesssim \|q-\tilde q\|_{H^{-1}(\mathbb T_\ell)},\\
    \|g(q)\|_{H^1(\mathbb T_\ell)}&\lesssim \|q\|_{H^{-1}(\mathbb T_\ell)}.
\end{align}
The implicit constants do not depend on $\kappa$.
\end{proposition}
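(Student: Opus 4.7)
The plan is to parallel the line argument of Proposition~\ref{diffeo R}, adjusting for the fact that $L$ acts on $L^2(\mathbb R)$ with an $\ell$-periodic potential $q$. First I would establish the analogue of the key estimate \eqref{op norm}, namely an operator-norm bound of the form
$$\|\sqrt{R_0(\kappa)}\,q\,\sqrt{R_0(\kappa)}\|_{\text{op}} \lesssim \kappa^{-1/2}\|q\|_{H^{-1}(\mathbb T_\ell)}$$
with an implicit constant independent of both $\kappa \geq 1$ and $\ell$. Since a periodic $q$ is not in $H^{-1}(\mathbb R)$, the Hilbert--Schmidt identity cannot be invoked verbatim; instead I would either perform a Floquet/Bloch decomposition so that the operator becomes a direct integral over the quasi-momentum of operators on $L^2(\mathbb T_\ell)$, or run a Schur-test argument in which the exponential decay of the $\sqrt{R_0(\kappa)}$ kernel (proportional to $K_0(\kappa|x-y|)$) allows one to recover $\|q\|_{H^{-1}(\mathbb T_\ell)}$ despite the global periodicity. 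The smallness condition $\kappa^{-1/2}\|q\|_{H^{-1}(\mathbb T_\ell)} \leq \delta$ built into $B_{\delta,\kappa}^\ell$ then yields $\|\sqrt{R_0}q\sqrt{R_0}\|_{\text{op}} \leq C\delta < 1$, and hence convergence in operator norm of the Neumann series for $R(\kappa)=(L+\kappa^2)^{-1}$.

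Next I would upgrade the Neumann series to a series for the integral kernel. Each summand has a continuous kernel given by $\langle \sqrt{R_0}\delta_x, (\sqrt{R_0}q\sqrt{R_0})^l \sqrt{R_0}\delta_y\rangle$, since $\sqrt{R_0(\kappa)}\delta_x$ lies in $L^2(\mathbb R)$ (its values are proportional to $K_0(\kappa|x-\cdot|)$). Uniform convergence in $(x,y)$, and hence continuity of $G(x,y;\kappa;q)$, follows by combining the geometric factor $(C\delta)^l$ with the fundamental identity $\|\sqrt{R_0}\delta_x\|_{L^2}^2 = G_0(x,x;\kappa) = \frac{1}{2\kappa}$. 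Separating out the $l=0$ contribution, whose diagonal value is $\frac{1}{2\kappa}$, and then restricting to $y=x$ produces the stated formula for $g(x;\kappa;q)$.

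For the diffeomorphism claim I would focus on the linearization of $q \mapsto g - \frac{1}{2\kappa}$ at $q=0$, which picks out the first non-trivial ($l=1$) summand; up to sign this is the convolution of $q$ against the function $y\mapsto |K_0(\kappa y)|^2$ and so an explicit Fourier multiplier of order $-2$. Combined with the $\kappa^{-1/2}$ normalization built into $B_{\delta,\kappa}^\ell$, this linearization defines a topological isomorphism from $H^{-1}(\mathbb T_\ell)$ onto $H^1(\mathbb T_\ell)$ with constants independent of $\kappa$. The higher-order terms are controlled in $H^1(\mathbb T_\ell)$ by iterating the operator-norm bound together with a kernel-regularity estimate expressing that $y \mapsto \sqrt{R_0}\delta_y$ depends smoothly on $y$, harvesting a factor $(C\delta)^{l-1}$ from the subordinate tail. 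This yields both the Lipschitz bound
$$\|g(q) - g(\tilde q)\|_{H^1(\mathbb T_\ell)} \lesssim \|q - \tilde q\|_{H^{-1}(\mathbb T_\ell)}$$
and the absolute estimate on $g(q)$. Real analyticity is then immediate from the absolute convergence of the power series, and an application of the inverse function theorem, after possibly shrinking $\delta$, upgrades this to a local diffeomorphism.

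The principal obstacle will be the first step. The identity \eqref{op norm} does not apply verbatim since $\ell$-periodicity of $q$ forces $\sqrt{R_0(\kappa)}q\sqrt{R_0(\kappa)}$ to commute with translation by $\ell$ and so to have infinite Hilbert--Schmidt norm on $L^2(\mathbb R)$. Replacing the global Hilbert--Schmidt trick with either a Floquet decomposition or a Schur bound, while extracting a constant that is independent of both $\kappa$ and $\ell$, is the technical heart of the proposition, and is what dictates the $\kappa^{-1/2}$ normalization built into the definition of $B_{\delta,\kappa}^\ell$.
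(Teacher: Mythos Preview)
The paper does not actually prove this proposition: at the start of Section~\ref{sec;green} it states that the results of that section are summarized from \cite{KV18}, ``where proofs of the statements in this section can be found.'' Proposition~\ref{diffeo T} is therefore quoted without proof, so there is no in-paper argument to compare your proposal against.

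That said, your outline is a reasonable reconstruction of the standard argument and correctly identifies the one genuine new issue in passing from Proposition~\ref{diffeo R} to the periodic setting: the Hilbert--Schmidt identity \eqref{op norm} fails verbatim because a nonzero $\ell$-periodic potential makes $\sqrt{R_0}\,q\,\sqrt{R_0}$ translation-invariant by $\ell$ and hence not Hilbert--Schmidt on $L^2(\mathbb R)$. Your two suggested remedies (Floquet/Bloch decomposition, or a Schur-type bound exploiting the exponential decay of the $\sqrt{R_0}$ kernel) are both viable routes to the replacement bound $\|\sqrt{R_0}q\sqrt{R_0}\|_{\text{op}}\lesssim \kappa^{-1/2}\|q\|_{H^{-1}(\mathbb T_\ell)}$, and this is precisely what motivates the $\kappa^{-1/2}$ normalization in the definition of $B_{\delta,\kappa}^\ell$. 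The remainder of your sketch (Neumann series, kernel continuity via $\sqrt{R_0}\delta_x\in L^2$, linearization at $q=0$ as an order $-2$ multiplier, inverse function theorem) tracks the line case closely and is the expected argument. If you want to check your outline against the original source, consult \cite{KV18}.
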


Comparing the series expansions of $\alpha(\kappa;q)$ and $g(\kappa;q)$ one can observe that
\begin{align*}
    \frac{\delta \alpha}{\delta q}= \frac {1}{2\kappa}- g(q).
\end{align*}
This allows us to write down the Hamiltonian evolutions arising from the Hamiltonians $H_\kappa$ and $H_\kappa^n$ as 
\begin{equation} \label{eq:Hk} \tag{$H_\kappa$}
    \frac{d}{dt} q = 4\kappa^2 q'+ 16\kappa^5 g'(q),
\end{equation}
\begin{equation} \tag{$H_\kappa^{n}$} \label{eq:Hkn}
    \frac{d}{dt} q = 4\kappa^2 q'+ 16\kappa^5 P^{L_n}_{m_n<\dots \leq M_n}g'(P^{L_n}_{m_n<\dots \leq M_n} q)
\end{equation}
respectively.

\subsection{Equicontinuity}

\begin{definition}
A subset $Q$ of $H^{-1}(\mathbb R)$ is \textit{equicontinuous} if $q(x+h)\to q(x)$ in $H^{-1}$ as $h\to 0$, uniformly for all $q\in Q$.
\end{definition}

\begin{lemma} \label{Q1}
A bounded subset $Q\subset H^{-1}(\mathbb R)$ is equicontinuous in $ H^{-1}(\mathbb R)$ if and only if 
\begin{align}
    \label{Q eqcts}
    \int_{|\xi|\geq \lambda} \frac{1}{1+\xi^2}|\hat{q}(\xi)|^2 d\xi \to 0 \qquad\text{as}\,\,\lambda\to\infty,\,\,\text{uniformly for}\,\,q\in Q.
\end{align}
\end{lemma}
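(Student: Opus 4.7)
The plan is to translate everything to the Fourier side and then run a standard low/high-frequency split in one direction, and an averaging argument in the other. The key identity is
\begin{align*}
\|q(\cdot+h)-q\|_{H^{-1}(\mathbb R)}^2 = \int_{\mathbb R} \frac{|e^{2\pi i \xi h}-1|^2}{1+\xi^2}|\hat q(\xi)|^2\,d\xi = \int_{\mathbb R} \frac{4\sin^2(\pi\xi h)}{1+\xi^2}|\hat q(\xi)|^2\,d\xi,
\end{align*}
so the problem reduces to comparing this quantity (uniformly in $q\in Q$) to the tail integral in \eqref{Q eqcts}.

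For the implication (\eqref{Q eqcts}$\Rightarrow$ equicontinuity), I would fix $\varepsilon>0$, use the hypothesis to pick $\lambda$ so that $\sup_{q\in Q}\int_{|\xi|\ge\lambda}(1+\xi^2)^{-1}|\hat q(\xi)|^2\,d\xi<\varepsilon$, and split the integral above at $|\xi|=\lambda$. On the high-frequency piece, I use $|e^{2\pi i\xi h}-1|^2\le 4$ to dominate it by $4\varepsilon$. On the low-frequency piece, I use $|e^{2\pi i\xi h}-1|\le 2\pi|\xi h|\le 2\pi\lambda|h|$ to control it by $4\pi^2\lambda^2 h^2\|q\|_{H^{-1}}^2$, which is small uniformly in $q\in Q$ (since $Q$ is bounded) once $|h|$ is taken small enough in terms of $\lambda$.

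For the converse (equicontinuity $\Rightarrow$ \eqref{Q eqcts}), the natural move is to average the equicontinuity bound over a small interval of shifts. A direct computation gives
\begin{align*}
\frac{1}{H}\int_0^H 4\sin^2(\pi\xi h)\,dh = 2-\frac{\sin(2\pi\xi H)}{\pi\xi H},
\end{align*}
which is bounded below by an absolute positive constant $c>0$ whenever $|\xi|H\ge 1$. Fubini therefore yields
\begin{align*}
c\int_{|\xi|\ge 1/H}\frac{|\hat q(\xi)|^2}{1+\xi^2}\,d\xi \le \frac{1}{H}\int_0^H \|q(\cdot+h)-q\|_{H^{-1}}^2\,dh \le \sup_{|h|\le H}\|q(\cdot+h)-q\|_{H^{-1}}^2,
\end{align*}
so uniform smallness of the right-hand side as $H\to 0$ gives uniform smallness of the left-hand side, which is \eqref{Q eqcts} with $\lambda=1/H$.

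I expect the harder direction to be the converse, essentially because one needs the averaging trick to convert the pointwise-in-$h$ equicontinuity hypothesis into a statement about the full tail of $\hat q$; without some such lower bound on the averaged multiplier, one only sees the Fourier mass on a discrete set of frequencies. Once the averaging identity above is in place, however, both directions are short.
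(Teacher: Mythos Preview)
The paper states this lemma without proof (it is a standard characterization of equicontinuity on the Fourier side), so there is no proof in the paper to compare against. Your argument is correct in both directions: the low/high-frequency split for the forward implication is the standard one, and the averaging trick for the converse is clean and yields exactly the uniform tail decay. One minor remark: you could also obtain the converse without averaging by observing directly that $4\sin^2(\pi\xi h)\ge c>0$ on the set $\{|\xi|\ge\lambda\}\setminus E_h$, where $E_h$ is a union of short intervals around the zeros of $\sin(\pi\xi h)$, and then varying $h$ slightly; but your averaging argument is more efficient and avoids this bookkeeping.
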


\begin{corollary} \label{Qc}
1. A bounded subset $Q\subset \dot H^{-\frac 1 2}(\mathbb R)$ is equicontinuous in $ H^{-1}(\mathbb R)$.\\
2. Let $q_n\in H^{-1}(\mathbb R)$, $n\in \mathbb N$, $q\in H^{-1}(\mathbb R)$ such that $q_n\to q$ in $H^{-1}(\mathbb R)$ as $n\to \infty$. Then $Q=\{q_n: n\in \mathbb N\}$ is equicontinuous in $H^{-1}(\mathbb R)$.\\
3. If $Q_1$, $Q_2$ are equicontinuous in $H^{-1}(\mathbb R)$ and $Q\subset Q_1+Q_2$, then $Q$ is also equicontinuous in $H^{-1}(\mathbb R)$.\\
4. A subset $Q\subset \{q\in H^{-1}(\mathbb R): \|q-z\|_{\dot H^{-\frac 1 2}(\mathbb R)}\leq A\}$ for some $z\in H^{-1}(\mathbb R)$, $A>0$ is equicontinuous in $ H^{-1}(\mathbb R)$.
\end{corollary}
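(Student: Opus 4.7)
My plan is to derive each of the four statements directly from Lemma \ref{Q1}, which characterizes equicontinuity of a bounded subset of $H^{-1}(\mathbb R)$ as the uniform tail decay $\int_{|\xi|\geq \lambda} |\hat q(\xi)|^2/(1+\xi^2)\,d\xi \to 0$ as $\lambda \to \infty$. For part (1), if $Q$ is bounded by some constant $C$ in $\dot H^{-\frac 1 2}(\mathbb R)$, then for $\lambda \geq 1$ I would estimate
\begin{align*}
\int_{|\xi|\geq \lambda} \frac{|\hat q(\xi)|^2}{1+\xi^2}\,d\xi \leq \frac{1}{\lambda}\int_{|\xi|\geq \lambda} \frac{|\hat q(\xi)|^2}{|\xi|}\,d\xi \leq \frac{C^2}{\lambda}
\end{align*}
uniformly in $q\in Q$, from which the conclusion is immediate.

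For part (2), a single $q \in H^{-1}(\mathbb R)$ satisfies the tail condition of Lemma \ref{Q1} by the dominated convergence theorem applied to the integrable function $|\hat q(\xi)|^2/(1+\xi^2)$. To upgrade this to uniform decay along the sequence, I would fix $\varepsilon > 0$, use the convergence $q_n \to q$ in $H^{-1}(\mathbb R)$ to select $N$ with $\|q_n - q\|_{H^{-1}(\mathbb R)} < \varepsilon$ for $n > N$, and then take $\lambda$ large enough that the tails of $\hat q, \hat q_1, \dots, \hat q_N$ in $L^2(d\xi/(1+\xi^2))$ are each smaller than $\varepsilon$. For the remaining $n > N$, the triangle inequality in $L^2(d\xi/(1+\xi^2))$ bounds the tail of $\hat q_n$ by the tail of $\hat q$ plus $\|q_n - q\|_{H^{-1}(\mathbb R)}$, which produces the required uniform bound.

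For part (3), the triangle inequality together with the translation invariance of the $H^{-1}$ norm yields
\begin{align*}
\|q(\cdot + h) - q\|_{H^{-1}(\mathbb R)} \leq \|q_1(\cdot + h) - q_1\|_{H^{-1}(\mathbb R)} + \|q_2(\cdot + h) - q_2\|_{H^{-1}(\mathbb R)}
\end{align*}
for any decomposition $q = q_1 + q_2$ with $q_i \in Q_i$, and each summand vanishes uniformly as $h \to 0$ by assumption. Part (4) then follows by writing $q = z + (q - z)$: the set $\{q - z : q \in Q\}$ is bounded in $\dot H^{-\frac 1 2}(\mathbb R)$ and hence equicontinuous by (1), the singleton $\{z\}$ is equicontinuous by (2), and (3) combines them. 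I do not expect any genuine obstacle — all four parts reduce quickly to Lemma \ref{Q1} — the only mild subtlety being the finite-versus-tail split that makes the conclusion uniform in (2).
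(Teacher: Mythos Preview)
Your proposal is correct and follows essentially the same approach as the paper. The only minor variation is in part (3): the paper applies the tail-integral criterion of Lemma \ref{Q1} to each summand, whereas you argue directly from the definition of equicontinuity via translations; both are immediate and interchangeable here.
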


\begin{proof}
1. Suppose $Q\subset \{q\in H^{-1}(\mathbb R): \|q\|_{\dot H^{-\frac 1 2}(\mathbb R)}\leq A\}$.
An easy computation shows that
\begin{align*}
    \int_{|\xi|\geq \lambda} \frac{1}{1+\xi^2}|\hat{q}(\xi)|^2 d\xi\leq \frac{1}{\lambda} \|q\|_{\dot H^{-\frac 12}(\mathbb R)}^2\leq \frac{1}{\lambda} A^2,
\end{align*}
hence \eqref{Q eqcts} holds.

2. We have that $q_n\to q$ in $H^{-1}(\mathbb R)$ as $n\to\infty$. Let $\varepsilon>0$. We can find $\lambda_n>0$, $n\in\mathbb N$, and $\lambda_0>0$ so that
\begin{align*}
    \int_{|\xi|\geq \lambda} \frac{1}{1+\xi^2}|\hat{q}_n(\xi)|^2 d\xi &<\varepsilon\quad\text{for all}\,\, \lambda\geq \lambda_n,\\
    \int_{|\xi|\geq \lambda} \frac{1}{1+\xi^2}|\hat{q}(\xi)|^2 d\xi &<\varepsilon\quad\text{for all}\,\, \lambda\geq \lambda_0.
\end{align*}
Moreover, there exists $n_0\in \mathbb N$ such that $\|q_n-q\|_{H^{-1}(\mathbb R)}^2<\varepsilon$ for all $n\geq n_0$. Hence for all $n\geq n_0$ we get that
\begin{align*}
    \int_{|\xi|\geq \lambda} \frac{1}{1+\xi^2}|\hat{q}_n(\xi)|^2 d\xi &\lesssim \int_{|\xi|\geq \lambda} \frac{1}{1+\xi^2}|\hat{q}(\xi)|^2 d\xi + \int_{|\xi|\geq \lambda} \frac{1}{1+\xi^2}|\widehat{q_n-q}(\xi)|^2 d\xi \\
    &\lesssim \int_{|\xi|\geq \lambda} \frac{1}{1+\xi^2}|\hat{q}(\xi)|^2 d\xi +\varepsilon\\
    &\lesssim \varepsilon
\end{align*}
for $\lambda>\lambda_0$. We conclude that for $\lambda>\max\{\lambda_0, \lambda_1, \dots, \lambda_{n_0}\}$ 
\begin{align*}
    \int_{|\xi|\geq \lambda} \frac{1}{1+\xi^2}|\hat{q}_n(\xi)|^2 d\xi \lesssim \varepsilon.
\end{align*}

3. Let $\varepsilon>0$. There exist $\lambda_1, \lambda_2>0$ so that
\begin{align*}
    \int_{|\xi|\geq \lambda} \frac{1}{1+\xi^2}|\hat{q}(\xi)|^2 d\xi &<\varepsilon\quad\text{for all}\,\,q\in Q_1,\,\, \lambda\geq \lambda_1,\\
    \int_{|\xi|\geq \lambda} \frac{1}{1+\xi^2}|\hat{q}(\xi)|^2 d\xi &<\varepsilon\quad\text{for all}\,\,q\in Q_2,\,\, \lambda\geq \lambda_2.
\end{align*}
Then for every $q= q_1+ q_2$ for some $q_1\in Q_1$, $q_2\in Q_2$ we have that
\begin{align*}
    \int_{|\xi|\geq \lambda} \frac{1}{1+\xi^2}|\hat{q}(\xi)|^2 d\xi &\lesssim \int_{|\xi|\geq \lambda} \frac{1}{1+\xi^2}|\hat{q}_1(\xi)|^2 d\xi + \int_{|\xi|\geq \lambda} \frac{1}{1+\xi^2}|\hat{q}_2(\xi)|^2 d\xi\lesssim \varepsilon
\end{align*}
for all $\lambda>\max\{\lambda_1, \lambda_2\}$.

4. We observe that $Q\subset \{f\in H^{-1}(\mathbb R): \|f\|_{\dot H^{-\frac 1 2}(\mathbb R)}\leq A\}+z$. Parts (2) and (3) assert that $Q$ is equicontinuous in $H^{-1}(\mathbb R)$.
\end{proof}

\section{Well-posedness of the truncated systems}\label{sec;wp}

As we laid out earlier, our strategy for proving Theorem \ref{sns KdV R} is to show that certain truncated systems have this property and use limiting arguments to transfer it to \eqref{KdV}. The first step is truncation in the Hamiltonian, which gives us the family of equations \eqref{eq:Hk}. Next, we perform a truncation in frequency and space, yielding equations \eqref{eq:Hkn}.

Naturally, before asking whether these systems obey symplectic non-squeezing, we need to consider whether they are even well-posed in the spaces we are interested in. Note that in both cases the nonlinearity only makes sense for small in $H^{-1}$ solutions. As it turns out, this is the major enemy as far as global well-posedness is concerned. In the case of \eqref{eq:Hk} we can circumvent this difficulty, but for \eqref{eq:Hkn} we compromise with existence of solutions up to times $T_n$ with $T_n\to\infty$ as $n\to\infty$ instead of global solutions.

\begin{theorem} [Global well-posedness of \ref{eq:Hk}]
\label{Hk wp}
Let $L>0$ and $\kappa \geq 1$ be fixed.\\
The equation \ref{eq:Hk} is globally well-posed for initial data in $B_{\delta_0} \subset H^{-1}(\mathbb R)$ and $ B_{\delta_0, \kappa}^L \subset H^{- 1}(\mathbb T_L)$ for $\delta_0>0$ small enough, independent of $\kappa$ and $L$. For each such initial data $u_0\in H^{-1}$ the solution $u$ obeys  
\begin{align} \label{H-1 conservation}
  \|u(t)\|_{H^{-1}}\lesssim \|u_0\|_{H^{- 1}}\quad\text{for all}\quad t\geq 0.  
\end{align}
Moreover, if in addition $u_0\in \dot H^{-\frac 1 2}$ then $u(t)\in \dot H^{-\frac 12}$ and 
\begin{align}  \label{H-1/2 conservation}
  \|u(t)\|_{\dot H^{-\frac 1 2}}\lesssim \|u_0\|_{\dot H^{-\frac 1 2}}e^{ct}\quad\text{for all}\quad t\geq 0.  
\end{align}
\end{theorem}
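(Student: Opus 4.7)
My plan is a three-step argument: Picard iteration for local existence, conservation of the perturbation determinant $\alpha(\kappa;\cdot)$ to globalize and bound the $H^{-1}$ norm, and a Gronwall-type energy estimate to propagate $\dot H^{-1/2}$ regularity. The main obstacle, I expect, is justifying the conservation of $\alpha(\kappa;\cdot)$ rigorously at the $H^{-1}$ level rather than merely for smooth solutions, since the Lax-pair isospectral identity is only formal at that regularity.

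\emph{Local well-posedness.} The linear propagator $e^{4\kappa^2 t\partial_x}$ is a unitary translation group on $H^{-1}$ (and on $\dot H^{-1/2}$). Propositions~\ref{diffeo R} and~\ref{diffeo T} assert that $q\mapsto g(q)-\tfrac{1}{2\kappa}$ is Lipschitz from the relevant ball into $H^1$ with constants independent of $\kappa$, so composition with $\partial_x$ maps Lipschitzly into $L^2 \subset H^{-1}$. A standard contraction on the Duhamel formula
$$u(t) = e^{4\kappa^2 t\partial_x}u_0 + 16\kappa^5 \int_0^t e^{4\kappa^2(t-s)\partial_x}\partial_x g(u(s))\, ds$$
in $C([-T,T]; B_\delta)$ then yields a unique solution on a time interval $T=T(\kappa,\delta)>0$ independent of $u_0$. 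The circle case is identical; the factor $\kappa^{-1/2}$ in the definition of $B^L_{\delta,\kappa}$ is precisely what keeps the bound \eqref{op norm} (and hence the Lipschitz constant) uniform in $L$.

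\emph{Globalization via conservation of $\alpha(\kappa;\cdot)$.} Because $H_\kappa$ is a linear combination of $\alpha(\kappa;\cdot)$ and $P$, and the family of perturbation determinants Poisson-commutes both with itself and with $P$, the quantity $\alpha(\kappa;u(t))$ is preserved along the \eqref{eq:Hk} flow: formally this follows from the Lax-pair identity, and at the $H^{-1}$ level one passes to rough data by approximation using the continuity of $\alpha$ established in \cite{KVZ17,KV18}. The estimate \eqref{op norm} identifies the quadratic term in the series for $\alpha$ as $\tfrac{1}{2\kappa}\int |\hat q(\xi)|^2/(\xi^2+4\kappa^2)\, d\xi$, which for fixed $\kappa$ is equivalent to $\|q\|_{H^{-1}}^2$; the smallness of $q\in B_{\delta_0}$ makes the higher-order terms a small perturbation of the quadratic one, yielding the two-sided comparison $\alpha(\kappa;q)\sim_\kappa \|q\|_{H^{-1}}^2$. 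Combined with conservation this gives \eqref{H-1 conservation}, and choosing $\delta_0$ small enough that this bound keeps the solution in $B_\delta$ for all time allows the local theory to iterate globally. The circle case is analogous.

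\emph{Propagation in $\dot H^{-1/2}$.} The right-hand side of \eqref{eq:Hk} is a pure $\partial_x$, so the mean of $q$ is preserved. Differentiating $\|q\|_{\dot H^{-1/2}}^2 = \langle q,|\partial_x|^{-1}q\rangle$ and integrating by parts, the linear contribution $-8\kappa^2\langle q,Rq\rangle$, with $R:=\partial_x|\partial_x|^{-1}$ the Riesz transform, vanishes by antisymmetry of $R$ on real functions, leaving
$$\tfrac{d}{dt}\|q\|_{\dot H^{-1/2}}^2 = -32\kappa^5\bigl\langle g(q)-\tfrac{1}{2\kappa},\, Rq\bigr\rangle.$$
By duality between $\dot H^{1/2}$ and $\dot H^{-1/2}$ this is bounded by $\kappa^5\|g(q)-\tfrac{1}{2\kappa}\|_{\dot H^{1/2}}\|q\|_{\dot H^{-1/2}}$. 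Revisiting the series for $g(q)$ with $L^2$ and $H^1$ replaced by $\dot H^{-1/2}$ and $\dot H^{1/2}$ respectively (the leading $l=1$ term is $-(4\kappa^2)^{-1}e^{-2\kappa|\cdot|}*q$, which directly satisfies the desired bound; higher-order terms are absorbed using smallness of $\|q\|_{H^{-1}}$) gives $\|g(q)-\tfrac{1}{2\kappa}\|_{\dot H^{1/2}}\lesssim \|q\|_{\dot H^{-1/2}}$. The resulting differential inequality $\tfrac{d}{dt}\|q\|_{\dot H^{-1/2}}^2 \lesssim \kappa^5\|q\|_{\dot H^{-1/2}}^2$ and Gronwall's lemma produce \eqref{H-1/2 conservation}.
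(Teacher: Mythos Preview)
Your three-step plan matches the paper's proof exactly: Picard iteration for local existence, conservation of $\alpha(\kappa;\cdot)$ for the global $H^{-1}$ bound (with the paper simply citing Proposition~3.2 of \cite{KV18}), and a Gr\"onwall argument for the $\dot H^{-1/2}$ estimate. The only noteworthy difference is in the last step: you differentiate $\|q\|_{\dot H^{-1/2}}^2$ and then appeal to a new bound $\|g(q)-\tfrac{1}{2\kappa}\|_{\dot H^{1/2}}\lesssim\|q\|_{\dot H^{-1/2}}$ by ``revisiting the series,'' whereas the paper stays in Duhamel form and uses only the already-established diffeomorphism bound
\[
\|g'(u)\|_{\dot H^{-1/2}}\leq \|g(u)-\tfrac{1}{2\kappa}\|_{H^1}\lesssim\|u\|_{H^{-1}}\lesssim\|u\|_{\dot H^{-1/2}}.
\]
Your detour through the series is unnecessary: since $\dot H^{1/2}\supset H^1$ normwise, the $\dot H^{1/2}$ bound you want is an immediate consequence of Proposition~\ref{diffeo R}/\ref{diffeo T}, and you may drop the term-by-term argument entirely.
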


\begin{proof}
For both statements, the proof is the same for the line and for the circle.

Local well-posedness of \eqref{eq:Hk} in $H^{-1}$ and $\dot H^{-\frac 1 2}$ follows easily by rewriting the equation in integral form and observing that the nonlinearity is Lipschitz. By the diffeomorphism property of $g$,
\begin{align}
\label{dif1}
    \|g'(u)-g'(v)\|_{H^{-1}}\lesssim \|g(u)-g(v)\|_{H^1}\lesssim \|u-v\|_{H^{-1}}
\end{align}
and
\begin{align}
\label{dif2}
    \|g'(u)-g'(v)\|_{\dot H^{-\frac 1 2}}\leq \|g(u)-g(v)\|_{H^1}\lesssim \|u-v\|_{H^{-1}}\lesssim\|u-v\|_{\dot H^{-\frac 1 2}},
\end{align}
so a Picard iteration argument establishes the local well-posedness in both spaces. Then global well-posedness in $H^{-1}$ follows by \eqref{H-1 conservation} which is  consequence of the conservation of $\alpha(\varkappa)$ for all $\varkappa\geq 1$. For more details, see Proposition 3.2 in \cite{KV18}. 

Next, using the Duhamel formula we get that
\begin{align*}
    \|u(t)\|_{\dot H^{-\frac 1 2}}&\lesssim \|u_0\|_{\dot H^{-\frac 1 2}}+\int_0^t \|g'(u(s))\|_{\dot H^{-\frac 1 2}} ds\\
    &\lesssim \|u_0\|_{\dot H^{-\frac 1 2}}+\int_0^t \|g(u(s))\|_{H^1} ds\\
    &\lesssim \|u_0\|_{\dot H^{-\frac 1 2}}+\int_0^t \|u(s)\|_{H^{-1}} ds\\
    &\lesssim \|u_0\|_{\dot H^{-\frac 1 2}}+\int_0^t \|u(s)\|_{\dot H^{-\frac 1 2}} ds
\end{align*}
for all $t\geq 0$. An application of Gr\"onwall's inequality yields \eqref{H-1/2 conservation}.

As far as uniqueness is concerned, we will only show this in $H^{-1}$. For any two solutions $u, v \in H^{-1}$ to \eqref{eq:Hk}, by Duhamel's formula and estimate \eqref{dif1} we get
\begin{align*}
    \|u(t)-v(t)\|_{H^{-1}} \lesssim  \|u(0)-v(0)\|_{H^{-1}}+ \int_0^t  \| u(s)- v(s)\|_{H^{-1}} ds
\end{align*}
so 
$$\|u(t)-v(t)\|_{H^{-1}}\lesssim \|u(0)-v(0)\|_{H^{-1}} e^{ct}\quad\text{or all}\,\,t\geq 0.$$
In particular, we conclude that for each small enough initial data $u_0\in H^{-1}$ there exists unique global solution $u\in H^{-1}$ to \eqref{eq:Hk}.
\end{proof}
 
\begin{remark}
It is evident that in the case of the homogeneous Sobolev space our proof heavily relies on the presence of the derivative on the nonlinearity, as indicated in \eqref{dif2}. The same argument can be used to prove a global well-posedness result and the corresponding estimates for \ref{eq:Hk} in $H^{-s}$ and $\dot H^{-s}$ on the line and on the circle for any $0\leq s\leq 1$. 
\end{remark}

Now we turn our attention to the well-posedness of \eqref{eq:Hkn}. As far as the frequency truncations are concerned, at this moment our sequences of frequency cut-offs $m_n, M_n\in 2^{\mathbb Z}$ only need to satisfy
\begin{align*}
    \lim_{n\to\infty}m_n=0, \qquad \lim_{n\to\infty}M_n=\infty.
\end{align*}
We also consider a sequence $L_n>0$, indicating the lengths of the tori we are working on. We denote these tori by $\mathbb T_n:= \mathbb R/(L_n \mathbb Z)$. We do not impose any hypotheses on $L_n$ for now. It makes sense to focus on initial data in the space
\begin{align*}
    \mathcal{H}_n:= \{f\in H^{-1}(\mathbb T_n): P^{L_n}_{\leq \frac{m_n}{2}}f= P^{L_n}_{>2M_n}f=0\}.
\end{align*}
Solutions to \eqref{eq:Hkn} with initial data in $\mathcal H_n$ stay in $\mathcal{H}_n$.

\begin{theorem} [Well-posedness for \ref{eq:Hkn}]
\label{Hkn wp}
Fix $\kappa \geq 1$. There exists $\delta_0>0$ small enough (independent of $n$ and $\kappa$) and a sequence $T_n>0$ satisfying $\lim_{n\to\infty} T_n=\infty$ so that the following are true for every $n\in\mathbb N$: For every $u_{n,0}\in \mathcal H_n \cap \{f\in \dot H^{-\frac 12}(\mathbb T_n): \kappa^{-\frac 12} \|f\|_{\dot H^{-\frac 12}(\mathbb T_n)}\leq \delta_0\}\subset B_{\delta_0, \kappa}^{L_n}$ there exists a unique solution $u_n\in C([0,T_n])H^{-1}(\mathbb T_n)$ to the the equation \eqref{eq:Hkn}. For each such initial data $u_{n,0}$ the solution $u_n(t)\in \mathcal H_n$ obeys 
\begin{align}\label{H-1 trunc}
  \|u_n(t)\|_{H^{-1}(\mathbb T_{n})}\lesssim \|u_{n,0}\|_{\dot H^{-\frac 12}(\mathbb T_{n})}
\end{align}
and  
\begin{align} \label{H-1/2 trunc}
  \|u_n(t)\|_{\dot H^{-\frac 1 2}(\mathbb T_{n})}\lesssim \|u_{n,0}\|_{\dot H^{-\frac 1 2}(\mathbb T_{n})}e^{ct}
\end{align}
for all $t\in[0,T_n]$. The implicit constants here do not depend on $n$.

\end{theorem}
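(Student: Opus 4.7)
Following the template of the proof of Theorem \ref{Hk wp}, I would rewrite \eqref{eq:Hkn} in Duhamel form using the isometric linear propagator $e^{4\kappa^2 t\partial_x}$. On the ball $B^{L_n}_{\delta_0,\kappa}$ the nonlinearity is Lipschitz in $H^{-1}(\mathbb T_n)$: since $P^{L_n}_{m_n<\dots\leq M_n}$ has operator norm at most one on each Sobolev space, Proposition \ref{diffeo T} gives
\[
\bigl\|P^{L_n}_{m_n<\dots\leq M_n}\bigl[g'(P^{L_n}_{m_n<\dots\leq M_n}q_1)-g'(P^{L_n}_{m_n<\dots\leq M_n}q_2)\bigr]\bigr\|_{H^{-1}} \lesssim \|q_1-q_2\|_{H^{-1}},
\]
with constant independent of $n$ and $L_n$; the analogous bound in $\dot H^{-\frac 12}$ follows using $\|\cdot\|_{H^{-1}}\le \|\cdot\|_{\dot H^{-\frac 12}}$ on mean-zero functions. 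A contraction mapping then produces a unique local solution $u_n\in C([0,T_{\mathrm{loc}}];H^{-1}(\mathbb T_n))$, with $T_{\mathrm{loc}}$ depending only on $\kappa$ and $\delta_0$.

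\textbf{Invariance and a priori bounds.} The Fourier multiplier $m(k/M_n)-m(k/m_n)$ is supported in $\{m_n\le|k|\le 2M_n\}$, so the nonlinear term on the right of \eqref{eq:Hkn} lies in $\mathcal H_n$, while the linear transport preserves Fourier supports; hence $\mathcal H_n$ is invariant under the flow. Combining Duhamel with the chain
\[
\bigl\|P^{L_n}_{m_n<\dots\leq M_n}g'(P^{L_n}_{m_n<\dots\leq M_n}u)\bigr\|_{\dot H^{-\frac 12}} \le \bigl\|g(P^{L_n}_{m_n<\dots\leq M_n}u)-\tfrac{1}{2\kappa}\bigr\|_{H^1} \lesssim \|u\|_{H^{-1}} \le \|u\|_{\dot H^{-\frac 12}}
\]
and Gr\"onwall's inequality yields \eqref{H-1/2 trunc} with rate $c$ independent of $n$. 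The companion bound \eqref{H-1 trunc} follows from the parallel estimate in $H^{-1}$ combined with the embedding $\|u_{n,0}\|_{H^{-1}}\le \|u_{n,0}\|_{\dot H^{-\frac 12}}$.

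\textbf{Lifespan and main obstacle.} Standard continuation extends the local solution as long as $u_n(t)$ remains in $B^{L_n}_{\delta_0,\kappa}$, so I would define $T_n$ to be that exit time. The principal difficulty is arranging $T_n\to\infty$ while the implicit constants in \eqref{H-1 trunc}--\eqref{H-1/2 trunc} remain uniform in $n$: because the frequency truncation destroys the conservation of $\alpha(\kappa;\cdot)$ that drove globality in Theorem \ref{Hk wp}, Gr\"onwall alone supplies only a bounded window. My remedy would be to exploit the strong convergence $P^{L_n}_{m_n<\dots\leq M_n}\to \mathrm{Id}$ (from $m_n\to 0$ and $M_n\to\infty$) to compare $u_n$ with the $H_\kappa$-solution issuing from the same data on compact intervals; since the latter is globally bounded in $H^{-1}$ by Theorem \ref{Hk wp}, a continuity argument forces $T_n$ to diverge with $n$. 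Uniqueness on $[0,T_n]$ then follows from the Lipschitz estimate via Gr\"onwall, exactly as at the end of the proof of Theorem \ref{Hk wp}.
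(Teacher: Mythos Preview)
Your overall strategy matches the paper's: local well-posedness via the Lipschitz property of $g$, followed by comparison with the global $H_\kappa$-solution $v_n$ sharing the same initial data to force $T_n\to\infty$. One point is mis-assembled, however. Your claim that \eqref{H-1 trunc} ``follows from the parallel estimate in $H^{-1}$'' does not work: direct Gr\"onwall in $H^{-1}$ gives
\[
\|u_n(t)\|_{H^{-1}}\lesssim \|u_{n,0}\|_{H^{-1}}\,e^{ct},
\]
and on $[0,T_n]$ with $T_n\to\infty$ the factor $e^{cT_n}$ blows up, so the implicit constant is not uniform in $n$. The uniform bound \eqref{H-1 trunc} actually comes from the very comparison you reserve for the lifespan step. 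In the paper one estimates
\[
\|u_n(t)-v_n(t)\|_{\dot H^{-\frac12}}\lesssim (m_n^{1/2}+M_n^{-1/2})\,e^{ct}\,\|u_{n,0}\|_{\dot H^{-\frac12}}
\]
via Duhamel, Bernstein, and Gr\"onwall, then chooses $T_n$ so that $(m_n^{1/2}+M_n^{-1/2})e^{cT_n}\ll 1$; combining this with the conservation-driven bound \eqref{H-1 conservation} for $v_n$ yields \eqref{H-1 trunc} with a constant independent of $n$, and simultaneously keeps $u_n$ inside $B^{L_n}_{\delta_0,\kappa}$ so that the nonlinearity remains defined. Thus the comparison with $v_n$ does double duty: it gives both $T_n\to\infty$ and the $n$-uniform $H^{-1}$ control. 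Your proposal contains this argument; you just need to recognise that it, rather than a direct Gr\"onwall, is what proves \eqref{H-1 trunc}.
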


\begin{proof}
An argument similar to the one in the proof of Theorem \ref{Hk wp} readily shows that \eqref{eq:Hkn} is locally well-posed in $H^{-1}(\mathbb T_n)$. In order to upgrade the existence of solution to larger times it suffices to prove the estimate \eqref{H-1 trunc}. This will ensure both that the nonlinearity makes sense and that we can extend our solution up to time $T_n$.

Let $v_n$ be the unique global solution to \eqref{eq:Hk} with small enough initial data $v_n(0)=u_n(0)=u_{n,0} \in \mathcal{H}_n$ which is guaranteed to exist by Theorem \ref{Hk wp}. By Duhamel,
\begin{align*}
    \|u_n(t)-& v_n(t)\|_{\dot H^{-\frac 12}(\mathbb T_n)}\\
    &\lesssim \int_0^t \left\|P^{L_n}_{m_n<\dots\leq M_n}g'(P^{L_n}_{m_n<...\leq M_n} u_n(s))- g'(v_n(s))\right\|_{\dot H^{-\frac 12}(\mathbb T_n)} ds
\end{align*}
and for each $s$ triangle inequality and the diffeomorphism property of $g$ allow us to estimate
\begin{align*}
    \|P^{L_n}_{m_n<\dots\leq M_n} & g'(P^{L_n}_{m_n<...\leq M_n} u_n(s))- g'(v_n(s))\|_{\dot H^{-\frac 12}(\mathbb T_n)}\\
    &\lesssim   \|P^{L_n}_{\leq m_n} v_n(s)\|_{H^{-1}(\mathbb T_n)}+ \|P^{L_n}_{> M_n} v_n(s)\|_{H^{-1}(\mathbb T_n)}\\
    &  + \|P^{L_n}_{\leq m_n}g'(v_n(s))\|_{\dot H^{-\frac 12}(\mathbb T_n)}+ \|P^{L_n}_{> M_n}g'(v_n(s))\|_{\dot H^{-\frac 12}(\mathbb T_n)}\\
    &+ \|u_n(s)- v_n(s)\|_{\dot H^{-\frac 12}(\mathbb T_n)}.
\end{align*}
Bernstein inequalities, the diffeomorphism property, \eqref{H-1/2 conservation}, and finally Gr\"onwall's inequality yield
\begin{align*}
    \|u_n(t)- v_n(t)\|_{\dot H^{-\frac 12}(\mathbb T_n)}\lesssim (m_n^\frac 12 +M_n^{-\frac 12})e^{ct}\|u_{n,0}\|_{\dot H^{-\frac 12}(\mathbb T_n)}.
\end{align*}
Choosing times $T_n>0$ so that $(m_n^\frac 12 +M_n^{-\frac 12})e^{cT_n}\ll 1 $ and $T_n\to\infty$ as $n\to \infty$, and using \eqref{H-1 conservation}, we obtain \eqref{H-1 trunc} and \eqref{H-1/2 trunc}. Starting with $\delta_0>0$ sufficiently small will ensure that the nonlinearity will make sense up to time $T_n$.

Uniqueness can be shown as in the proof of Theorem \ref{Hk wp}; the presence of the frequency truncation does not affect the argument.
\end{proof}

From now on, we will refer by $\delta_0$ to a constant smaller than the minimum of the two small positive constants obtained in Theorems \ref{Hk wp} and \ref{Hkn wp}. Consequently, $\delta_0>0$ will be a small enough parameter to ensure well-posedness up to time $T_n$ for \eqref{eq:Hkn}, global well-posedness for \eqref{eq:Hk} and that the $H^{-1}$ norm of the solutions given by both Theorems \ref{Hk wp} and \ref{Hkn wp} remains bounded by $1$ and $\delta$ (as in Propositions \ref{diffeo R} and \ref{diffeo T}) in the time interval of existence.

The next question we need to address is whether the frequency truncated models \eqref{eq:Hkn} provide a good approximation for the \eqref{eq:Hk} flow on the torus $\mathbb T_n$.

\begin{theorem}
\label{Hkn to Hk}
Fix $\kappa\geq 1$. Let $u_n\in C([0,T_n])H^{-1}(\mathbb T_n)$ be the solution to \ref{eq:Hkn} with initial data $u_n(0)=u_{n,0}\in \mathcal H_n$, $\|u_{n,0}\|_{\dot H^{-\frac 12}(\mathbb T_n)}\leq \kappa^{\frac 12} \delta_0$, and $v_n\in C([0,T_n])H^{-1}(\mathbb T_n)$ be a solution to
\begin{equation} \label{eq:Hke}
    \frac{d}{dt} v_n = 4\kappa^2 v_n'+16\kappa^5 g'(v_n)+e_n
\end{equation}
with initial data $v_n(0)=v_{n,0}\in H^{-1}(\mathbb T_n)$.
If $\|e_n(t)\|_{H^{-1}(\T_n)}<\varepsilon$ for all $t\in [0,T_n]$, then
$$\|u_n(t)-v_n(t)\|_{H^{-1}(\T_n)} \lesssim \big( \|u_{n,0}-v_{n,0}\|_{H^{-1}(\T_n)}+ (m_n^{\frac 1 2}+M_n^{-\frac 1 2})\|u_{n,0}\|_{\dot H^{-\frac 1 2}(\T_n)}+\varepsilon  \big) e^{ct}$$
for all $t\in [0,T_n]$.
\end{theorem}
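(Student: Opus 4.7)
The plan is to follow Duhamel, telescope through $g'(u_n)$, and close with Gr\"onwall's inequality. Since the linear propagator $e^{4\kappa^2 t\partial_x}$ is unitary on $H^{-1}(\T_n)$, subtracting the integral formulations of \eqref{eq:Hkn} and \eqref{eq:Hke} and applying the triangle inequality in $H^{-1}(\T_n)$ yields
\[
\|u_n(t)-v_n(t)\|_{H^{-1}} \leq \|u_{n,0}-v_{n,0}\|_{H^{-1}} + 16\kappa^5\int_0^t \|\mathcal N(s)\|_{H^{-1}}\,ds + t\varepsilon,
\]
where
\[
\mathcal N(s) := P^{L_n}_{m_n<\dots\leq M_n}\, g'\!\bigl(P^{L_n}_{m_n<\dots\leq M_n}u_n(s)\bigr) - g'(v_n(s)).
\]
To finish, it will suffice to establish the pointwise-in-time bound
\[
\|\mathcal N(s)\|_{H^{-1}} \lesssim \|u_n(s)-v_n(s)\|_{H^{-1}} + (m_n^{\frac 12}+M_n^{-\frac 12})\,\|u_n(s)\|_{\dot H^{-\frac 12}},
\]
and then to substitute \eqref{H-1/2 trunc} to replace $\|u_n(s)\|_{\dot H^{-\frac 12}}$ by $e^{cs}\|u_{n,0}\|_{\dot H^{-\frac 12}}$ before invoking Gr\"onwall.

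To produce this bound I will insert $g'(u_n)$ into $\mathcal N$, obtaining the four-term decomposition
\[
\mathcal N = \bigl[g'(u_n)-g'(v_n)\bigr] + P^{L_n}_{m_n<\dots\leq M_n}\bigl[g'\!\bigl(P^{L_n}_{m_n<\dots\leq M_n}u_n\bigr)-g'(u_n)\bigr] - P^{L_n}_{\leq m_n}g'(u_n) - P^{L_n}_{>M_n}g'(u_n).
\]
The first bracket is immediately controlled by $\|u_n-v_n\|_{H^{-1}}$ via the Lipschitz continuity of $q\mapsto g'(q)$ supplied by the diffeomorphism property of Proposition \ref{diffeo T}. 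The same Lipschitz estimate applied to the second bracket, together with boundedness of the Littlewood--Paley projector on $H^{-1}$, reduces its contribution to $\|P^{L_n}_{\leq m_n}u_n\|_{H^{-1}}+\|P^{L_n}_{>M_n}u_n\|_{H^{-1}}$, which a direct Fourier-side comparison of the multipliers $(1+k^2)^{-1}$ and $|k|^{-1}$ shows is bounded by $(m_n^{\frac 12}+M_n^{-\frac 12})\|u_n\|_{\dot H^{-\frac 12}}$. The last two terms are even easier: since $g(u_n)\in H^1(\T_n)$ with $\|g(u_n)\|_{H^1}\lesssim\|u_n\|_{H^{-1}}$, the standard Bernstein estimates produce factors $m_n$ and $M_n^{-1}$, which are strictly smaller than $m_n^{\frac 12}$ and $M_n^{-\frac 12}$ once $m_n\leq 1 \leq M_n$, and \eqref{H-1 trunc} absorbs the resulting $H^{-1}$ norm of $u_n$ into $\|u_{n,0}\|_{\dot H^{-\frac 12}}$.

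The one place where care will be needed is the second bracket: because $u_n$ only lives in $\dot H^{-\frac 12}$ rather than in $L^2$, naively applying the triangle inequality in $H^{-1}$ would give a useless $O(1)$ estimate on the frequency tails of $u_n$. The required decay will come from the elementary pointwise inequalities $(1+k^2)^{-1}\leq 2m_n|k|^{-1}$ for $0<|k|\leq 2m_n$ and $(1+k^2)^{-1}\leq M_n^{-1}|k|^{-1}$ for $|k|\geq M_n$, which convert Fourier mass measured in $\dot H^{-\frac 12}$ into genuine $H^{-1}$ smallness and, crucially, do so with constants uniform in the period $L_n$. With this single observation in hand, the remainder is routine Duhamel/Gr\"onwall bookkeeping.
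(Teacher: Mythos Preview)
Your argument is correct and follows essentially the same route as the paper: Duhamel, a telescoping of $\mathcal P g'(\mathcal P u_n)-g'(v_n)$ into a Lipschitz piece, frequency-tail pieces of $u_n$, and frequency-tail pieces of $g'$, followed by Bernstein, \eqref{H-1/2 trunc}, and Gr\"onwall. The only cosmetic difference is that you telescope through $g'(u_n)$ whereas the paper telescopes through $g'(\mathcal P u_n)$, so your outer projection lands on $g'(u_n)$ rather than $g'(\mathcal P u_n)$; the estimates are identical in either case.
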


\begin{proof}
By Duhamel's formula, for all $t\in[0,T_n]$
\begin{align*}
    \|u_n(t)-v_n(t) & \|_{H^{-1}(\mathbb T_n)}\lesssim \|u_{n,0}-v_{n,0}\|_{H^{-1}(\mathbb T_n)}+\int_0^t \|e_n(s)\|_{H^{-1}(\mathbb T_n)} ds \\
    &+\int_0^t \|P^{L_n}_{m_n<\dots\leq M_n}g'(P^{L_n}_{m_n<\dots\leq M_n}u_n(s))-g'(v_n(s))\|_{H^{-1}(\mathbb T_n)} ds  .
\end{align*}
We only need to work on the last term. For the sake of conciseness, here we denote $P^{L_n}_{m_n<\dots\leq M_n}$ by $\mathcal{P}$.

Working similarly as in the proof of Theorem \ref{Hkn wp},
\begin{align*}
    \|\mathcal P g'(\mathcal P u_n(s))- & g'(v_n(s))\|_{H^{-1}(\T_n)} \lesssim   \|u_n(s)-v_n(s)\|_{H^{-1}(\T_n)}\\
    &+\|P^{L_n}_{\leq m_n} g'(\mathcal P u_n(s))\|_{H^{-1}(\T_n)} +  \|P^{L_n}_{> M_n} g'(\mathcal P u_n(s))\|_{H^{-1}(\T_n)} \\
    &+ \|P^{L_n}_{\leq m_n} u_n(s)\|_{H^{-1}(\T_n)} + \|P^{L_n}_{>M_n} u_n(s)\|_{H^{-1}(\T_n)}.
\end{align*}
By Bernstein inequalities, the diffeomorphism property, and \eqref{H-1/2 trunc}, we get that
\begin{align*}
    \|u_n(t)-v_n(t)\|_{H^{-1}(\mathbb T_n)}\lesssim & \|u_{n,0}-v_{n,0}\|_{H^{-1}(\mathbb T_n)} \\
    &+ \left(\varepsilon+ \left(m_n^{\frac 1 2}+M_n^{-\frac 1 2}\right)\|u_{n,0}\|_{\dot H^{-\frac 1 2}(\T_n)}\right)e^{ct}\\
    &+ \int_0^t \|u_n(s)-v_n(s)\|_{H^{-1}(\mathbb T_n)}ds,
\end{align*}
so an application of Gr\"onwall's inequality finishes the proof.
\end{proof}

\section{From the Line to the Circle}\label{sec;periodization}
Having settled on the finite-dimensional systems that we will use to approximate the $H_\kappa$ flow, the next step is to determine the suitable parameters (that is, the appropriate ball and cylinder) for which we should apply Theorem \ref{gromov}. In this section we develop the tools that will enable us to pass our fixed parameters from the line setting to the circle and ultimately allow us to take advantage of Gromov's result.

\begin{definition}
\label{def per}
Let $f$ be a compactly supported function on $\R$. For $L>0$ large so that the support of $f$ is contained in an interval of length $L$, we define the \textit{$L$-periodization} of $f$
\begin{align*}
    \mathring f_L(x):= \sum_{j\in\mathbb{Z}} f(x+jL).
\end{align*}
\end{definition}

This is an $L$-periodic function that agrees with $f$ on its support. One readily sees that $\mathring f_L$ inherits the smoothness of $f$. However, it is non-trivial to show that it belongs to the corresponding fractional homogeneous Sobolev spaces.  

\begin{lemma}
\label{periodization}
Let $f\in C_c^\infty(\R)$, $L_0>0$ so that the support of $f$ is contained in an interval of length $L_0$.\\
1. For all $L>L_0$ and for all integers $k\geq 0$, $\mathring f_L\in \dot H^k(\T_L)$  and
\begin{align*}
    \|\mathring f_L\|_{\dot H^{k} (\mathbb T_L)}=\|f\|_{\dot H^{k} (\mathbb R)}.
\end{align*}
2. For all $L>L_0$ and for all integers $k< 0$, if $f\in\dot H^k(\R)$ then $\mathring f_L\in \dot H^k(\T_L)$  and
\begin{align*}
    \|\mathring f_L\|_{\dot H^{k} (\mathbb T_L)}=\|f\|_{\dot H^{k} (\mathbb R)}.
\end{align*}
3. If $f\in \dot H^{s}(\R)$ for some $s\in \R$, then $\mathring f_L \in \dot H^{s} (\mathbb T_L)$ for $L$ sufficiently large and
\begin{align*}
    \lim_{L\to\infty} \|\mathring f_L\|_{\dot H^{s} (\mathbb T_L)}=\|f\|_{\dot H^{s} (\mathbb R)}.
\end{align*}
\end{lemma}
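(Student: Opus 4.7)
My plan is to base all three parts on a single physical-space identity: because $\operatorname{supp} f$ fits in an interval of length $L_0 < L$, the translates $\{f(\cdot + jL)\}_{j\in\mathbb{Z}}$ have pairwise disjoint supports in $\R$. Consequently, restricting $\mathring f_L$ to a fundamental domain of length $L$ picks up exactly one translate of $f$, and in particular
$$\int_0^L |\mathring f_L(x)|^2 \, dx = \int_\R |f(x)|^2 \, dx.$$
A direct change of variable $y = x + jL$ in the definitions also gives the Fourier-side identity $\widehat{\mathring f_L}(k') = L^{-1} \hat f(k')$ for $k' \in L^{-1}\mathbb{Z}$, which will be the workhorse for Parts 2 and 3.

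For Part 1 with $k \geq 0$ an integer, periodization commutes with differentiation, so $\partial_x^k \mathring f_L = \mathring{(\partial_x^k f)}_L$, and $\partial_x^k f$ is still compactly supported in the same interval of length $L_0$. Applying the displayed $L^2$ identity to $\partial_x^k f$, and observing that on both $\R$ and $\T_L$ the homogeneous norm $\|\cdot\|_{\dot H^k}$ is $(2\pi)^{-k}$ times the $L^2$-norm of the $k$-th derivative, the constants cancel and the stated equality of Sobolev norms follows.

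For Part 2 with $k = -N < 0$, I will dualize to Part 1 via iterated antiderivatives. Since $f \in C_c^\infty$ has $\hat f$ entire (Paley--Wiener), finiteness of $\int |\hat f|^2 |\xi|^{-2N} d\xi$ forces $\hat f$ to vanish to order $N$ at the origin, equivalently $\int x^j f \, dx = 0$ for $0 \leq j \leq N-1$. Define iteratively $F_0 = f$ and $F_j(x) = \int_{-\infty}^x F_{j-1}(t) \, dt$; the vanishing moments propagate so each $F_j$ remains compactly supported in the original interval. Using Plancherel together with the Fourier identity $\widehat{\mathring f_L}(k') = L^{-1} \hat f(k')$, I can relate both $\dot H^{-N}$ norms to $L^2$-norms of $N$-fold antiderivatives of $f$, and then apply the disjoint-support argument to $F_N$.

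For Part 3 with general $s \in \R$, the Fourier identity gives
$$\|\mathring f_L\|_{\dot H^s(\T_L)}^2 = L^{-1} \sum_{k' \in L^{-1}\mathbb{Z}\setminus\{0\}} |\hat f(k')|^2 |k'|^{2s},$$
which is a Riemann sum of spacing $L^{-1}$ for $\int_\R |\hat f|^2 |\xi|^{2s} d\xi = \|f\|_{\dot H^s(\R)}^2$. For Schwartz $f$, rapid decay of $\hat f$ at infinity and, when $s < 0$, the order of vanishing of $\hat f$ at $0$ imposed by $f \in \dot H^s$, together yield convergence of the Riemann sum to the integral as $L \to \infty$. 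For a general $f \in \dot H^s(\R)$, approximate in the $\dot H^s$ norm by Schwartz (or $C_c^\infty$) functions and pass to the limit. I expect the main obstacle to lie precisely here, in the case $s < 0$: near $\xi = 0$ the weight $|\xi|^{2s}$ blows up, so I must control uniformly in $L$ the low-frequency contribution of both the tail of the Riemann sum and of the approximation error, using the vanishing of $\hat f$ at the origin to offset the singular weight.
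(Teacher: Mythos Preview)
Your approach matches the paper's. For Part~1 both reduce to the $L^2$ identity via commutation of periodization with $\partial_x$; for Part~2 the paper takes one antiderivative at a time and inducts while you take all $N$ at once---these are equivalent, and your vanishing-moments observation $\int x^j f=0$ for $0\le j\le N-1$ is precisely what makes the paper's induction run. For Part~3 both identify $\|\mathring f_L\|_{\dot H^s(\T_L)}^2$ as a Riemann sum for $\|f\|_{\dot H^s(\R)}^2$ via the identity $\widehat{\mathring f_L}(k')=L^{-1}\hat f(k')$.

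One correction to your plan for Part~3: the density step for ``general $f\in\dot H^s(\R)$'' and the anticipated ``main obstacle'' near $\xi=0$ are unnecessary. The lemma already hypothesizes $f\in C_c^\infty(\R)$, so your ``Schwartz $f$'' case \emph{is} the full statement; the paper's proof accordingly stops after observing the Riemann-sum convergence for $f\in\dot H^s\cap C_c^\infty$.
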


\begin{proof}
The first statement is straightforward for $k=0$, due to the locality of the $L^2$-norm. For $k\in\mathbb N$, 
$$(\mathring f_L)^{(k)}(x)= \sum_{j\in \mathbb Z} f^{(k)}(x+jL)=\mathring {(f^{(k)})}_L(x),$$
hence $\|(\mathring f_L)^{(k)}\|_{L^2(\T_L)}=\|\mathring {(f^{(k)})}_L\|_{L^2(\T_L)}=\|f^{(k)}\|_{L^2(\R)}$. 

For integers $k<0$ we proceed inductively. First of all, we already argued about $k=0$. Suppose that the statement is true for some $k\leq 0$. Then for $f\in \dot H^{k-1}(\R)\cap C_c^\infty(\R)$ supported on $[a, a+L_0]$ we consider
\begin{align*}
    F(x):=\int_a^x f(y) dy.
\end{align*}
By the definition of the negative order homogeneous Sobolev spaces, $\int f=0$, therefore $F$ is also supported on $[a, a+L_0]$. We can also see that $F\in \dot H^{k}(\R)\cap C_c^\infty(\R)$, $F'=f$. The inductive hypothesis ensures that $\mathring F_L\in \dot H^k(\T_L)$ so arguing as before
$$\mathring f_L= \mathring{(F')}_L=(\mathring F_L)'\in \dot H^{k-1}(\T_L)$$
and
\begin{align*}
    \|\mathring f_L\|_{\dot H^{k-1} (\mathbb T_L)}=\|(\mathring F_L)'\|_{\dot H^{k-1} (\mathbb T_L)}=4\pi^2\|\mathring F_L\|_{\dot H^{k} (\mathbb T_L)}=4\pi^2 \|F\|_{\dot H^{k} (\mathbb R)}=\|f\|_{\dot H^{k-1} (\mathbb R)}.
\end{align*}

For the last statement, we observe that
\begin{align*}
    \|\mathring f_L\|_{\dot H^{s} (\mathbb R)}^2&= L \sum_{k\in L^{-1}\mathbb{Z}} |k|^{2s} \left| \widehat{\mathring f_L} (k)\right|^2 \\
    &= L \sum_{k\in L^{-1}\mathbb{Z}} |k|^{2s} \left| \frac {1}{L}\hat{f}(k)\right|^2\\
    &=  \frac{1}{L} \sum_{k\in L^{-1}\mathbb{Z}} |k|^{2s} \left|\hat{f} (k)\right|^2
\end{align*}
which converges to $\int |\xi|^{2s} |\hat{f} (\xi)|^2 d\xi=\|f\|_{\dot H^{s} (\mathbb R)}^2$ as $L\to\infty$, given that $f \in \dot H^s(\mathbb R) \cap C_c^\infty(\R)$.
\end{proof}

\begin{lemma}
\label{norm3}
Let $f\in C_c^\infty(\R)$ supported on an interval of length $L_0$. For given sequences $m_n, M_n\in 2^{\mathbb Z}$ and $L_n>L_0$ such that $\lim_{n\to\infty}m_n=0$, $\lim_{n\to\infty}M_n=\infty$, and $\lim_{n\to\infty}L_n=\infty$, consider 
\begin{align*}
    z_n= \mathring f_{L_n}, \qquad \zeta_n=P^{L_n}_{m_n<\dots\leq M_n} z_n, \qquad \lambda_n= \frac{1}{\|\zeta_n\|_{\dot H^{\frac 12}(\mathbb T_n)}}\zeta_n.
\end{align*}
\begin{enumerate}
\item If $f\in \dot H^{-\frac 1 2}(\mathbb R)$, then for $n$ sufficiently large $z_n, \zeta_n, \lambda_n \in \dot H^{-\frac 1 2} (\mathbb T_n)$ and 
\begin{align*}
    \lim_{n\to \infty}\|\zeta_n-z_n\|_{L^2(\mathbb T_n)}=0.
\end{align*}
\item If $\|f\|_{\dot H^{\frac 1 2}(\R)}=1$, then for $n$ sufficiently large $z_n,\zeta_n, \lambda_n \in \dot H^{\frac 1 2} (\mathbb T_n)\cap \dot H^{1} (\mathbb T_n)$ and
\begin{align*}
    \lim_{n\to \infty}\|\lambda_n-z_n\|_{\dot H^{\frac 1 2}(\mathbb T_n)}=0.
\end{align*}
\end{enumerate}
\end{lemma}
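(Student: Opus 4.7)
The plan is to carry out a direct Fourier analysis on $\mathbb T_n$, exploiting the identity
\[ \widehat{z_n}(k)=\tfrac{1}{L_n}\hat f(k),\qquad k\in L_n^{-1}\mathbb Z, \]
which holds since $z_n$ is the $L_n$-periodization of $f$. Combined with Lemma \ref{periodization}(3) this settles the Sobolev memberships: in (1), $f\in\dot H^{-\frac12}(\mathbb R)$ yields $z_n\in\dot H^{-\frac12}(\mathbb T_n)$ for $n$ large, while in (2) the fact that $f\in C_c^\infty\subset \dot H^{\frac12}(\mathbb R)\cap \dot H^1(\mathbb R)$ yields $z_n\in \dot H^{\frac12}(\mathbb T_n)\cap \dot H^1(\mathbb T_n)$. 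The memberships of $\zeta_n$ and $\lambda_n$ are then immediate, since a Littlewood--Paley truncation preserves every Sobolev space and $\lambda_n$ is a scalar multiple of $\zeta_n$, well-defined for $n$ large (as verified below).

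Both convergence statements hinge on analyzing the multiplier $\mu_n(k):=1-\bigl(m(k/M_n)-m(k/m_n)\bigr)$, which satisfies $|\mu_n|\leq 1$ and is supported on $\{|k|\leq 2m_n\}\cup\{|k|>M_n\}$. I rewrite
\[ \|\zeta_n-z_n\|_{L^2(\mathbb T_n)}^2=\tfrac{1}{L_n}\sum_{k\in L_n^{-1}\mathbb Z}\mu_n(k)^2|\hat f(k)|^2 \]
and
\[ \|\zeta_n-z_n\|_{\dot H^{\frac12}(\mathbb T_n)}^2=\tfrac{1}{L_n}\sum_{k\in L_n^{-1}\mathbb Z}\mu_n(k)^2|k||\hat f(k)|^2, \]
and estimate each quantity by splitting into the two tail regions. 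The low-frequency contributions ($|k|\leq 2m_n$) involve only $O(m_n L_n)$ lattice points, producing bounds of size $O(m_n\|\hat f\|_{L^\infty}^2)$ and $O(m_n^2\|\hat f\|_{L^\infty}^2)$ respectively; the high-frequency contributions ($|k|>M_n$) are dispatched by dominating the sum by $L_n^{-1}$ times the integral of a Schwartz majorant of $|\hat f(\xi)|^2$ (resp.\ $|\xi||\hat f(\xi)|^2$) on $\{|\xi|>M_n\}$, which vanishes as $M_n\to\infty$ uniformly in $L_n$. All four pieces tend to $0$.

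The remaining statement in (2) concerning $\lambda_n$ follows from the triangle inequality
\[ \|\lambda_n-z_n\|_{\dot H^{\frac12}(\mathbb T_n)}\leq \bigl|1-\|\zeta_n\|_{\dot H^{\frac12}(\mathbb T_n)}\bigr|+\|\zeta_n-z_n\|_{\dot H^{\frac12}(\mathbb T_n)}, \]
combined with $\|\zeta_n\|_{\dot H^{\frac12}(\mathbb T_n)}\to 1$, which is a consequence of $\|z_n\|_{\dot H^{\frac12}(\mathbb T_n)}\to 1$ (Lemma \ref{periodization}(3) with $\|f\|_{\dot H^{\frac12}(\mathbb R)}=1$) and the just-established convergence $\|\zeta_n-z_n\|_{\dot H^{\frac12}(\mathbb T_n)}\to 0$; the same reasoning confirms that $\lambda_n$ is well-defined for $n$ large. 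I do not anticipate any genuine obstacle: the entire argument is careful Riemann-sum bookkeeping once the multiplier $\mu_n$ has been identified, the only mildly delicate point being the simultaneous passage $L_n,M_n\to\infty$ on the high-frequency side, handled exactly as indicated.
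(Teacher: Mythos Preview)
Your argument is correct and reaches the same conclusions, but the route differs from the paper's.  The paper dispatches the tails via Bernstein inequalities: for instance, in part (1) it bounds
\[
\|\zeta_n-z_n\|_{L^2(\mathbb T_n)}\lesssim m_n^{1/2}\|z_n\|_{\dot H^{-1/2}(\mathbb T_n)}+M_n^{-1/2}\|z_n\|_{\dot H^{1/2}(\mathbb T_n)},
\]
and then invokes Lemma~\ref{periodization} to replace the torus norms of $z_n$ by the corresponding line norms of $f$; the $\dot H^{1/2}$ and $\dot H^1$ tail estimates in part (2) are handled the same way.  Your approach is more elementary: you bypass Bernstein entirely, counting lattice points on the low-frequency side (exploiting only $\|\hat f\|_{L^\infty}<\infty$) and using a Riemann-sum/Schwartz-decay comparison on the high-frequency side.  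This has the minor advantage that your $L^2$ convergence in (1) uses only $f\in C_c^\infty$, whereas the paper's Bernstein bound actually consumes the hypothesis $f\in\dot H^{-1/2}$ for that step (the hypothesis is still needed for the membership claim, of course).  The paper's argument is slightly cleaner once the Bernstein machinery is in place; yours is more self-contained.

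One small wording slip: in your high-frequency estimate, the Riemann sum $\tfrac{1}{L_n}\sum_{|k|>M_n}\mu_n(k)^2|\hat f(k)|^2$ is bounded by the integral of a decreasing Schwartz majorant over $\{|\xi|>M_n-L_n^{-1}\}$, not by ``$L_n^{-1}$ times the integral.''  The conclusion---that this vanishes as $M_n\to\infty$ uniformly in $L_n\geq 1$---is unaffected.
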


\begin{proof}

By Lemma \ref{periodization}, $z_n, \zeta_n, \lambda_n$ are in the appropriate homogeneous Sobolev spaces for large $n$. Moreover, if $f\in \dot H^{-\frac 12}(\mathbb R)$, for $n$ sufficiently large
\begin{align*}
    \|\zeta_n-z_n\|_{L^2(\mathbb T_n)} &\leq  \left\|P^{L_n}_{\leq m_n} z_n\right\|_{L^2(\mathbb T_n)}+\left\|P^{L_n}_{> M_n} z_n\right\|_{L^2(\mathbb T_n)} \\
    &\lesssim m_n^{\frac 1 2} \|z_n\|_{\dot H^{-\frac 1 2}(\mathbb T_n)}+ M_n^{-\frac 1 2} \|z_n\|_{\dot H^{\frac 1 2}(\mathbb T_n)}\\
    &\lesssim m_n^{\frac 1 2} \|f\|_{\dot H^{-\frac 1 2}(\mathbb R)}+ M_n^{-\frac 1 2} \|f\|_{\dot H^{\frac 1 2}(\mathbb R)}.
\end{align*}
A similar argument gives
\begin{align*}
    \left\|\zeta_n -z_n\right\|_{\dot H^{\frac 1 2} (\mathbb T_n)} & \lesssim m_n^{\frac 1 2} \|f\|_{L^2 (\mathbb R)}+ M_n^{-\frac 1 2}\|f\|_{\dot H^{1} (\mathbb R)},\\
    \left\|\zeta_n -z_n\right\|_{\dot H^{1} (\mathbb T_n)} & \lesssim m_n \|f\|_{L^2 (\mathbb R)}+ M_n^{-1}\|f\|_{\dot H^{2} (\mathbb R)}
\end{align*}
for $n$ large, without the additional assumptions of (1) or (2) on $f$. Therefore 
$$\lim_{n\to\infty}\left\|\zeta_n -z_n\right\|_{L^2 (\mathbb T_n)}=\lim_{n\to\infty}\left\|\zeta_n -z_n\right\|_{\dot H^{\frac 1 2} (\mathbb T_n)}=\lim_{n\to\infty}\left\|\zeta_n -z_n\right\|_{\dot H^{1} (\mathbb T_n)}=0.$$
In particular, if we also have that $\|f\|_{\dot H^{\frac 1 2}(\R)}=1$,
\begin{align*}
    \lim_{n\to\infty} \left\|\zeta_n\right\|_{\dot H^{\frac 1 2} (\mathbb T_n)}= \lim_{n\to\infty} \|z_n\|_{\dot H^{\frac 1 2} (\mathbb T_n)}=\|f\|_{\dot H^{\frac 1 2} (\mathbb R)}=1.
\end{align*}
Under this assumption we also get that
\begin{align*}
    \|\lambda_n-z_n\|_{\dot H^{\frac 1 2}(\mathbb T_n)}&\leq \left|1-\left\|\zeta_n\right\|_{\dot H^{\frac 1 2} (\mathbb T_n)}^{-1}\right| \left\|\zeta_n\right\|_{\dot H^{\frac 1 2} (\mathbb T_n)}+ \left\|\zeta_n -z_n\right\|_{\dot H^{\frac 1 2} (\mathbb T_n)}
\end{align*}
converges to $0$ as $n\to\infty$.
\end{proof}

\section{Finite Dimensional Approximation: From the Circle to the Line} \label{sec;fda}

The previous section dealt with the problem of determining the appropriate parameters for which we wish to apply Gromov's theorem to the finite dimensional Hamiltonian systems \eqref{eq:Hkn}. Now we need to prescribe a way to extract a `limit' of the witnesses we obtained. A first step towards this direction is, given a sequence of periodic initial data (whose periods go to infinity) that satisfy the same $\dot H^{-\frac 12}$ bound, to construct a sequence of compactly supported initial data (with ever larger supports) whose \ref{eq:Hk} evolution approximates the one of the original sequence under the \ref{eq:Hkn} flow. This is achieved via the `cutting' and `unwrapping' process we advertised earlier.

Let $\kappa\leq 1$ be fixed.
We are given $T>0$, $0<A<\frac {\delta_0}{4}$ and consider $m_n, M_n, N_n, L_n$ be sequences such that $m_n\to 0$ and $M_n, N_n, L_n \to \infty$ as $n\to \infty$,
\begin{align*}
    m_n^{-2}M_n \ll N_n, \qquad N_n^5\ll L_n.
\end{align*}
We are working on the torus $\mathbb T_n=\mathbb R/L_n \mathbb Z$ with a sequence $u_{n,0}\in \dot H^{-\frac 1 2}(\mathbb T_n)$ such that $u_{n,0}\in \mathcal H_n$ and $\|u_{n,0}\|_{\dot H^{-\frac 1 2}(\mathbb T_n)}\leq A$.

\subsection{`Cutting' on the circle}

For every $n$ large enough, essentially we want to divide the interval $[-\frac{L_n}{2}, \frac{L_n}{2}]$ into $N_n$ subintervals of length $\frac {L_n}{N_n}$ and look for one such subinterval where the $\dot H^{-\frac 1 2}$ norm of $u_{n,0}$ is comparatively small. To avoid some of the problems caused by the non-local nature of the $\dot H^{-\frac 1 2}$ norm, we work with a partition of unity instead.

Let $\phi_n$ be a radial smooth cutoff such that $\phi_n=1$ on $[- \frac {L_n}{4N_n}, \frac {L_n}{4N_n}]$, $\phi_n=0$ outside $[-\frac{3L_n}{4N_n},\frac{3L_n}{4N_n}]$, and $0\leq \phi_n(x)\leq 1$ for $\frac {L_n}{4N_n}\leq x \leq \frac {3L_n}{4N_n}$ with $\phi_n(x)+\phi_n(x-\frac {L_n}{N_n})=1$. We consider the smooth cutoffs
\begin{align*}
    \phi^k_n(x):= \phi_n\left(x+\frac{L_n}{2}-\frac{3L_n}{4N_n}-k \frac {L_n}{N_n}\right), \quad 0\leq k\leq N_n-1
\end{align*}
and their $L_n$-periodizations 
\begin{align*}
    \mathring \phi^k_n(x):= \sum_{j\in\mathbb Z}\phi^k_n(x+j L_n), \quad 0\leq k\leq N_n-1.
\end{align*}
Note that by the definition of $\phi_n$
\begin{align}\label{partition}
    \sum_{k=0}^{N_n-1}\mathring \phi^k_n =1.
\end{align}


\begin{lemma}
\label{bump}
There exists constant $C>0$ (independent of $n$) such that the following are true for every $n$:
\begin{enumerate}
    \item At least $\frac {9} {10} N_n$ elements $k$ in $\{0,\dots, N_n-1\}$ satisfy
\begin{align*}
    \|\mathring\phi^k_n u_{n,0}\|_{\dot H^{-\frac 1 2}(\T_n)} <C \|u_{n,0}\|_{\dot H^{-\frac 1 2}(\T_n)} M_n^{\frac 12} m_n^{-\frac 1 2} N_n^{-\frac 12}.
\end{align*}
\item At least $\frac {9} {10} N_n$ elements $k$ in $\{0,\dots, N_n-1\}$ satisfy
\begin{align*}
    \|\mathring\phi^k_n u_{n,0}\|_{\dot H^{-1}(\T_n)} <C \|u_{n,0}\|_{\dot H^{-\frac 1 2}(\T_n)} M_n^{\frac 12} m_n^{-1} N_n^{-\frac 12}.
\end{align*}
\end{enumerate}
\end{lemma}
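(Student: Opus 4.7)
I will prove both parts by a pigeonhole argument on the $L^2$ norms $\|\mathring\phi^k_n u_{n,0}\|_{L^2(\T_n)}$, and then convert the resulting $L^2$ bounds into the claimed negative Sobolev bounds using the frequency localization $m_n/2 < |\xi| \leq 2M_n$ of $u_{n,0} \in \mathcal H_n$. The aggregate estimate I will establish is
\[
\sum_{k=0}^{N_n-1} \|\mathring\phi^k_n u_{n,0}\|_{L^2(\T_n)}^2 \leq \|u_{n,0}\|_{L^2(\T_n)}^2 \leq 2M_n \|u_{n,0}\|_{\dot H^{-\frac 12}(\T_n)}^2,
\]
where the first step uses the partition of unity $\sum_k \mathring\phi^k_n \equiv 1$ together with $0 \leq \mathring\phi^k_n \leq 1$ (so that $\sum_k (\mathring\phi^k_n)^2 \leq 1$ pointwise), and the second uses the upper frequency cutoff $|\xi| \leq 2M_n$ of $u_{n,0}$. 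By Markov's inequality, at least $\tfrac{9}{10} N_n$ indices $k$ then satisfy $\|\mathring\phi^k_n u_{n,0}\|_{L^2(\T_n)}^2 \lesssim M_n N_n^{-1} \|u_{n,0}\|_{\dot H^{-\frac 12}(\T_n)}^2$.

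\textbf{Upgrade to negative Sobolev norms.} For such an index $k$, set $F_k := \mathring\phi^k_n u_{n,0}$ and split its Fourier support at $|\xi| = m_n/4$. The high-frequency part is immediate: for $s = \tfrac{1}{2}, 1$,
\[
\sum_{|\xi| > m_n/4} \frac{L_n |\hat F_k(\xi)|^2}{|\xi|^{2s}} \leq (m_n/4)^{-2s} \|F_k\|_{L^2(\T_n)}^2,
\]
which combined with the pigeonhole bound yields the claimed main terms $m_n^{-2s} M_n N_n^{-1} \|u_{n,0}\|_{\dot H^{-\frac 12}(\T_n)}^2$ in (1) and (2). For the low-frequency part $|\xi| \leq m_n/4$, the Fourier convolution identity gives $\hat F_k(\xi) = \sum_{|\eta| \geq m_n/2} \hat{\mathring\phi^k_n}(\xi - \eta)\, \hat u_{n,0}(\eta)$, and since $u_{n,0}$ has no frequencies below $m_n/2$ we have $|\xi - \eta| \geq m_n/4$ throughout the sum. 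Because $\mathring\phi^k_n$ is a $C^\infty$ bump of spatial scale $L_n/N_n$, integration by parts $N$ times yields $|\hat{\mathring\phi^k_n}(\mu)| \lesssim_N N_n^{-1}(N_n/(L_n|\mu|))^N$ for any $N \geq 1$; coupling this with Cauchy--Schwarz in $\eta$ produces a low-frequency contribution bounded by $C_N L_n^{O(1)} (N_n/(L_n m_n))^{2N-2} \|u_{n,0}\|_{\dot H^{-\frac 12}(\T_n)}^2$. The scaling $L_n \gg N_n^5$ together with $m_n \gg N_n^{-\frac 12}$ (which follows from $m_n^{-2}M_n \ll N_n$ since $M_n \to \infty$) forces $N_n/(L_n m_n)$ to decay as a negative power of $N_n$, so taking $N$ large enough renders this low-frequency contribution negligible compared with the high-frequency main term.

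\textbf{Main difficulty.} The technical heart is the low-frequency estimate just described, and it makes apparent why two of the hypotheses are indispensable. First, the low-frequency cutoff $m_n/2$ on $u_{n,0}$ cannot be removed: without it, multiplying by the non-mean-zero bump $\mathring\phi^k_n$ would inject low-frequency content of the same order as $u_{n,0}$ itself, and no $L^2$ bound on $F_k$ could ever yield the stated $\dot H^{-\frac 12}$ bound. Second, the spatial scaling $N_n^5 \ll L_n$ is precisely what makes $N_n/L_n \ll m_n$, which is exactly the condition ensuring that the Fourier tail of $\mathring\phi^k_n$ above $m_n/4$ is genuinely small. One final observation: the same collection of at least $\tfrac{9}{10} N_n$ indices produced by the pigeonhole step works simultaneously for both parts (1) and (2) of the lemma, since both follow from the single $L^2$ bound.
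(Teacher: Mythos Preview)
Your argument is correct, and it proceeds along a somewhat different route than the paper's. The paper bounds the full sum $\sum_{k=0}^{N_n-1}\|\mathring\phi^k_n u_{n,0}\|_{\dot H^{-s}}^2$ directly: after the same high/low frequency split, for the low-frequency piece it exploits the fact that the $\mathring\phi^k_n$ are translates of one another so that summing over $k$ produces a sum over the sparse lattice $(N_n/L_n)\mathbb Z$, and then controls $\hat{\mathring\phi^0_n}$ by its $\dot H^2$ or $\dot H^3$ norm. Only after bounding the aggregate does it pigeonhole, separately for $s=\tfrac12$ and $s=1$. You instead pigeonhole once at the $L^2$ level and then, for each good index, bound the low-frequency tail of $\mathring\phi^k_n u_{n,0}$ individually via the pointwise Fourier decay $|\hat{\mathring\phi^k_n}(\mu)|\lesssim_N N_n^{-1}(N_n/(L_n|\mu|))^N$. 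This is more elementary --- it never uses the translation structure of the family --- and it yields the slightly stronger conclusion that the same set of good indices works for both parts. The paper's approach, on the other hand, gives explicit powers rather than an unspecified ``$N$ large enough,'' and its low-frequency estimate only needs $L_n\gg m_n^{-2}M_n N_n^{2}$, which is implied by (and weaker than) the hypotheses $m_n^{-2}M_n\ll N_n$ and $N_n^5\ll L_n$ that your argument invokes to absorb the $L_n^{O(1)}$ prefactor.
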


\begin{proof}
Firstly, we note that by the definition of $\mathring\phi^k_n$
\begin{align} \label{L2}
    \sum_{k=0}^{N_n-1} \left\|\mathring\phi^k_n u_{n,0}\right\|_{L^2(\T_n)}^2= \int_{\T_n} \sum_{k=0}^{N_n-1} \left(\mathring\phi^k_n(x)\right)^2 \left|u_{n,0}(x)\right|^2 dx \leq\|u_{n,0}\|_{L^2(\T_n)}^2.
\end{align}

Splitting $\mathring\phi^k_n u_{n,0}$ in frequencies we get
\begin{align*}
    \mathring\phi^k_n u_{n,0}= P^{L_n}_{>\frac{m_n}{8}}\left(\mathring\phi^k_n u_{n,0}\right)&+ P^{L_n}_{\leq \frac{m_n}{8}}\left[ \left(P^{L_n}_{\leq \frac{m_n}{8}}\mathring\phi^k_n\right) u_{n,0}\right]+  P^{L_n}_{\leq \frac{m_n}{8}}\left[ \left(P^{L_n}_{>8 M_n}\mathring\phi^k_n\right) u_{n,0}\right] \\
    &+  P^{L_n}_{\leq \frac{m_n}{8}}\left[ \left(P^{L_n}_{\frac{m_n}{8}<\dots \leq 8M_n}\mathring\phi^k_n\right) u_{n,0}\right].
\end{align*}
Since $u_{n,0}$ is supported on $\{\xi:\frac{m_n}{2}\leq |\xi|\leq 4M_n\}$ and $P^{L_n}_{\leq \frac{m_n}{8}}\mathring\phi^k_n$, $P^{L_n}_{>8 M_n}\mathring\phi^k_n$ are supported on $\{\xi:|\xi|\leq \frac{m_n}{4}\}$ and $\{\xi: |\xi|> 8M_n\}$ respectively, we have that $ \left(P^{L_n}_{\leq \frac{m_n}{8}}\mathring\phi^k_n\right) u_{n,0}$ and $\left(P^{L_n}_{>8 M_n}\mathring\phi^k_n\right) u_{n,0}$ must be supported on $\{\xi:|\xi|\geq \frac{m_n}{4} \}$, hence 
\begin{align*}
     P^{L_n}_{\leq \frac{m_n}{8}}\left[ \left(P^{L_n}_{\leq \frac{m_n}{8}}\mathring\phi^k_n\right) u_{n,0}\right]=  P^{L_n}_{\leq \frac{m_n}{8}}\left[ \left(P^{L_n}_{>8 M_n}\mathring\phi^k_n\right) u_{n,0}\right]=0.
\end{align*}
Then
\begin{align}
\label{eq:sum}
    \sum_{k=0}^{N_n-1} \left\|\mathring\phi^k_n u_{n,0}\right\|_{\dot H^{-\frac 1 2}(\T_n)}^2\leq & 2\sum_{k=0}^{N_n-1} \left\|P^{L_n}_{>\frac{m_n}{8}}\left(\mathring\phi^k_n u_{n,0}\right)\right\|_{\dot H^{-\frac 1 2}(\T_n)}^2\notag\\
    &+ 2\sum_{k=0}^{N_n-1} \left\|P^{L_n}_{\leq \frac{m_n}{8}}\left[ \left(P^{L_n}_{\frac{m_n}{8}<\dots \leq 8M_n}\mathring\phi^k_n\right) u_{n,0}\right]\right\|_{\dot H^{-\frac 1 2}(\T_n)}^2
\end{align}
and
\begin{align}
\label{eq:sum2}
    \sum_{k=0}^{N_n-1} \left\|\mathring\phi^k_n u_{n,0}\right\|_{\dot H^{-1}(\T_n)}^2\leq & 2\sum_{k=0}^{N_n-1} \left\|P^{L_n}_{>\frac{m_n}{8}}\left(\mathring\phi^k_n u_{n,0}\right)\right\|_{\dot H^{-1}(\T_n)}^2\notag\\
    &+ 2\sum_{k=0}^{N_n-1} \left\|P^{L_n}_{\leq \frac{m_n}{8}}\left[ \left(P^{L_n}_{\frac{m_n}{8}<\dots \leq 8M_n}\mathring\phi^k_n\right) u_{n,0}\right]\right\|_{\dot H^{-1}(\T_n)}^2.
\end{align}

For the first term in the right-hand side of both inequalities we observe that by Bernstein and \eqref{L2}
\begin{align} \label{high -1/2}
    \sum_{k=0}^{N_n-1}  \left\|P^{L_n}_{>\frac{m_n}{8}}\left(\mathring\phi^k_n u_{n,0}\right)\right\|_{\dot H^{-\frac 1 2}(\T_n)}^2  &\lesssim m_n^{-1}\sum_{k=0}^{N_n-1} \left\|\mathring\phi^k_n u_{n,0}\right\|_{L^2(\T_n)}^2 \notag\\
    &\leq m_n^{-1}\left\|u_{n,0}\right\|_{L^2(\T_n)}^2 = m_n^{-1}\left\|P^{L_n}_{\leq 2M_n}u_{n,0}\right\|_{L^2(\T_n)}^2\notag\\
    &\lesssim m_n^{-1} M_n\left\|u_{n,0}\right\|_{\dot H^{-\frac 1 2}(\T_n)}^2
\end{align}
and similarly
\begin{align} \label{high -1}
    \sum_{k=0}^{N_n-1}  \left\|P^{L_n}_{>\frac{m_n}{8}}\left(\mathring\phi^k_n u_{n,0}\right)\right\|_{\dot H^{-1}(\T_n)}^2 \lesssim m_n^{-2} M_n\left\|u_{n,0}\right\|_{\dot H^{-\frac 1 2}(\T_n)}^2.
\end{align}

For the second term, taking advantage of the fact that $\mathring\phi_n^k$ is a translation of $\mathring \phi_n^0$ for every $k$ and by Cauchy-Schwarz inequality,
\begin{align*}
    &\sum_{k=0}^{N_n-1} \left\|P^{L_n}_{\leq \frac{m_n}{8}}\left[ \left(P^{L_n}_{\frac{m_n}{8}<\dots \leq 8M_n}\mathring\phi^k_n\right) u_{n,0}\right]\right\|_{\dot H^{-\frac 1 2}(\T_n)}^2\\
    &\leq \sum_{k=0}^{N_n-1} L_n \sum_{l\in \frac{1}{L_n}\mathbb Z, 0<|l|\leq \frac{m_n}{4}} |l|^{-1} \left| \sum_{m\in \frac{1}{L_n}\mathbb Z} \widehat{\left(P^{L_n}_{\frac{m_n}{8}<\dots \leq 8M_n}\mathring\phi^k_n\right)}(m) \hat u_{n,0}(l-m) \right|^2\\
    & \leq L_n N_n \sum_{l\in \mathcal A_n^1} |l|^{-1}  \sum_{m,j\in \mathcal A_n^2, j-m\in \frac{N_n}{L_n}\mathbb Z} |\hat {\mathring\phi}_n^0(m)||\hat {\mathring\phi}_n^0(j)| |\hat u_{n,0}(l-m)||\hat u_{n,0}(l-j)| \\
    &\leq \sum_{s\in\mathcal B_n} L_n N_n \sum_{l\in \mathcal A_n^1} |l|^{-1}  \sum_{m, m+s\in \mathcal A_n^2} |\hat {\mathring\phi}_n^0(m)||\hat {\mathring\phi}_n^0(m+s)| |\hat u_{n,0}(l-m)||\hat u_{n,0}(l-m-s)|\\
    &\leq \sum_{s\in\mathcal B_n} L_n N_n \sum_{l\in \mathcal A_n^1} |l|^{-1}  \sum_{m\in \mathcal A_n^2} |\hat {\mathring\phi}_n^0(m)|^2 |\hat u_{n,0}(l-m)|^2\\
    &\lesssim M_n L_n^2 \sum_{l\in \mathcal A_n^1} |l|^{-1}  \sum_{m\in \mathcal A_n^2} |\hat {\mathring\phi}_n^0(m)|^2 |\hat u_{n,0}(l-m)|^2\\
    &= M_n L_n^2 \sum_{(k,m)\in \mathcal C_n} |k+m|^{-1}  |\hat {\mathring\phi}_n^0(m)|^2 |\hat u_{n,0}(k)|^2
\end{align*}
and
\begin{align*}
    &\sum_{k=0}^{N_n-1} \left\|P^{L_n}_{\leq \frac{m_n}{8}}\left[ \left(P^{L_n}_{\frac{m_n}{8}<\dots \leq 8M_n}\mathring\phi^k_n\right) u_{n,0}\right]\right\|_{\dot H^{-1}(\T_n)}^2\\
    &\lesssim M_n L_n^2 \sum_{l\in \mathcal A_n^1} |l|^{-2}  \sum_{m\in \mathcal A_n^2} |\hat {\mathring\phi}_n^0(m)|^2 |\hat u_{n,0}(l-m)|^2\\
    &= M_n L_n^2 \sum_{(k,m)\in \mathcal C_n} |k+m|^{-2}  |\hat {\mathring\phi}_n^0(m)|^2 |\hat u_{n,0}(k)|^2
\end{align*}
where $\mathcal A_n^1:=\{l\in\frac{1}{L_n}\mathbb Z: 0<|l|\leq \frac{m_n}{4}\}$, $\mathcal A_n^2:=\{m\in \frac{1}{L_n}\mathbb Z: \frac{m_n}{8}\leq|m|\leq 16M_n\}$, $\mathcal B_n:=\{s\in\frac{N_n}{L_n}\mathbb Z:  |s|\leq 32 M_n\}$, $\mathcal C_n:=\{(k,m)\in \frac{1}{L_n}\mathbb Z^2:\frac{m_n}{2}\leq|k|\leq 4M_n, \frac{m_n}{8}\leq|m|\leq 16M_n, 0<|k+m|\leq \frac{m_n}{4} \}$.
Taking advantage of the bounds that all $(k,m)\in\mathcal C_n$ must satisfy as well as the fact that $u_{n,0}\in\dot H^{-\frac 1 2}(\T_n)$, $\mathring\phi_n^0\in\dot H^2(\T_n)$ and $\mathring\phi_n^0\in\dot H^3(\T_n)$ with $\|\mathring\phi_n^0\|_{\dot H^2(\T_n)}\lesssim \left(\frac{N_n}{L_n}\right)^{\frac 3 2}$, $\|\mathring\phi_n^0\|_{\dot H^3(\T_n)}\lesssim \left(\frac{N_n}{L_n}\right)^{\frac 5 2}$,
\begin{align} \label{low -1/2}
    \sum_{k=0}^{N_n-1} & \left\|P^{L_n}_{\leq \frac{m_n}{8}}\left[ \left(P^{L_n}_{\frac{m_n}{8}<\dots \leq 8M_n}\mathring\phi^k_n\right) u_{n,0}\right]\right\|_{\dot H^{-\frac 1 2}(\T_n)}^2\notag\\
    &\lesssim M_n L_n^2 \sum_{k\in \frac{1}{L_n}\mathbb Z}\sum_{m\in \frac{1}{L_n}\mathbb Z} L_n (M_n |k|^{-1})(m_n^{-4} |m|^4) |\hat {\mathring\phi}_n^0(m)|^2 |\hat u_{n,0}(k)|^2\notag\\
    &= M_n^2 m_n^{-4} L_n \left(L_n\sum_{k\in \frac{1}{L_n}\mathbb Z}|k|^{-1} |\hat u_{n,0}(k)|^2 \right) \left(L_n \sum_{m\in \frac{1}{L_n}\mathbb Z}  |m|^4 |\hat {\mathring\phi}_n^0(m)|^2\right) \notag\\
    & =M_n^2 m_n^{-4} L_n \|u_{n,0}\|_{\dot H^{-\frac 1 2}(\T_n)}^2 \|\mathring\phi_n^0\|_{\dot H^2(\T_n)}^2\notag\\
    &\lesssim M_n^2 m_n^{-4}N_n^3 L_n^{-2} \|u_{n,0}\|_{\dot H^{-\frac 1 2}(\T_n)}^2
\end{align}
and similarly
\begin{align} \label{low -1}
    \sum_{k=0}^{N_n-1} & \left\|P^{L_n}_{\leq \frac{m_n}{8}}\left[ \left(P^{L_n}_{\frac{m_n}{8}<\dots \leq 8M_n}\mathring\phi^k_n\right) u_{n,0}\right]\right\|_{\dot H^{-1}(\T_n)}^2\notag\\
    &\lesssim M_n^2 m_n^{-6}N_n^5 L_n^{-3} \|u_{n,0}\|_{\dot H^{-\frac 1 2}(\T_n)}^2
\end{align}
Combining estimates \eqref{high -1/2} and \eqref{low -1/2} with \eqref{eq:sum} and estimates \eqref{high -1} and \eqref{low -1} with \eqref{eq:sum2}, and assuming that $n$ is sufficiently large so that $L_n\gg m_n^{-2} M_n N_n^{2}$,
\begin{align*}
    \sum_{k=0}^{N_n-1} \left\|\mathring\phi^k_n u_{n,0}\right\|_{\dot H^{-\frac 1 2}(\T_n)}^2 \leq C m_n^{-1}M_n \|u_{n,0}\|_{\dot H^{-\frac 1 2}(\T_n)}^2
\end{align*}
and
\begin{align*}
    \sum_{k=0}^{N_n-1} \left\|\mathring\phi^k_n u_{n,0}\right\|_{\dot H^{-1}(\T_n)}^2 \leq C m_n^{-2}M_n \|u_{n,0}\|_{\dot H^{-\frac 1 2}(\T_n)}^2
\end{align*}
where the constant $C$ does not depend on $n$. This implies that for at least $\frac{9}{10}N_n$ of the integers $k\in \{0,\dots, N_n-1\}$
\begin{align*}
    \left\|\mathring\phi^k_n u_{n,0}\right\|_{\dot H^{-\frac 1 2}(\T_n)} \leq \sqrt{10C} m_n^{-\frac 1 2} M_n^{\frac 1 2}  N_n^{-\frac 1 2}\|u_{n,0}\|_{\dot H^{-\frac 1 2}(\T_n)}
\end{align*}
and similarly for at least $\frac{9}{10}N_n$ of the integers $k\in \{0,\dots, N_n-1\}$
\begin{align*}
    \left\|\mathring\phi^k_n u_{n,0}\right\|_{\dot H^{-1}(\T_n)} \leq \sqrt{10C} m_n^{-1} M_n^{\frac 1 2}  N_n^{-\frac 1 2}\|u_{n,0}\|_{\dot H^{-\frac 1 2}(\T_n)}.
\end{align*}
\end{proof}


Now we consider
\begin{align*}
    S_1&:=\left\{0\leq k\leq N_n-1: \left\|\mathring\phi^k_n u_{n,0}\right\|_{\dot H^{-\frac 1 2}(\T_n)}<C m_n^{-\frac 1 2} M_n^{\frac 1 2}  N_n^{-\frac 1 2}\|u_{n,0}\|_{\dot H^{-\frac 1 2}(\T_n)}\right\},\\
    S_2&:=\left\{0\leq k\leq N_n-1: \left\|\mathring\phi^k_n u_{n,0}\right\|_{\dot H^{-1}(\T_n)}<C m_n^{-1} M_n^{\frac 1 2}  N_n^{-\frac 1 2}\|u_{n,0}\|_{\dot H^{-\frac 1 2}(\T_n)}\right\},\\
    S_3&:=\{k\in S_1\cap S_2: \dist(0,\supp (\phi_n^k))>10 \tfrac{L_n}{N_n}\},\\
    S_4&:= \{k\in S_3: k+1\in S_3\}, 
\end{align*}
where the constant $C$ in the definition of $S_1$ and $S_2$ is inherited from the Lemma above.

In the light of Lemma \ref{bump}, $S_1$ and $S_2$ consist of at least $\frac{9}{10}N_n$ elements, therefore $S_1\cap S_2$ consists of at least $\frac{8}{10}N_n$ elements. By restricting ourselves to bumps $\phi_n^k$ that are supported at least $\frac{10 L_n}{N_n}$ away from $0$, we have to remove at most 30 elements, so we conclude that $S_3$ has at least $\frac{7}{10} N_n$ elements (assuming $n$ is large enough). However, we cannot conclude that $\mathring\phi^k_n u_{n,0}\in\dot H^{-\frac 1 2}(\T_n)$ for $k\in S_3$; we also need $\int\mathring\phi^k_n u_{n,0}=0$, which is not necessarily true. 

To remedy that we work as follows: If there exists one $k_0\in S_3$ so that $\int\mathring\phi^{k_0}_n u_{n,0}=0$, we choose 
\begin{align}\label{case1}
    \varphi_n= \phi^{k_0}_n.
\end{align} 
If not, we look at $S_4$. Since $S_3$ contains at least $\frac{7}{10}$ of the integers between $0$ and $N_n-1$, it must contain at least two consecutive integers, in other words there exists $k_1\in S_4$. If $0<|\int\mathring\phi^{k_1}_n u_{n,0}|\leq |\int\mathring\phi^{k_1+1}_n u_{n,0}|$, we chose 
\begin{align}\label{case2}
    \varphi_n:= \phi_n^{k_1}- \frac {\int\mathring\phi^{k_1}_n u_{n,0}}{\int\mathring\phi^{k_1+1}_n u_{n,0}}\phi_n^{k_1+1},
\end{align}
otherwise we choose
\begin{align}\label{case3}
    \varphi_n:= \phi_n^{k_1+1}- \frac {\int\mathring\phi^{k_1+1}_n u_{n,0}}{\int\mathring\phi^{k_1}_n u_{n,0}} \phi_n^{k_1}.
\end{align}
In the following we will denote the $L_n$-periodization of $\varphi_n$ by $\mathring{\varphi_n}$. One can see that $\int\mathring\varphi_n u_{n,0}= 0$,
\begin{align} \label{small -1/2}
    \left\|\mathring\varphi_n u_{n,0}\right\|_{\dot H^{-\frac 1 2}(\T_n)}&\leq \left\|\mathring\phi^{k_1}_n u_{n,0}\right\|_{\dot H^{-\frac 1 2}(\T_n)}+ \left\|\mathring\phi^{k_1+1}_n u_{n,0}\right\|_{\dot H^{-\frac 1 2}(\T_n)}\notag\\
    &\lesssim m_n^{-\frac 1 2} M_n^{\frac 1 2}  N_n^{-\frac 1 2}\|u_{n,0}\|_{\dot H^{-\frac 1 2}(\T_n)}
\end{align}
and
\begin{align} \label{small -1}
    \left\|\mathring\varphi_n u_{n,0}\right\|_{\dot H^{-1}(\T_n)}\lesssim m_n^{-1} M_n^{\frac 1 2}  N_n^{-\frac 1 2}\|u_{n,0}\|_{\dot H^{-\frac 1 2}(\T_n)}.
\end{align}


\subsection{`Unwrapping' on the line}

Now we consider the following smooth compactly supported functions. We start by defining
\begin{align*}
    \chi_n^1(x):= \sum_{k=0}^{N_n-1}\phi_n^k(x).
\end{align*}
Then we take $\chi_n^2$ to be a translation of $\chi_n^1$, defined by
\begin{align*}
\chi_n^2(x):=
\begin{cases}
    \chi_n^1(x-k_0 \tfrac{L_n}{N_n})\qquad &\text{if}\,\,\varphi_n\,\,\text{is given by}\,\,\eqref{case1},\\
    \chi_n^1(x-k_1 \tfrac{L_n}{N_n})\qquad &\text{if}\,\,\varphi_n\,\,\text{is given by}\,\,\eqref{case2},\\
    \chi_n^1(x-(k_1+2) \tfrac{L_n}{N_n})\qquad &\text{if}\,\,\varphi_n\,\,\text{is given by}\,\,\eqref{case3},\\
    \end{cases}
\end{align*}
and
\begin{align*}
    \chi_n^0(x):= 
    \begin{cases}
        (\chi_n^2-\varphi_n)(x)\qquad & \text{if}\,\,0\,\,\text{is on the right of the support of}\,\,\varphi_n,\\
        (\chi_n^2-\varphi_n)(x+L_n)\qquad & \text{if}\,\,0\,\,\text{is on the left of the support of}\,\,\varphi_n.
    \end{cases}
\end{align*}
Observe that by construction the support of $\chi_n^0$ is connected and contains $0$. By \eqref{partition}, their $L_n$-periodizations satisfy $\mathring \chi_n^0+\mathring \varphi_n=1$. We take the initial data 
\begin{align*}
    q_{n,0}:= \chi_n^0 u_{n,0}\in C_c^\infty(\mathbb R),
\end{align*}
which is supported inside an interval of length $L_n$, and its $L_n$-periodization
$$\mathring q_{n,0}= \mathring \chi_n^0 u_{n,0}.$$
Then
\begin{align*}
    \int_{\R} q_{n,0} =\int_{\T_n} \mathring\chi_n^0 u_{n,0}= \int_{\T_n} u_{n,0} - \int_{\T_n} \mathring\varphi_n u_{n,0}=0.
\end{align*}
By Lemma \ref{periodization}, the definition of $q_{n,0}$, and \eqref{small -1/2}, it is clear that for $n$ sufficiently large and assuming that $N_n\ll m_n^{-1} M_n$
\begin{align} \label{eq:H-1 bound}
    \|q_{n,0}\|_{H^{-1}(\mathbb R)}=  \|\mathring q_{n,0}\|_{H^{-1}(\mathbb T_n)}\leq \|u_{n,0}\|_{H^{-1}(\mathbb T_n)} + \|\mathring \varphi_n u_{n,0}\|_{H^{-1}(\mathbb T_n)}\leq 2 A<\delta_0.
\end{align}
Moreover, using \eqref{small -1} and the fact that $0\leq \varphi_n\leq 1$ we get that
\begin{align} \label{eq:dot -1 bound}
    \|q_{n,0}\|_{\dot H^{-1}(\mathbb R)}& \leq \|u_{n,0}\|_{\dot H^{-1}(\mathbb T_n)} + \|\mathring \varphi_n u_{n,0}\|_{\dot H^{-1}(\mathbb T_n)}\notag\\
    &\lesssim m_n^{-\frac 12}  \|u_{n,0}\|_{\dot H^{-\frac 1 2}(\mathbb T_n)}+ m_n^{-1}M_n^{\frac 12} N_n^{-\frac 12} \|u_{n,0}\|_{\dot H^{-\frac 1 2}(\mathbb T_n)}\notag\\
    &\lesssim m_n^{-\frac 12}\|u_{n,0}\|_{\dot H^{-\frac 1 2}(\mathbb T_n)}
\end{align}
and
\begin{align} \label{eq:L2 bound}
    \|q_{n,0}\|_{L^2(\mathbb R)}& \leq \|u_{n,0}\|_{L^2(\mathbb T_n)} + \|\mathring \varphi_n u_{n,0}\|_{L^2(\mathbb T_n)}\leq 2\|u_{n,0}\|_{L^2(\mathbb T_n)}\notag\\
    &\lesssim M_n^{\frac 12}  \|u_{n,0}\|_{\dot H^{-\frac 1 2}(\mathbb T_n)}.
\end{align}
Estimates \eqref{eq:dot -1 bound} and \eqref{eq:L2 bound} imply
\begin{align*}
    \|q_{n,0}\|_{\dot H^{-\frac 1 2}(\mathbb R)} ^2&\leq \|q_{n,0}\|_{\dot H^{-1}(\mathbb R)} \|q_{n,0}\|_{L^2(\mathbb R)}\lesssim m_n^{-\frac 12} M_n^{\frac 12} A^2
\end{align*}
so, indeed, $q_{n,0}\in \dot H^{-\frac 12}(\mathbb R)$. Our next goal is to obtain a uniform bound.


\begin{lemma}\label{H-1/2 bound} For $n$ sufficiently large $q_{n,0}\in \dot H^{-\frac 1 2}(\mathbb R)$ and
$$\|q_{n,0}\|_{\dot H^{-\frac 1 2}(\mathbb R)} \lesssim A,$$
where the implicit constant does not depend on $n$.
\end{lemma}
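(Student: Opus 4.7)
The plan is to bound $\|q_{n,0}\|_{\dot H^{-\frac{1}{2}}(\mathbb R)}$ in two stages: first a bound on the torus $\dot H^{-\frac{1}{2}}(\mathbb T_n)$ norm of the periodized function $\mathring q_{n,0}$, and then a uniform transfer of that estimate back to the line.

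On the torus the strategy mirrors what was already done for \eqref{eq:dot -1 bound} and \eqref{eq:L2 bound}: the identity $\mathring q_{n,0} = u_{n,0} - \mathring\varphi_n u_{n,0}$ together with the triangle inequality gives
\begin{align*}
  \|\mathring q_{n,0}\|_{\dot H^{-\frac{1}{2}}(\mathbb T_n)} \leq \|u_{n,0}\|_{\dot H^{-\frac{1}{2}}(\mathbb T_n)} + \|\mathring\varphi_n u_{n,0}\|_{\dot H^{-\frac{1}{2}}(\mathbb T_n)} \leq A + C\, m_n^{-\frac{1}{2}} M_n^{\frac{1}{2}} N_n^{-\frac{1}{2}} A,
\end{align*}
where the first term is at most $A$ by hypothesis and the second is precisely the content of \eqref{small -1/2}. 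Since the parameters are chosen so that $N_n \gg m_n^{-2} M_n \geq m_n^{-1} M_n$, the prefactor in the second summand is $o(1)$, so $\|\mathring q_{n,0}\|_{\dot H^{-\frac{1}{2}}(\mathbb T_n)} \lesssim A$.

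For the transfer back to the line, the key observation is that $q_{n,0}$ is smooth, has mean zero, and is supported in an interval of length strictly less than the period $L_n$. Lemma~\ref{periodization} establishes that the periodization map is isometric at the two integer endpoints $L^2$ and $\dot H^{-1}$ for such functions. Complex interpolation between these two isometries is expected to produce the uniform comparison $\|q_{n,0}\|_{\dot H^{-\frac{1}{2}}(\mathbb R)} \lesssim \|\mathring q_{n,0}\|_{\dot H^{-\frac{1}{2}}(\mathbb T_n)}$, which combined with the first step yields the stated bound.

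The delicate part is this transfer, since Lemma~\ref{periodization} only provides a limit as $L\to\infty$ (rather than a quantitative uniform bound) at the fractional order $s = -\tfrac{1}{2}$, so the interpolation must be set up at the operator level and checked to be compatible with the subspace of compactly supported mean-zero functions whose support fits in one fundamental period. An alternative avenue is to note that $\|\mathring q_{n,0}\|_{\dot H^{-\frac{1}{2}}(\mathbb T_n)}^2 = L_n^{-1}\sum_{k \neq 0}|k|^{-1}|\hat q_{n,0}(k)|^2$ is precisely the Riemann sum associated to $\int_{\mathbb R}|\xi|^{-1}|\hat q_{n,0}(\xi)|^2\, d\xi$, and to control the difference by exploiting the mean-zero condition $\hat q_{n,0}(0) = 0$, which removes the singularity of $|\xi|^{-1}$ at the origin, together with the large-period hypothesis $L_n \gg N_n^5$ and the crude bounds \eqref{eq:dot -1 bound}, \eqref{eq:L2 bound}.
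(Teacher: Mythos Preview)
Your Step 1 (bounding $\|\mathring q_{n,0}\|_{\dot H^{-1/2}(\mathbb T_n)}\lesssim A$) is correct and is in fact used implicitly in the paper's argument. The difficulty, as you recognise, is the transfer to the line.

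Route (a) has a genuine gap. The periodization isometries at $L^2$ and $\dot H^{-1}$ run from the line to the torus; interpolating the \emph{forward} map gives $\|\mathring q_{n,0}\|_{\dot H^{-1/2}(\mathbb T_n)}\le \|q_{n,0}\|_{\dot H^{-1/2}(\mathbb R)}$, which is the wrong direction. To go the other way you would need to interpolate an \emph{inverse} periodization defined on the full torus spaces, i.e.\ a uniformly bounded linear extension of ``restrict to one period and extend by zero''; but multiplication by a sharp cutoff is not bounded on $\dot H^{-1}(\mathbb T_n)$, so there is no cheap extension, and any smooth-cutoff extension brings back exactly the commutator/kernel issues you are trying to avoid. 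Note also that the only interpolation inequality available from the endpoint norms \emph{of $q_{n,0}$ itself} is the log-convexity bound $\|q_{n,0}\|_{\dot H^{-1/2}}^2\le\|q_{n,0}\|_{L^2}\|q_{n,0}\|_{\dot H^{-1}}\lesssim m_n^{-1/2}M_n^{1/2}A^2$, which the paper records immediately before the lemma precisely to explain why it is \emph{not} uniform and why more work is needed.

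Route (b) is the right idea and is essentially what the paper does, but the key structural step you omit is a three-band frequency decomposition on the line: $q_{n,0}=P_{\le m_n/2}q_{n,0}+P_{m_n/2<\cdots\le 2M_n}q_{n,0}+P_{>2M_n}q_{n,0}$. The low and high bands are handled exactly by Bernstein together with \eqref{eq:dot -1 bound} and \eqref{eq:L2 bound} (this is where those crude bounds enter, not in the Riemann-sum step). The Riemann-sum/kernel comparison is then carried out \emph{only} on the middle band, where the Fourier weight $|\xi|^{-1}$ is multiplied by a smooth compactly supported cutoff $\mu_n^2$, so that the physical-side kernel $K_n$ and its periodic analogue $K_n^{L_n}$ are smooth with quantitative decay; the paper then compares $\|P_{m_n/2<\cdots\le 2M_n}q_{n,0}\|_{\dot H^{-1/2}(\mathbb R)}^2$ and $\|P^{L_n}_{m_n/2<\cdots\le 2M_n}\mathring q_{n,0}\|_{\dot H^{-1/2}(\mathbb T_n)}^2$ via pointwise kernel estimates and uses $L_n\gg N_n^5$ to make the error $o(A^2)$. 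Without isolating the middle band first, the singularity of $|\xi|^{-1}$ at the origin obstructs the comparison; the mean-zero condition alone gives $\hat q_{n,0}(0)=0$ but not the uniform smallness of $|\xi|^{-1}|\hat q_{n,0}(\xi)|^2$ near $0$ that a direct Riemann-sum argument on the full range would require.
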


\begin{proof}

We split $q_{n,0}= P_{\leq \frac {m_n}{2}} q_{n,0}+ P_{\frac {m_n}{2}<\dots\leq 2 M_n} q_{n,0}+P_{> 2M_n} q_{n,0}$. 

For the low frequency part, Bernstein estimates and \eqref{eq:dot -1 bound} give us
\begin{align*}
    \|P_{\leq \frac {m_n}{2}} q_{n,0}\|_{\dot H^{-\frac 1 2}(\mathbb R)}\lesssim m_n^{\frac 1 2}\|q_{n,0}\|_{\dot H^{-1}(\mathbb R)}\lesssim A
\end{align*}
for $n$ sufficiently large.

Bernstein estimates and \eqref{eq:L2 bound} also allow us to control the high frequency part:
\begin{align*}
    \|P_{> 2M_n} q_{n,0}\|_{\dot H^{-\frac 1 2}(\mathbb R)}\lesssim M_n^{-
    \frac 1 2} \| q_{n,0}\|_{L^2(\mathbb R)}\lesssim A.
\end{align*}

Now we turn our attention to the middle frequency part. To this end, we will compare $\left\| P_{\frac {m_n}{2}<\dots\leq 2 M_n} q_{n,0} \right\|_{\dot H^{-\frac 1 2}(\mathbb R)}$ and $\left\| P^{L_n}_{\frac {m_n}{2}<\dots\leq 2 M_n} \mathring q_{n,0} \right\|_{\dot H^{-\frac 1 2}(\mathbb T_n)}$.
We observe that
\begin{align*}
    \left\| P_{\frac {m_n}{2}<\dots\leq 2 M_n} q_{n,0} \right\|_{\dot H^{-\frac 1 2}(\mathbb R)}^2& =\int_{\mathbb R} \int_{\mathbb R} K_n(x-y) q_{n,0}(x) q_{n,0}(y) dx dy,\\
    \left\| P^{L_n}_{\frac {m_n}{2}<\dots\leq 2 M_n} \mathring q_{n,0} \right\|_{\dot H^{-\frac 1 2}(\mathbb T_n)}^2&= \int_{\mathbb R}\int_{\mathbb R} \frac{1}{L_n} K_n^{L_n}(x-y) q_{n,0}(x) q_{n,0}(y) dx dy,
\end{align*}
where $K_n$ and $K_n^{L_n}$ are the inverse Fourier transform (in $\mathbb R$ and $\mathbb T_n$ respectively) of $\mathcal K_n$,
$$\mathcal K_n(\xi):= |\xi|^{-1} \mu_n^2(\xi)= |\xi|^{-1} \left(m(\tfrac{\xi}{2M_n})- m(\tfrac{2 \xi}{m_n}) \right)^2.$$
Note that $\mu_n$, and consequently $\mathcal K_n$, are supported on $\{\xi\in \mathbb R: \frac {m_n}{2}\leq |\xi|\leq 4 M_n\}$. Thus $\mathcal K_n\in C_c^\infty(\mathbb R)$ with
\begin{align*}
    \|\mathcal K_n\|_{L^\infty(\mathbb R)}\lesssim m_n^{-1},\qquad \|\mathcal K_n ^{(\alpha)} \|_{L^\infty(\mathbb R)}\lesssim_{\alpha} m_n^{-(\alpha+1)}\quad\text{for all}\,\,\alpha\in\mathbb N.
\end{align*}
We conclude that $K_n$ satisfies
\begin{align*}
    \left| K_n(x) \right|\lesssim M_n m_n^{-1} , \quad \left| K_n(x) \right|\lesssim M_n m_n^{-4} |x| ^{-3}\quad\text{for all}\,\, x\in \mathbb R.
\end{align*}
By Cauchy-Schwarz and Minkowski inequalities in combination with the above estimate and \eqref{eq:L2 bound}, we get that
\begin{align}
\label{eq:largeR}
    \left| \iint_{\{\dist(x-y,L_n\mathbb Z )>A_n\}}  K_n(x-y) q_{n,0}(x) q_{n,0}(y) dx dy  \right| &\lesssim  \| K_n\|_{L^1(\{|x|>A_n\})}    \|q_{n,0}\|_{L^2(\mathbb R)}^2\notag\\
    &\lesssim M_n m_n^{-4} A_n^{-2} \|q_{n,0}\|_{L^2(\mathbb R)}^2\notag\\
    & \lesssim M_n^2 m_n^{-4} A_n^{-2} A^2.
\end{align}
On the other hand, a similar argument allows us to estimate the inverse Fourier transform of $(\mathcal K_n(k))_{k\in\frac{1}{L_n} \mathbb Z}$:
\begin{align*}
    K_n^{L_n}(x)&= \sum_{k\in\frac{1}{L_n} \mathbb Z} \mathcal K_n(k) e^{2\pi i xk}\\
    & =  \sum_{k\in\frac{1}{L_n} \mathbb Z} \mathcal K_n(k) \frac{e^{2\pi i x(k+\frac{3}{L_n})}- 3 e^{2\pi i x(k+\frac{2}{L_n})} +3 e^{2\pi i x(k+\frac{1}{L_n})}- e^{2\pi i xk}}{(e^{2\pi i x \frac{1}{L_n}}-1)^3}\\
    & = \sum_{k\in\frac{1}{L_n} \mathbb Z} \frac{ \mathcal K_n(k-\tfrac{3}{L_n}) -3 \mathcal K_n(k-\tfrac{2}{L_n}) +3 \mathcal K_n(k-\tfrac{1}{L_n})- \mathcal K_n(k)}{(e^{2\pi i x \frac{1}{L_n}}-1)^3} e^{2\pi i xk}.
\end{align*}
Since
\begin{align*}
    \left| e^{2\pi i x \frac{1}{L_n}}-1 \right| \gtrsim \frac{|x|}{L_n} \quad\text{for}\quad |x|<\frac{L_n}{2}
\end{align*}
and
\begin{align*}
    \left| \mathcal K_n(k-\tfrac{3}{L_n}) -3 \mathcal K_n(k-\tfrac{2}{L_n}) +3 \mathcal K_n(k-\tfrac{1}{L_n})- \mathcal K_n(k) \right| \lesssim \frac{\|\mathcal K_n^{(3)}\|_{L^\infty}}{L_n^{3} }\lesssim  \frac {m_n^{-4}}{L_n^{3}},
\end{align*}
\begin{align*}
    \left| K_n^{L_n}(x) \right|\lesssim L_n M_n m_n^{-4} |x|^{-3}\quad\text{for}\,\,|x|<\frac{L_n}{2}.
\end{align*}
We also have that
\begin{align*}
    \left| K_n^{L_n}(x) \right|\lesssim L_n M_n \|\mathcal K_n\|_{L^\infty}\lesssim L_n M_n m_n^{-1}\quad\text{for all}\,\,x.
\end{align*}
Working similarly as in \eqref{eq:largeR}, the estimates for $K_n^{L_n}$ we just obtained allow us to get
\begin{align}
\label{eq:largeT}
    \left| \iint_{\{\dist(x-y,L_n \mathbb Z)>A_n\}} \frac{1}{L_n} K_n^{L_n}(x-y) q_{n,0}(x) q_{n,0}(y) dx dy \right| \lesssim M_n^2 m_n^{-4} A_n^{-2} A^2.
\end{align}
In the region $\{\dist(x-y,L_n\mathbb Z)\leq A_n\}$ the kernels are not small; however, we will show that they are close to each other, as expected. Indeed, for $x$ such that $\dist(x,L_n\mathbb Z)\leq A_n$
\begin{align*}
    \left| K_n(x)-  \frac {1}{L_n}K_n^{L_n}(x) \right|&= \left| \int_{\mathbb R} \mathcal K_n(\xi) e^{2 \pi i x \xi}  d\xi  - \frac{1}{L_n}\sum_{k\in\frac{1}{L_n} \mathbb Z} \mathcal K_n(k) e^{2\pi i xk} \right| \\
    &= \left| \sum_{k\in\frac{1}{L_n}\mathbb Z} \left[ \int_{I_k(\tfrac{1}{L_n})} \mathcal K_n(\xi) e^{2 \pi i x \xi}  d\xi  - \frac{1}{L_n}  \mathcal K_n(k) e^{2\pi i xk} \right] \right|\\
    &\lesssim \sum_{k\in\frac{1}{L_n}\mathbb Z} L_n^{-2} \|\left( \mathcal K_n(\xi) e^{2 \pi i x \xi} \right)''\|_{L^\infty(I_k(\tfrac{1}{L_n}))}\\
    &\lesssim L_n^{-1} M_n m_n^{-1} A_n^2
\end{align*}
where $I_k(\tfrac{1}{L_n})=\left[k-\tfrac{1}{2L_n},k+\tfrac{1}{2L_n}\right)$. Then the same argument as before yields
\begin{align}
\label{eq:R-T}
    \left| \iint_{\{\dist(x-y, L_n\mathbb Z)\leq A_n\}} \left[K_n(x-y)-  \frac {1}{L_n}K_n^{L_n}(x-y)\right]  q_{n,0}(x) q_{n,0}(y) dx dy \right| \lesssim    \notag\\
    \lesssim L_n^{-1} M_n^2 m_n^{-1} A_n^3 A^2.
\end{align}
By \eqref{eq:largeR}, \eqref{eq:largeT} and \eqref{eq:R-T}, 
\begin{align*}
    \left\| P_{\frac {m_n}{2}<\dots\leq 2 M_n} q_{n,0} \right\|_{\dot H^{-\frac 1 2}(\mathbb R)}^2-
    \left\| P^{L_n}_{\frac {m_n}{2}<\dots\leq 2 M_n} \mathring q_{n,0} \right\|_{\dot H^{-\frac 1 2}(\mathbb T_n)}^2\\
    \lesssim M_n^2 m_n^{-4} (A_n^{-2}+ L_n^{-1} A_n^3)A^2,
\end{align*}
therefore, for $n$ sufficiently large and given that $M_n  m_n^{-2} \ll A_n\ll L_n^{\frac 12} M_n^{-1} m_n^{\frac 1 2}$,
\begin{align*}
    \left\| P_{\frac {m_n}{2}<\dots\leq 2 M_n} q_{n,0} \right\|_{\dot H^{-\frac 1 2}(\mathbb R)}\lesssim A.
\end{align*}

\end{proof}

\subsection{Local behavior}

Having established a uniform bound for the sequence $q_{n,0}$ in $\dot H^{-\frac 12}(\mathbb R)$, Theorem \ref{Hk wp} provides us with solutions $q_n$ to \ref{eq:Hk} with initial data $q_{n,0}$. 
Furthermore,
\begin{align*}
    \|q_n(t)\|_{\dot H^{-\frac 1 2}(\R)} \lesssim \|q_{n,0}\|_{\dot H^{-\frac 1 2}(\R)} \lesssim A\quad\text{for all}\,\,t\in[0,T].
\end{align*}

By construction the new sequence of initial data $q_{n,0}$ is close to $u_{n,0}$ in the sense that
\begin{align*}
    \|u_{n,0}-\mathring q_{n,0}\|_{\dot H^{-\frac 12}(\mathbb T_n)}\to 0\qquad\text{as}\,\, n\to\infty.
\end{align*}
Next, we will show that $q_n(t)$ stays close to the solution $u_n(t)$ to \eqref{eq:Hkn} with initial data $u_{n,0}$ on the time interval $[0,T]$. This is understood in the sense that $q_n(t)$ locally, around the support of $q_{n,0}$, behaves similarly to $u_n(t)$. Below we formulate a more precise statement and study this local behavior.

Assuming that $\chi_n^0$ is supported on the interval $[a_n, b_n]$, we consider a smooth compactly supported function $\chi_n^\ast$ such that
\begin{align*}
    \chi_n^\ast(x)=
    \begin{cases}
        1,\qquad &x\in [a_n-\tfrac{L_n}{20N_n}, b_n +\tfrac{L_n}{20N_n}]\\
        0,\qquad &x\notin [a_n-\tfrac{L_n}{10N_n}, b_n +\tfrac{L_n}{10N_n}],
    \end{cases}
\end{align*}
as well as its $L_n$-periodization
\begin{align*}
    \mathring \chi_n^\ast(x)=\sum_{k\in \mathbb Z} \chi_n^\ast(x+k L_n).
\end{align*}
We also define
\begin{align*}
    f_n(t)&:= \chi_n^\ast q_n(t),\\
    \mathring f_n(t,x)&= \sum_{k\in \mathbb Z} f_n(t, x+k L_n).
\end{align*}
Note that by the definition $\chi_n^0 (1-\chi_n^\ast)=0$, so $f_n(0)=q_{n,0}$, $u_{n,0}-\mathring f_n(0)= u_{n,0}\mathring \varphi_n$.

\begin{lemma}
\label{locality}
Let $\chi$ be smooth with $\chi'\in C_c^\infty(\R)$, $q\in H^{-1}(\R)$ with $\|q\|_{H^{-1}}<\min(\delta,1)$. Then 
\begin{align*}
    \|\chi g'(q)\|_{L^2(\R)} \lesssim  \|\chi q\|_{H^{-1}(\R)}+\|\chi'\|_{L^2 (\R)} + \|\chi'\|_{L^\infty(\R)}. 
\end{align*}
\end{lemma}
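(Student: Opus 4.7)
The starting point is the integral identity for $g'$ derived from the Gel'fand--Dikii equation $g''' = 2q'g + 4(q+\kappa^2)g'$: after inversion one has
\[
g' = -2\partial_x \tilde R_0(qg) - 2\tilde R_0(qg'),\qquad \tilde R_0 := (-\partial_x^2+4\kappa^2)^{-1},
\]
where $\tilde R_0$ has integral kernel $\tfrac{1}{4\kappa}e^{-2\kappa|\cdot|}$. Throughout, Proposition \ref{diffeo R} gives $\|g - \tfrac{1}{2\kappa}\|_{H^1} \lesssim \|q\|_{H^{-1}} \lesssim 1$, so $h := g - \tfrac{1}{2\kappa} \in H^1 \cap L^\infty$ with norms bounded by an absolute constant. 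Multiplying the identity by $\chi$ and introducing commutators yields
\[
\chi g' = -2\partial_x \tilde R_0(\chi qg) - 2\tilde R_0(\chi qg') - 2[\chi,\partial_x \tilde R_0](qg) - 2[\chi,\tilde R_0](qg'),
\]
and the plan is to estimate the two ``main'' terms by $\|\chi q\|_{H^{-1}}$ and the two commutator terms by $\|\chi'\|_{L^2}+\|\chi'\|_{L^\infty}$.

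\textbf{Main terms.} The operator $\partial_x\tilde R_0$ is bounded $H^{-1}\to L^2$ (its Fourier symbol $i\xi/(\xi^2+4\kappa^2)$ paired with the $H^{-1}$ weight $(1+\xi^2)^{1/2}$ is uniformly bounded), and in one dimension multiplication by $g$ preserves $H^{-1}$ (by duality with the algebra property $H^1\cdot H^1\hookrightarrow H^1$), giving $\|\partial_x\tilde R_0(\chi qg)\|_{L^2}\lesssim \|\chi qg\|_{H^{-1}}\lesssim \|\chi q\|_{H^{-1}}$. For the second main term $\tilde R_0(\chi qg') = \tilde R_0(\chi qh')$, rewrite via integration by parts
\[
\chi qh' = \partial_y(\chi qh) - \chi'qh - \chi q'h,
\]
so that $\tilde R_0(\chi qh') = \partial_x\tilde R_0(\chi qh) - \tilde R_0(\chi'qh) - \tilde R_0(\chi q'h)$: the first piece is handled as above, while $\tilde R_0(\chi'qh)$ is bounded via the explicit kernel estimate of $\tilde R_0$ using $h\in L^\infty$ and contributes $\|\chi'\|_{L^2}$; for $\tilde R_0(\chi q'h)$ one uses $\tilde R_0:H^{-2}\to L^2$ after a further integration by parts against $h$.

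\textbf{Commutator terms.} The commutators $[\chi,\partial_x\tilde R_0]$ and $[\chi,\tilde R_0]$ have integral kernels of the form $(\chi(x)-\chi(y))\,K(x-y)$ with $K$ exponentially decaying. Using the pointwise bound $|\chi(x)-\chi(y)|\le \|\chi'\|_{L^\infty}|x-y|$ together with the integrated identity
\[
\iint (\chi(x)-\chi(y))^2 e^{-\gamma|x-y|}\,dx\,dy \;\lesssim\; \|\chi'\|_{L^2}^2,
\]
derived by substituting $\chi(x)-\chi(y) = \int_y^x\chi'(z)\,dz$ and applying Fubini, one shows that these commutators map $H^{-1}\to L^2$ with norm bounded by $\|\chi'\|_{L^\infty}$. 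Applied to $qg$ (which is in $H^{-1}$ with controlled norm) and to $qg'$ (whose distributional interpretation is handled by the same integration-by-parts trick $qg'=qh'=\partial(qh)-q'h$, splitting the commutator accordingly), this produces the $\|\chi'\|_{L^\infty}+\|\chi'\|_{L^2}$ contributions.

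\textbf{Main obstacle.} The chief subtlety is the handling of $qg'$: since $g$ is only in $H^1$, the product $qg'$ does not lie in $H^{-1}$ a priori for $q\in H^{-1}$, and the resolvent estimates cannot be applied naively. The resolution is the systematic use of integration by parts with $h=g-\tfrac{1}{2\kappa}\in H^1$: every occurrence of $qg'$ is converted into a linear combination of $\partial_y(qh)$, $q'h$, and $\chi'qh$, all of which live in distribution spaces on which $\tilde R_0$ (or $\partial_x\tilde R_0$) acts boundedly. The crucial structural feature is that at no stage of this procedure does a second derivative of $\chi$ ever appear, so the final bound involves only $\|\chi q\|_{H^{-1}}$, $\|\chi'\|_{L^2}$, and $\|\chi'\|_{L^\infty}$ as claimed.
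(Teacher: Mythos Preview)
Your route via the Gel'fand--Dikii identity is genuinely different from the paper's, and the commutator and main-term estimates you sketch for the pieces involving $qg$ are fine. The gap is in the pieces involving $qg'$.

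Concretely, consider your second main term $\tilde R_0(\chi q h')$ and write, as you do, $\chi q h' = \partial_y(\chi q h) - \chi' q h - \chi q' h$. The first piece is harmless. For $\tilde R_0(\chi q' h)$ you propose ``$\tilde R_0:H^{-2}\to L^2$ after a further integration by parts against $h$.'' But testing against $\phi\in H^2$ gives
\[
\langle \chi q' h,\phi\rangle = -\langle q,\chi' h\phi\rangle - \langle \chi q,h'\phi\rangle - \langle \chi q,h\phi'\rangle,
\]
and to control $\langle q,\chi' h\phi\rangle$ via $\|q\|_{H^{-1}}$ you need $\chi' h\phi\in H^1$, which forces a $\chi''$ term; representing $q=w_0+w_1'$ and integrating by parts again still lands you on $\langle w_1,\chi'' h\phi\rangle$. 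The same obstruction appears in $\tilde R_0(\chi' q h)$ and in the commutator $[\chi,\tilde R_0](qg')$. Alternatively, if you try to write $q'h=(qh)'-qh'$ you are back to $\chi q h'$, so the scheme is circular. The underlying problem is that for $q\in H^{-1}$ and $g'\in L^2$ the product $qg'$ does not lie in any Sobolev space on which $\tilde R_0$ or $[\chi,\tilde R_0]$ acts into $L^2$; your ``crucial structural feature'' that no $\chi''$ appears is therefore not established.

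The paper avoids this entirely. It never forms $qg'$: instead it expands $g-\tfrac1{2\kappa}$ as the operator series \eqref{series}, pairs $\chi(g-\tfrac1{2\kappa})$ against a generic $f\in H^{-1}$, and uses the single commutator identity $[\chi,R_0]=\sqrt{R_0}\,A(\chi)\,\sqrt{R_0}$ with $A(\chi)=\sqrt{R_0}(-2\partial_x\chi'+\chi'')\sqrt{R_0}$ to slide $\chi$ onto the first copy of $q$. The key point is that the $\chi''$ appearing in $A(\chi)$ is sandwiched between two copies of $\sqrt{R_0}$, so only $\|\chi''\|_{H^{-1}}\lesssim\|\chi'\|_{L^2}$ enters; summing the geometric series gives $\|\chi(g-\tfrac1{2\kappa})\|_{H^1}\lesssim\|\chi q\|_{H^{-1}}+\|\chi'\|_{L^2}$, and one product-rule step then yields the $L^2$ bound on $\chi g'$ with the extra $\|\chi'\|_{L^\infty}$. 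This series/trace framework is what lets the argument close at $H^{-1}$ regularity without ever touching $qg'$.
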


\begin{proof}
We will exploit the series expansion of $g$ \eqref{series}. First of all, one can easily verify that
\begin{align}
\label{commutator}
    [\chi, R_0]= \sqrt{R_0} A(\chi) \sqrt{R_0}\quad\text{with}\quad A(\chi)=\sqrt{R_0} \big(-2\partial_x \chi'(x)+\chi''(x)\big) \sqrt{R_0}.
\end{align}
In the following we denote $r(T) :=\sqrt{R_0}T\sqrt{R_0}$. Then for any $T, S$ we get
\begin{align}\label{move cutoff}
    r(T \chi)r(S)&= \sqrt{R_0} T \chi R_0 S \sqrt{R_0}\\
    &= \sqrt{R_0} T  R_0 \chi S \sqrt{R_0} + \sqrt{R_0} T [\chi, R_0] S \sqrt{R_0}\\
    &= r(T) r(\chi S)+ r(T) A(\chi) r(S).
\end{align}

Then for any $f\in H^{-1}(\R)$ with $\|f\|_{H^{-1}(\R)}=1$
\begin{align*}
    \int \chi(x) & \left[g(q)(x)-\frac{1}{2\kappa}\right] f(x) dx =\\
    &= \sum_{l\geq 1} (-1)^l \left[\tr \{r(f) r(\chi q) r(q)^{l-1} \} +\tr\{ r(f)A(\chi) r(q)^l\}\right],
\end{align*}
so by \eqref{op norm}
\begin{align*}
    \left|\int \chi(x)  \left[g(q)(x)-\frac{1}{2\kappa}\right] f(x) dx\right| \lesssim  \|\chi q\|_{H^{-1}(\R)} +\|\chi\|_{\dot H^1 (\R)}.
\end{align*}
This implies that 
\begin{align*}
    \left\|\chi \left[g(q)-(2\kappa)^{-1}\right]\right\|_{H^1(\R)}\lesssim  \|\chi q\|_{H^{-1}(\R)} +\|\chi'\|_{L^2 (\R)},
\end{align*}
thus
\begin{align*}
    \left\|\chi g'(q)\right\|_{L^2(\R)}&\lesssim  \left\|\chi \left[g(q)-(2\kappa)^{-1}\right]\right\|_{\dot H^1(\R)} + \left\|\chi' \left[g(q)-(2\kappa)^{-1}\right]\right\|_{L^2(\R)} \\
    &\lesssim \|\chi q\|_{H^{-1}(\R)} +\|\chi'\|_{L^2 (\R)} + \|\chi'\|_{L^\infty(\R)}.
\end{align*}
\end{proof}

\begin{theorem}
\label{finite speed}
Let $\chi$ be smooth with $\chi'\in C_c^\infty(\R)$, $q(t)$ solution to \ref{eq:Hk} with initial data $q(0)\in H^{-1}(\R)$, $\|q(0)\|_{H^{-1}}<\delta_0$. Then for all $t\in [0,T]$
\begin{align*}
    \|\chi q(t)\|_{H^{-1}(\R)}\lesssim \|\chi(x-4 \kappa^2t)q(0,x)\|_{H^{-1}(\R)}+ \|\chi\|_{\dot H^{1}(\R)}+\|\chi'\|_{L^\infty(\R)}.
\end{align*}
In particular, if $q(0)$ is compactly supported with $\supp(q(0))\subset [a,b]$ and $\chi=0$ on $[a-10\kappa^2 T,b+10\kappa^2 T ]$, then for all $t\in [0,T]$
\begin{align*}
    \|\chi q(t)\|_{H^{-1}(\R)}\lesssim \|\chi\|_{\dot H^{1}(\R)} +\|\chi'\|_{L^\infty(\R)}.
\end{align*}
The implicit constants do not depend on $t$.
\end{theorem}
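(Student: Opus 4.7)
The approach will exploit the decomposition of \eqref{eq:Hk} into a transport part $\partial_t q = 4\kappa^2 \partial_x q$, whose characteristics move leftward at speed $4\kappa^2$, and the nonlinear piece $16\kappa^5 g'(q)$, which is morally local thanks to Lemma \ref{locality}. The plan is to pass to the moving frame and run a Gronwall argument in $H^{-1}$ against a time-dependent cutoff that trails $\chi$ backward along the characteristic speed.

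To set this up, I would fix $t\in[0,T]$ and introduce the shifted unknown $\tilde q(s,y) := q(s, y-4\kappa^2 s)$, which satisfies $\partial_s \tilde q(s,y) = 16\kappa^5 g'(q(s))(y-4\kappa^2 s)$ and therefore
$$\tilde q(s,y) = q(0,y) + 16\kappa^5 \int_0^s g'(q(\tau))(y-4\kappa^2\tau)\,d\tau.$$
For $s\in[0,t]$ I would introduce the time-dependent cutoff $\chi_s(x):=\chi(x-4\kappa^2(t-s))$ and set $f(s):=\|\chi_s q(s)\|_{H^{-1}(\mathbb R)}$, so that $f(0) = \|\chi(\cdot-4\kappa^2 t)q(0)\|_{H^{-1}}$ and $f(t)=\|\chi q(t)\|_{H^{-1}}$. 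Translation-invariance of the $H^{-1}$ norm gives $f(s) = \|\chi(\cdot-4\kappa^2 t)\,\tilde q(s)\|_{H^{-1}}$, and multiplying the Duhamel identity above by $\chi(\cdot-4\kappa^2 t)$, then undoing the translation in each time slice, produces
$$f(s) \leq f(0) + 16\kappa^5 \int_0^s \|\chi_\tau\, g'(q(\tau))\|_{H^{-1}(\mathbb R)}\,d\tau.$$

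Next, I would apply Lemma \ref{locality} to the cutoff $\chi_\tau$, whose derivative has the same $L^2$ and $L^\infty$ norms as $\chi'$ by translation-invariance. Combined with the embedding $L^2\hookrightarrow H^{-1}$ (which is convenient since the lemma already delivers an $L^2$ bound) this yields
$$\|\chi_\tau\, g'(q(\tau))\|_{H^{-1}} \lesssim \|\chi_\tau\, q(\tau)\|_{H^{-1}} + \|\chi'\|_{L^2} + \|\chi'\|_{L^\infty} = f(\tau) + \|\chi\|_{\dot H^1} + \|\chi'\|_{L^\infty}.$$
Plugging this back gives a Gronwall inequality for $f$ on $[0,t]$; invoking Gronwall's lemma (with the exponential factor $e^{cT}$ absorbed into the implicit constant, since $t\leq T$ and $\kappa$ is fixed) produces $f(t)\lesssim f(0) + \|\chi\|_{\dot H^1} + \|\chi'\|_{L^\infty}$, which is exactly the first assertion.

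For the compactly supported case, I would simply observe that when $\supp(q(0))\subset[a,b]$ and $\chi$ vanishes on $[a-10\kappa^2 T, b+10\kappa^2 T]$, the cutoff $\chi(\cdot-4\kappa^2 t)$ vanishes on a neighborhood of $[a,b]$ for every $t\in[0,T]$, so the product $\chi(x-4\kappa^2 t)q(0,x)$ is the zero distribution and the first assertion collapses to the second. The main delicate point is the bookkeeping of the moving-frame translation in $H^{-1}$: the structural fact that Lemma \ref{locality} applied to $\chi_\tau$ reproduces $f(\tau)$ itself on the right-hand side is what closes the Gronwall loop, and it is precisely this alignment of the moving cutoff with the characteristic speed that justifies the choice of $\chi_s$.
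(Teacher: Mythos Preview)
Your proof is correct and is essentially the same as the paper's: your moving-frame function $f(s)=\|\chi_s q(s)\|_{H^{-1}}$ coincides, via translation invariance, with the paper's $F(s)=\|\chi\, e^{4(t-s)\kappa^2\partial_x}q(s)\|_{H^{-1}}$, and both arguments then feed Lemma~\ref{locality} into a Gronwall inequality in exactly the way you describe. The only cosmetic difference is that the paper phrases the translations through the propagator $e^{4(t-s)\kappa^2\partial_x}$ rather than through the shifted unknown $\tilde q$.
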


\begin{proof}
Let $t\in [0,T]$. We consider 
\begin{align*}
    F(s):= \|\chi e^{4(t-s)\kappa^2 \partial_x}q(s)\|_{H^{-1}(\R)}.
\end{align*}
By the Duhamel formula,
\begin{align*}
    \|\chi q(t)\|_{H^{-1}(\R)}\lesssim \|\chi e^{4t\kappa^2 \partial_x}q(0)\|_{H^{-1}(\R)}+\int_0^t \|\chi e^{4(t-s)\kappa^2 \partial_x}g'(q(s))\|_{H^{-1}(\R)} ds,
\end{align*}
or equivalently
\begin{align*}
    F(t)\lesssim F(0)+\int_0^t \|\chi e^{4(t-s)\kappa^2 \partial_x}g'(q(s))\|_{H^{-1}(\R)} ds.
\end{align*}
We observe that for any $f$
\begin{align}
    \label{linear}
    \left(e^{-4(t-s)\kappa^2 \partial_x}f\right)(x)= f\left(x-4(t-s)\kappa^2 \right).
\end{align}
In particular, we have that for any $f_1, f_2$
\begin{align*}
    \left|f_1 \left(e^{4(t-s)\kappa^2 \partial_x}f_2\right)\right|=\left|\left(e^{-4(t-s)\kappa^2 \partial_x}f_1\right) f_2\right|.
\end{align*}
Therefore by Lemma \ref{locality}
\begin{align*}
    \left\|\chi \left(e^{4(t-s)\kappa^2 \partial_x}g'(q(s))\right)\right\|_{H^{-1}}&=\left\|\left(e^{-4(t-s)\kappa^2 \partial_x}\chi\right) g'(q(s))\right\|_{H^{-1}}\\
    &\lesssim \left\|\left(e^{-4(t-s)\kappa^2 \partial_x}\chi\right) q(s)\right\|_{H^{-1}}+ \left\|\chi' \right\|_{L^2}+ \left\|\chi' \right\|_{L^\infty}\\
    &=\left\|\chi \left(e^{4(t-s)\kappa^2 \partial_x}q(s)\right)\right\|_{H^{-1}}+ \left\|\chi' \right\|_{L^2}+ \left\|\chi' \right\|_{L^\infty}\\
    &=F(s)+ \left\|\chi' \right\|_{L^2}+ \left\|\chi' \right\|_{L^\infty}.
\end{align*}
All in all,
\begin{align*}
    F(t)\lesssim F(0)+ \left\|\chi' \right\|_{L^2}+ \left\|\chi' \right\|_{L^\infty}+\int_0^t F(s)ds
\end{align*}
so Gr\"onwall's inequality ensures that $F(t)\lesssim F(0)+ \left\|\chi' \right\|_{L^2}+ \left\|\chi' \right\|_{L^\infty}$ or, equivalently,
\begin{align*}
    \|\chi q(t)\|_{H^{-1}(\R)}\lesssim \left\|\left(e^{-4t\kappa^2 \partial_x}\chi\right) q(0)\right\|_{H^{-1}(\R)}+ \left\|\chi' \right\|_{L^2}+ \left\|\chi' \right\|_{L^\infty}.
\end{align*}
Note that the implicit constants are allowed to depend on $T$ but not $t$.

If $q(0)$ is compactly supported with $\supp(q(0))\subset [a,b]$ and $\chi(x)=0$ for all $x\in[a-10\kappa^2 T,b+10\kappa^2 T ]$, then $\left(e^{-4t\kappa^2 \partial_x}\chi\right)(x)=\chi\left(x-4t\kappa^2 \right)=0$ for all $x\in[a-4\kappa^2 T,b+4\kappa^2 T ]$ and we get that for all $t\in [0,T]$
\begin{align*}
    \|\chi q(t)\|_{H^{-1}(\R)}\lesssim \|\chi\|_{\dot H^{1}(\R)} +\|\chi'\|_{L^\infty(\R)}.
\end{align*}
\end{proof}

In our setting, Theorem \ref{finite speed} implies that for $n$ sufficiently large so that $\kappa^2 T\ll \frac{L_n}{N_n}$
\begin{align}
\label{eq:finite speed cor}
    \|(1-\chi_n^\ast) q_n(t)\|_{H^{-1}(\R)}\lesssim N_n ^{\frac 1 2} L_n^{-\frac 1 2} \quad\text{for all}\quad t\in [0,T].
\end{align}
Since $\frac{N_n}{L_n}\to 0$ as $n\to \infty$, $f_n(t)-q_n(t)$ converges to  $0$ weakly in $H^{-1}(\R)$ uniformly for all $t\in [0,T]$.

Note that $\mathring f_n$ solves 
\begin{equation*} 
    \begin{cases}
    \frac{d}{dt} \mathring f_n = 4\kappa^2 \mathring f_n'+16\kappa^5 g'(\mathring f_n)+\mathring e_n\\
     \mathring f_n(0)= \mathring \chi_n^0 u_{n,0}
    \end{cases}
\end{equation*}
with 
\begin{align*}
    e_n= 16\kappa^5 \big(g'(q_n) -g'(\chi_n^\ast q_n) -(1-\chi_n^\ast)g'(q_n) \big) -4\kappa^2 (\chi_n^\ast) 'q_n.
\end{align*}
Using the diffeomorphism property of $g$, Lemma \ref{locality}, and \eqref{eq:finite speed cor}, we get that for all $t\in [0,T]$ the error term $\|\mathring e_n(t)\|_{H^{-1}(\mathbb T_n)}$ is bounded (up to a constant) by
\begin{align*}
    \|(1-\chi_n^\ast)g'(q_n)\|&_{H^{-1}(\mathbb R)}+ \|g'(q_n) -g'(\chi_n^\ast q_n)\|_{H^{-1}(\mathbb R)} + \|(\chi_n^\ast) 'q_n\|_{H^{-1}(\mathbb R)}\\
    & \lesssim N_n ^{\frac 1 2} L_n^{-\frac 1 2} + \|(1-\chi_n^\ast)q_n\|_{H^{-1}(\mathbb R)}+ \|(\chi_n^\ast) '\|_{H^{1}(\mathbb R)} \|q_n\|_{H^{-1}(\mathbb R)}\\
    &\lesssim N_n ^{\frac 1 2} L_n^{-\frac 1 2}(1+ A).
\end{align*}
Now let's consider $u_n$ the solution (up to time $T_n>T$) to \eqref{eq:Hkn} with initial data $u_{n,0}$. By Theorem \ref{Hkn to Hk}, for all $t\in[0,T]$
\begin{align*}
    \|u_n(t)-\mathring f_n(t)\|_{H^{-1}(\T_n)}&\lesssim \|\mathring \varphi_n u_{n,0}\|_{H^{-1}(\T_n)}+ (m_n^{\frac 12} + M_n^{-\frac 12})A + N_n ^{\frac 1 2} L_n^{-\frac 1 2} (1+A)\\
    &\lesssim \left(m_n^{\frac 12} + M_n^{-\frac 12} + N_n ^{\frac 1 2} L_n^{-\frac 1 2}+ m_n ^{-\frac 12} M_n^{\frac 12} N_n ^{-\frac 1 2} \right) (1+A)
\end{align*}
which approaches $0$ as $n\to \infty$. 

The following theorem records the results of this section:

\begin{theorem}[Finite Dimensional PDE Approximation]
\label{fda}
Fix $\kappa\geq1$, $T>0$, $0<A<\frac{\delta_0}{4}$ and let $m_n\to 0$, $M_n, N_n, L_n\to\infty$ as above. Assume $u_{n,0}\in \dot H^{-\frac 12}(\mathbb T_n)$ with $\|u_{n,0}\|_{\dot H^{-\frac 12}(\mathbb T_n)}\leq A$ and $P^{L_n}_{\leq m_n}u_{n,0}=0=P^{L_n}_{>M_n}u_{n,0}$. Let $u_n$ be the solution to \eqref{eq:Hkn} with initial data $u_n(0)=u_{n,0}$. Then there exist $q_{n,0}\in \dot H^{-\frac 12}(\mathbb R)$ compactly supported on an interval of length $L_n$ containing $0$, satisfying $\|q_{n,0}\|_{\dot H^{-\frac 12}(\mathbb R)}\lesssim A$, so that
\begin{align*}
    \lim_{n\to\infty}\|u_{n,0}-\mathring q_{n,0}\|_{\dot H^{-\frac 12}(\mathbb T_n)}= \lim_{n\to\infty}\|u_{n,0}-\mathring q_{n,0}\|_{\dot H^{-1}(\mathbb T_n)}=0.
\end{align*}
Moreover, there exist bump functions $\chi_n^\ast\in C_c^\infty(\mathbb{R})$ satisfying $\chi_n^\ast=1$ on the support of $q_{n,0}$, for which the following is true for the solutions $q_n$ to \eqref{eq:Hk} with initial data $q_n(0)=q_{n,0}$:
$$\lim_{n\to\infty}\left\|u_n(t)-\mathring{\left[\chi_n^\ast q_n(t)\right]}_{L_n}\right\|_{H^{-1}(\T_n)}=0\quad\text{for all}\quad t\in[0,T].$$
\end{theorem}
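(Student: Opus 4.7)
The plan is to assemble the components developed in the three subsections of Section \ref{sec;periodization} and verify that they combine to give the asserted convergence. For the construction of $q_{n,0}$, I would follow the \emph{cutting} procedure verbatim: Lemma \ref{bump} produces a set of at least $\tfrac{9}{10}N_n$ indices $k$ for which both $\|\mathring\phi_n^k u_{n,0}\|_{\dot H^{-\frac 1 2}(\T_n)}$ and $\|\mathring\phi_n^k u_{n,0}\|_{\dot H^{-1}(\T_n)}$ are small, and after restricting to those whose supports lie at distance at least $10 L_n/N_n$ from the origin one is left with density $\geq \tfrac{7}{10}$ (the set $S_3$). Enforcing the mean-zero condition $\int \mathring{\varphi}_n u_{n,0}=0$ is handled by the case analysis \eqref{case1}--\eqref{case3}. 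Setting $q_{n,0}:=\chi_n^0 u_{n,0}$ with $\chi_n^0$ as in the excerpt produces a compactly supported element of $\dot H^{-\frac 12}(\R)$, and the identity $\mathring{\chi}_n^0 + \mathring{\varphi}_n = 1$ yields $u_{n,0} - \mathring{q}_{n,0} = \mathring{\varphi}_n u_{n,0}$. The two convergences in $\dot H^{-\frac 12}(\T_n)$ and $\dot H^{-1}(\T_n)$ are then immediate consequences of \eqref{small -1/2} and \eqref{small -1}, since the parameter hypothesis $m_n^{-2}M_n\ll N_n$ forces the right-hand sides to vanish. The uniform bound $\|q_{n,0}\|_{\dot H^{-\frac 1 2}(\R)}\lesssim A$ is exactly Lemma \ref{H-1/2 bound}.

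For the dynamical approximation I would define $\chi_n^*$ as the slightly fattened cutoff around $\supp(\chi_n^0)$ described in the excerpt, and set $f_n(t):=\chi_n^* q_n(t)$. Since $1-\chi_n^*$ vanishes on a $\tfrac{L_n}{20 N_n}$-neighborhood of $\supp(q_{n,0})$, Theorem \ref{finite speed} applies once $n$ is large enough that $\kappa^2 T\ll L_n/N_n$ and yields the bound \eqref{eq:finite speed cor}. A direct Duhamel computation then shows that $\mathring{f}_n$ satisfies \eqref{eq:Hke} with forcing
\begin{equation*}
  e_n = 16\kappa^5\bigl[g'(q_n) - g'(\chi_n^* q_n) - (1-\chi_n^*)\,g'(q_n)\bigr] - 4\kappa^2 (\chi_n^*)' q_n.
\end{equation*}
Combining the diffeomorphism-based Lipschitz estimate \eqref{dif1}, the localization Lemma \ref{locality} applied to $1-\chi_n^*$, and the finite-speed bound \eqref{eq:finite speed cor} controls $\|\mathring{e}_n(t)\|_{H^{-1}(\T_n)}$ by a multiple of $N_n^{1/2} L_n^{-1/2}(1+A)$ uniformly on $[0,T]$.

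It then remains to invoke Theorem \ref{Hkn to Hk} with $v_n=\mathring{f}_n$ and $v_{n,0}=\mathring{f}_n(0)$: the initial-data discrepancy $\|u_{n,0}-\mathring{f}_n(0)\|_{H^{-1}(\T_n)}=\|\mathring{\varphi}_n u_{n,0}\|_{H^{-1}(\T_n)}$ tends to zero by the first part of the argument, the frequency-truncation error contributes $(m_n^{1/2}+M_n^{-1/2})A$, and the forcing contributes $N_n^{1/2}L_n^{-1/2}(1+A)$. All three vanish as $n\to\infty$ under the parameter regime $m_n^{-2}M_n\ll N_n$ and $N_n^5\ll L_n$, delivering the desired uniform convergence on $[0,T]$. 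The main technical difficulty I anticipate is the bound on $\mathring{e}_n$: the piece $g'(q_n)-g'(\chi_n^* q_n)$ requires combining \eqref{dif1} with \eqref{eq:finite speed cor} to convert a decay estimate on $(1-\chi_n^*)q_n$ into an $H^{-1}$ decay estimate on the difference, while the piece $(1-\chi_n^*)g'(q_n)$ requires invoking Lemma \ref{locality} with the non-compactly-supported multiplier $1-\chi_n^*$, which is admissible because its derivative is compactly supported and $\|(\chi_n^*)'\|_{L^2\cap L^\infty}$ is controlled by the geometric scale $L_n/N_n$.
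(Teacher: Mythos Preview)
Your proposal is correct and follows essentially the same approach as the paper: Theorem \ref{fda} is explicitly stated as a summary of the constructions and estimates developed throughout Section \ref{sec;fda}, and you have accurately identified and assembled those components (the cutting procedure via Lemma \ref{bump} and \eqref{case1}--\eqref{case3}, the uniform bound from Lemma \ref{H-1/2 bound}, the finite-speed estimate \eqref{eq:finite speed cor}, the error bound on $e_n$, and the comparison Theorem \ref{Hkn to Hk}). One small correction: the relevant section is \ref{sec;fda}, not \ref{sec;periodization}, and Lemma \ref{bump} gives $\tfrac{9}{10}N_n$ indices for each of the two norm conditions separately, so the intersection $S_1\cap S_2$ has at least $\tfrac{8}{10}N_n$ elements before restricting away from the origin to reach $\tfrac{7}{10}N_n$; this does not affect your argument.
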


\section{Weak well-posedness for the $H_\kappa$ flow} \label{sec;weak wp}

The process carried out in the previous section leaves us with a sequence of initial data that now lives in the same space: $\dot H^{-\frac 12}(\mathbb R)$. Moreover, as we saw it is uniformly bounded in this space. Unfortunately, the only conclusion we can draw from this is the existence of a weak limit. Naturally, the following question arises: If we have weak convergence of the initial data, what does this imply for the behavior of the solutions to \eqref{eq:Hk} at later times in the weak topology? The key to answering this question is equicontinuity in time.

\begin{lemma}
\label{Hk eqcts}
The \ref{eq:Hk} flow is equicontinuous in time in $H^{-1}(\R)$, i.e., for every $T>0$, $Q\subset B_{\delta_0}$ equicontinuous in $H^{-1}$ and $\varepsilon>0$ there exists $\delta>0$ such that for all $q$ that solve \eqref{eq:Hk} with initial data $q(0)$ in $Q$, we have
$$\|q(t)-q(s)\|_{H^{-1}(\R)}<\varepsilon$$
for all $t,s\in[0,T]$ with $|t-s|<\delta$.
\end{lemma}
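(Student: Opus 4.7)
The plan is to use Duhamel's formula for \eqref{eq:Hk} to write
\begin{align*}
    q(t) - q(s) = \bigl[S(t-s) - I\bigr]q(s) + 16\kappa^5 \int_s^t S(t-\tau)g'(q(\tau))\,d\tau,
\end{align*}
where $S(\sigma) := e^{4\kappa^2 \sigma \partial_x}$ denotes the linear propagator, acting by the isometric translation $(S(\sigma)f)(x) = f(x+4\kappa^2 \sigma)$. The task then splits into controlling the two resulting terms uniformly over all solutions with $q(0) \in Q$.

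The nonlinear contribution is the easy part. By the diffeomorphism property (Proposition \ref{diffeo R}), $\|g'(q(\tau))\|_{H^{-1}(\mathbb R)} \lesssim \|g(q(\tau))-\tfrac{1}{2\kappa}\|_{H^{1}(\mathbb R)} \lesssim \|q(\tau)\|_{H^{-1}(\mathbb R)}$, and by the $H^{-1}$ a priori bound \eqref{H-1 conservation} the latter is uniformly controlled by $\|q(0)\|_{H^{-1}(\mathbb R)} \leq \delta_0$. Since $S$ is an isometry on $H^{-1}$, the nonlinear term is bounded by $C_\kappa \delta_0 |t-s|$, which vanishes uniformly as $|t-s|\to 0$.

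The main obstacle is the linear piece $\|S(t-s)q(s) - q(s)\|_{H^{-1}(\mathbb R)}$, which is a modulus of translation continuity applied to $q(s)$. To control it uniformly, I would prove the following key claim: \emph{the orbit set $\mathcal Q := \{q(s) : q(0) \in Q,\, s \in [0,T]\}$ is equicontinuous in $H^{-1}(\mathbb R)$}. Once this is established, the definition of equicontinuity immediately yields $\|S(t-s)q(s)-q(s)\|_{H^{-1}(\mathbb R)} < \varepsilon/2$ whenever $|t-s|$ is sufficiently small, uniformly in $q(0) \in Q$ and $s \in [0,T]$.

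To verify the claim, a further Duhamel decomposition writes each $q(s)$ as a sum of $S(s)q(0)$ and $16\kappa^5 \int_0^s S(s-\tau)g'(q(\tau))\,d\tau$. The first summand forms an equicontinuous family because $S(s)$ is translation (so commutes with spatial shifts and is isometric on $H^{-1}$), while $Q$ is equicontinuous by assumption. The second summand is uniformly bounded in $L^2(\mathbb R)$ by $16\kappa^5 T \sup_{\tau\in[0,T]}\|g'(q(\tau))\|_{L^2(\mathbb R)}\lesssim_\kappa T\delta_0$, where I again use the diffeomorphism property together with \eqref{H-1 conservation} and the embedding $H^1\hookrightarrow L^\infty$ at the $L^2$-derivative level. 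Bounded subsets of $L^2(\mathbb R)$ are equicontinuous in $H^{-1}(\mathbb R)$ by the trivial estimate $\int_{|\xi|\geq\lambda}(1+\xi^2)^{-1}|\hat{q}(\xi)|^2\,d\xi\leq \lambda^{-2}\|q\|_{L^2(\mathbb R)}^2$ and Lemma \ref{Q1}. Corollary \ref{Qc}(3) then yields equicontinuity of $\mathcal Q$, completing the argument.
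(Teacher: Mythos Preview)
Your proof is correct and rests on the same two ingredients as the paper's: the linear propagator $S(\sigma)$ is a spatial translation, so equicontinuity of a family in $H^{-1}$ controls $\|S(\sigma)f-f\|_{H^{-1}}$ uniformly; and the nonlinear source $g'(q(\tau))$ lands in a better space than $H^{-1}$, so it contributes only $O(|t-s|)$ and is itself equicontinuous.

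The organization differs slightly. The paper writes Duhamel from time $0$, obtaining three terms, and then invokes equicontinuity of $Q$ (for the first) and of $Q'=\{g'(q(\tau)):\tau\in[0,T],\,q(0)\in Q\}$ (for the second), the latter via boundedness in $\dot H^{-1/2}$ and Corollary~\ref{Qc}(1). You instead write Duhamel from time $s$, which requires equicontinuity of the full orbit set $\mathcal Q=\{q(s):q(0)\in Q,\,s\in[0,T]\}$; you then prove this by a second Duhamel decomposition, using that the integrated nonlinear term is bounded in $L^2$ (rather than $\dot H^{-1/2}$) and hence equicontinuous in $H^{-1}$. Your route has the minor advantage of yielding equicontinuity of $\mathcal Q$ as an explicit byproduct --- essentially the $H_\kappa$ portion of Lemma~\ref{Q2} --- at the cost of one extra Duhamel expansion. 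Both arguments are equally short and rely on the same $H^1$ control of $g(q)-\tfrac{1}{2\kappa}$ from Proposition~\ref{diffeo R}.
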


\begin{proof}
We fix $T>0$, $Q\subset B_{\delta_0}$ equicontinuous in $H^{-1}(\mathbb R)$, and $\varepsilon>0$. In the following we will allow the implicit constants to depend on $T$ and $Q$. Let $t,s\in[0,T]$. Without loss of generality we may assume that $s<t$.  First of all, by the Duhamel formula
\begin{align}\label{a}
    \|q(t)-q(s)\|_{H^{-1}(\R)}\lesssim &\left\|e^{4t\kappa^2 \partial_x}q(0)-e^{4s\kappa^2 \partial_x}q(0)\right\|_{H^{-1}(\R)} \notag\\
    &+\int_0^s\left \|e^{4(t-\tau)\kappa^2 \partial_x} g'(q(\tau))-e^{4(s-\tau)\kappa^2 \partial_x} g'(q(\tau))\right\|_{H^{-1}(\R)} d\tau \notag\\
    &+ \int_s^t \left\|g'(q(\tau))\right\|_{H^{-1}(\R)} d\tau.
\end{align}
Starting from the last term, the diffeomorphism property and the estimates of Theorem \ref{Hk wp} give
\begin{align*}
    \int_s^t \|g'(q(\tau))\|_{H^{-1}(\R)}d\tau \lesssim \int_s^t \|q(\tau)\|_{H^{-1}(\R)}d\tau\lesssim \int_s^t\|q(0)\|_{H^{-1}(\R)} e^{c\tau} d\tau \lesssim |t-s|.
\end{align*}

Next, we observe that for any bounded equicontinuous subset $F\subset H^{-1}(\mathbb R)$, \eqref{linear} implies that
\begin{align}
    \label{eq} 
    \left\|e^{4t\kappa^2 \partial_x}f-e^{4s\kappa^2 \partial_x}f\right\|_{H^{-1}(\R)}&= \left\|f(x+4t\kappa^2) -f(x+4s\kappa^2 )\right\|_{H^{-1}(\R)}\notag\\
    &= \left\|f(x+4(t-s)\kappa^2) -f(x)\right\|_{H^{-1}(\R)}\to 0
\end{align}
as $|t-s|\to 0$, uniformly for all $f\in F$, by the definition of equicontinuity.

We can now apply this estimate for the first term of (\ref{a}) to get $\delta_Q>0$ so that for all $q(0)\in Q$ and all $t,s\in[0,T]$ with $|t-s|<\delta_Q$
\begin{align*}
    \left\|e^{4t\kappa^2 \partial_x}q(0)-e^{4s\kappa^2 \partial_x}q(0)\right\|_{H^{-1}(\R)}<\frac{\varepsilon}{3}.
\end{align*}
Moreover, the set 
\begin{align*}
    Q'=\{g'(q(t)): t\in [0,T],\, q\,\,\text{solution to \eqref{eq:Hk} with initial data}\,\,q(0)\in Q\}
\end{align*}
is bounded in $\dot H^{-\frac 12}(\mathbb R)$, so it is equicontinuous as well, with its equicontinuity properties depending on $Q$. Then \eqref{eq} yields for the second term of (\ref{a}) that there exists $\delta_{Q'}=\delta_Q'>0$
so that 
\begin{align*}
    \int_0^s \left\|e^{4(t-\tau)\kappa^2 \partial_x} g'(q(\tau))-e^{4(s-\tau)\kappa^2 \partial_x} g'(q(\tau))\right\|_{H^{-1}(\R)} d\tau <\frac{\varepsilon}{3}
\end{align*}
for all $q(0)\in Q$ and $t,s\in[0,T]$ with $|t-s|<\delta_{Q}'$.

Combining all the above, we conclude that we can find $\delta>0$ depending only on $\varepsilon$, $T$ and $Q$ so that for all $q(0)\in Q$ and $t,s\in[0,T]$ with $|t-s|<\delta$
\begin{align*}
  \|q(t)-q(s)\|_{H^{-1}(\R)}<\varepsilon.
\end{align*}
\end{proof}

\begin{theorem}
\label{Hk weak}
Fix $T>0$ and $Q\subset B_{\delta_0}$ equicontinuous in $H^{-1}$. Let $\{q_n(0)\}$ be a sequence in $Q$ and $q_n$ the solution to \eqref{eq:Hk} with initial data $q_n(0)$. Then:\\
1. Passing to a subsequence, $q_{n}(t)$ converges weakly in $H^{-1}(\R)$ to some $q(t)\in H^{-1}(\R)$ uniformly for all $t\in[0,T]$.\\
2. For the same subsequence, $g(q_{n}(t))-\frac{1}{2\kappa}$ converges weakly in $H^{1}(\R)$ to $g(q(t))-\frac{1}{2\kappa}$ uniformly for all $t\in[0,T]$.\\
3. $q$ is a solution to \eqref{eq:Hk} on $[0,T]$.
\end{theorem}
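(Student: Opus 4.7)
\emph{Part 1} (Weak convergence of $q_n$). By Theorem~\ref{Hk wp}, the functions $q_n(t)$ all lie in a fixed closed ball $B$ of $H^{-1}(\R)$, which is weakly compact by Banach--Alaoglu and, since $H^{-1}(\R)$ is separable, metrizable in the weak topology. Lemma~\ref{Hk eqcts} provides equicontinuity of the family $\{t\mapsto q_n(t)\}_n$ from $[0,T]$ into $H^{-1}(\R)$ in the norm topology, hence also in the weak one. The classical Arzel\`a--Ascoli theorem applied in this compact metric space yields a subsequence along which $q_n(t)\to q(t)$ weakly in $H^{-1}(\R)$, uniformly in $t\in[0,T]$.

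\emph{Part 2} (Weak convergence of $g(q_n)$). Proposition~\ref{diffeo R} gives
\[
\left\|g(q_n(t))-\tfrac{1}{2\kappa}\right\|_{H^1(\R)}\lesssim 1,\qquad
\|g(q_n(t))-g(q_n(s))\|_{H^1(\R)}\lesssim \|q_n(t)-q_n(s)\|_{H^{-1}(\R)},
\]
so $\{g(q_n)-\tfrac{1}{2\kappa}\}$ is bounded in $H^1(\R)$ and equicontinuous in $t$. A second application of weak Arzel\`a--Ascoli, now on a weakly compact ball of $H^1(\R)$, extracts a further subsequence for which $g(q_n(t))-\tfrac{1}{2\kappa}\to h(t)$ weakly in $H^1(\R)$, uniformly in $t$. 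To identify $h(t)$ with $g(q(t))-\tfrac{1}{2\kappa}$, I first note that the Duhamel representation
\[
q_n(t)=e^{4\kappa^2 t\partial_x}q_n(0)+16\kappa^5\int_0^t e^{4\kappa^2(t-s)\partial_x}g'(q_n(s))\,ds,
\]
combined with the uniform $L^2(\R)$-bound on $g'(q_n(s))$ coming from Proposition~\ref{diffeo R} and the translation-invariance of Sobolev norms, makes the whole family $\{q_n(t):n\in\mathbb N,\,t\in[0,T]\}$ equicontinuous in $H^{-1}(\R)$ (using Corollary~\ref{Qc} together with the fact that $L^2$-bounded subsets of $H^{-1}$ are equicontinuous). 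Consequently, for any $\chi\in C_c^\infty(\R)$, the functions $\chi q_n(t)$ are bounded, equicontinuous, and uniformly supported in one fixed compact set; a Kolmogorov--Riesz argument then gives local strong convergence $\chi q_n(t)\to\chi q(t)$ in $H^{-1}(\R)$. Combined with the exponential localization of the kernels appearing in the series~\eqref{series} for $g$, this yields pointwise convergence $g(q_n(t))(x)\to g(q(t))(x)$ for every $x\in\R$. Since weak $H^1$ convergence implies strong $L^2_{\mathrm{loc}}$ and hence almost-everywhere convergence on a further subsequence, we conclude $h(t)=g(q(t))-\tfrac{1}{2\kappa}$, and the identification is independent of the subsequence.

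\emph{Part 3} (Passing to the limit in the equation). Testing the integrated form of~\eqref{eq:Hk} against $\phi\in C_c^\infty(\R)$ yields
\[
\langle q_n(t),\phi\rangle-\langle q_n(0),\phi\rangle
=-4\kappa^2\int_0^t\langle q_n(s),\phi'\rangle\,ds
-16\kappa^5\int_0^t\bigl\langle g(q_n(s))-\tfrac{1}{2\kappa},\phi'\bigr\rangle\,ds.
\]
Part 1 passes the boundary and first-integral terms to the limit via dominated convergence in $s$; Part 2 does the same for the second integral. Therefore $q$ satisfies~\eqref{eq:Hk} distributionally on $[0,T]$ with $q(0)\in B_{\delta_0}$, and by the uniqueness half of Theorem~\ref{Hk wp} it agrees with the strong solution launched from $q(0)$.

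\emph{Main obstacle.} The delicate step is the identification of the weak $H^1$ limit of $g(q_n(t))$ in Part 2: weak $H^{-1}$ convergence alone does not pass through the nonlinear map $g$. The workaround is to propagate equicontinuity along the flow (via Duhamel and the $L^2$-regularity of $g'(q)$) so as to upgrade weak convergence to local strong convergence in $H^{-1}$, and then to exploit the exponential spatial localization of the resolvent kernel inside the defining series for $g$.
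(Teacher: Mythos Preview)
Your argument is correct and in places more streamlined than the paper's. In Part~1 you invoke Arzel\`a--Ascoli on the weakly metrizable ball of $H^{-1}$ directly; the paper does the same thing by hand, extracting weak subsequential limits at rational times via a diagonal argument, extending to all $t$ by the time-equicontinuity of Lemma~\ref{Hk eqcts}, and then running an $\varepsilon/3$ argument for uniformity. In Part~3 you test the equation in distributional form and pass to the limit by dominated convergence; the paper instead works with the Duhamel formula and shows each piece converges weakly. These are equivalent.

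The real divergence is Part~2. The paper never extracts an abstract weak $H^1$ limit $h(t)$; it computes $\int\phi\,[g(q_n)-g(q)]$ directly by inserting a wide cutoff $\psi\equiv 1$ on $\supp\phi$, then using the commutator identity $[\psi,R_0]=\sqrt{R_0}\,A(\psi)\,\sqrt{R_0}$ to push $\psi$ through the series~\eqref{series} for $g$. This produces a bound of the shape $\|\psi(q_n-q)\|_{H^{-1}}+\|\psi'\|_{H^1}$, and the latter is made small by taking $\psi$ to transition slowly. Your route---extract $h(t)$ first by a second Arzel\`a--Ascoli, then identify it pointwise via the exponential localization of $\sqrt{R_0}\delta_x$ and local strong $L^2$ convergence---is more circuitous but avoids the operator algebra entirely. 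Both arguments ultimately rest on the same fact, namely that $\chi q_n(t)\to\chi q(t)$ \emph{strongly} in $H^{-1}$ for compactly supported $\chi$; you make this explicit by propagating spatial equicontinuity through Duhamel (the linear part is a translation, the nonlinear part is $L^2$-bounded hence $H^{-1}$-equicontinuous), whereas the paper uses it without comment when it asserts $\|\psi(q_n(t)-q(t))\|_{H^{-1}}<\varepsilon$. In that sense your write-up actually fills a small expository gap.
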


\begin{proof}
In the following, we allow the implicit constants to depend on $T$ and $Q$.\\
1. Since $\|q_n(0)\|_{H^{-1}(\R)}\leq \delta_0$ for all $n$, by \eqref{H-1 conservation}
\begin{align*}
    \|q_n(t)\|_{ H^{-1}(\R)}\lesssim \delta_0\quad\text{for all}\quad t\in[0,T], \,n\in \mathbb N.
\end{align*}
We consider $[0,T]\cap \mathbb Q=\{t_j:j\in \mathbb N\}$. Then, for each $j\in\mathbb N$, $\|q_n(t_j)\|_{H^{-1}(\R)}$ is bounded so we can find a subsequence that converges weakly in $H^{-1}(\R)$ to some $q(t_j)\in H^{-1}(\R)$. By a diagonal argument, we obtain a subsequence $\{q_{n_k}\}$ such that $q_{n_k}(t_j)$ converges weakly in $H^{-1}(\R)$ to $q(t_j)$ for all $j\in\mathbb N$. For the sake of convenience, we relabel the subsequence $q_n$. 

We now define for every $t\in[0,T]$ $q(t)$ to be the $H^{-1}$ weak limit of $q(\tau_m)$ for $\{\tau_m\}\subset [0,T]\cap \mathbb Q$ such that $\tau_m\to t$. First of all, we need to establish that this is well-defined. For every such sequence $\{\tau_m\}$ there exists a subsequence $\{\tau_{m_k}\}$ for which $q(\tau_{m_k})$ converges weakly in $H^{-1}$, since
\begin{align*}
    \|q(\tau)\|_{H^{-1}(\R)} \lesssim R\quad\text{for all}\quad \tau\in[0,T]\cap \mathbb{Q}.
\end{align*}
Moreover, for any two sequences $\{\tau_m\}, \{\tau_m'\}$ in $[0,T]\cap\mathbb{Q}$ such that $\tau_m\to t$, $\tau_m'\to t$ and $\{q(\tau_m)\}$, $\{q(\tau_m')\}$ converge weakly in $H^{-1}(\R)$, we get that for any $\phi\in H^{1}(\R)$ with $\|\phi\|_{H^{1}(\R)}=1$
\begin{align}
\label{b}
    |\langle q(\tau_m)-q(\tau_m'), \phi \rangle|\leq  |\langle q(\tau_m)-q_n(\tau_m), \phi \rangle|+ &|\langle q_n(\tau_m)-q_n(\tau_m'), \phi \rangle| \notag\\
    &+ |\langle q_n(\tau_m')-q(\tau_m'), \phi \rangle|
\end{align}
for all $m, n\in \mathbb N$. Let $\varepsilon>0$. For all $n\in\mathbb N$, by Lemma \ref{Hk eqcts}
\begin{align*} 
    |\langle q_n(\tau_m)-q_n(\tau_m'), \phi \rangle|\leq \|q_n(\tau_m)-q_n(\tau_m')\|_{H^{-1}(\R)} <\varepsilon
\end{align*}
for $m$ large (independently of $n$) so that $|\tau_m-\tau_m'|$ is sufficiently small. Then, for fixed $m$, the first and last terms of (\ref{b}) are small for $n$ sufficiently large by the definition of $q$ at rational times. Therefore, for $m$ sufficiently large we get that
\begin{align*}
    |\langle q(\tau_m)-q(\tau_m'), \phi \rangle|< 3\varepsilon,
\end{align*}
so the $H^{-1}$ weak limits of $q(\tau_m)$ and $q(\tau_m')$ are the same. The above two points ensure that for every $t\in[0,T]$, for any sequence $\{\tau_m\}$ in $[0,T]\cap\mathbb{Q}$ such that $\tau_m\to t$, 
the sequence $q(\tau_m)$ converges weakly in $H^{-1}(\R)$ and the limit does not depend on the choice of $\{\tau_m\}$. 

Next, we will show that $q_n(t)$ converges weakly in $H^{-1}(\R)$ to $q(t)$ uniformly for all $t\in[0,T]$. Let $\varepsilon>0$ and fix $\phi\in H^{1}(\R)$ with $\|\phi\|_{H^{1}(\R)}=1$. By definition $q$ is uniformly continuous in time on $[0,T]$ (with respect to the weak $H^{-1}$ topology) and by Lemma \ref{Hk eqcts} $\{q_n\}$ is equicontinuous in time on the same interval, so there exists $\delta>0$ such that for all $t,s\in [0,T]$ satisfying $|t-s|<\delta$
\begin{align}
\label{eq:wwp1}
    |\langle q(t)-q(s), \phi \rangle|<\varepsilon
\end{align}
and
\begin{align}
\label{eq:wwp2}
    |\langle q_n(t)-q_n(s), \phi \rangle|<\varepsilon\quad\text{for all}\quad n\in\mathbb{N}.
\end{align}
The interval $[0,T]$ is compact, so we can find $\tau_1,\dots,\tau_M\in [0,T]\cap\mathbb{Q}$ so that $[0,T]\subset \bigcup_{m=1}^M (\tau_m-\delta, \tau_m+\delta)$. We can also find $N\in \mathbb{N}$ large enough so that for all $1\leq m\leq M$  
\begin{align}
\label{eq:wwp3}
    |\langle q_n(\tau_m)-q(\tau_m), \phi \rangle|<\varepsilon\quad\text{for all}\quad  n>N.
\end{align}
Let $t\in [0,T]$ and $\tau_{m_0}\in\{\tau_1,\dots,\tau_M\}$ such that $|t-\tau_m|<\delta$. Then by (\ref{eq:wwp1}), (\ref{eq:wwp2}), and (\ref{eq:wwp3}), for all $n>N$
\begin{align*}
    |\langle q_n(t)-q(t), \phi \rangle|&\leq |\langle q_n(t)-q_n(\tau_{m_0}), \phi \rangle|+ |\langle q_n(\tau_{m_0})-q(\tau_{m_0}), \phi \rangle|\\
    &+ |\langle q(\tau_{m_0})-q(t), \phi \rangle|\\
    & <3\varepsilon.
\end{align*}
Hence $q_n(t)$ converges weakly in $H^{-1}(\R)$ to $q(t)$ uniformly for all $t\in [0,T]$.

2. Fix $\phi\in C_c^\infty(\R)\cap H^{-1}(\R)$ with $\|\phi\|_{H^{-1}(\mathbb R)}=1$, $M>0$ so that $\supp(\phi)\subset \{x:|x|\leq M\}$, and let $\varepsilon>0$.  We want to show that for $n$ sufficiently large 
\begin{align*}
    \left|\int \phi(x)[g(q_n(t))-g(q(t))] dx\right| <\varepsilon\quad\text{for all}\quad t\in[0,T].
\end{align*}
Let $\psi\in C_c^\infty(\R)$ be a smooth cutoff such that $\psi=1$ on $\{|x|\leq M\}$ and $\psi=0$ on $\{|x|>M+\varepsilon^{-2}\}$ in order to ensure that $\|\psi'\|_{L^2}+\|\psi''\|_{L^2}\lesssim \varepsilon$. 

From the definition of $g$ \eqref{series}, using the notation $r(T)= \sqrt{R_0} T \sqrt{R_0}$ as in section \ref{sec;fda}, and by applying repeatedly the identity \eqref{move cutoff} we have
\begin{align*}
    &\int \phi(x)[g(q_n(t))-g(q(t))] dx =\int \phi(x) \psi(x)[g(q_n(t))-g(q(t))]  dx\\
    &=\sum_{l\geq 1} (-1)^l \tr\left\{ \sqrt{R_0} \phi\psi \sqrt{R_0}(\sqrt{R_0} q_n \sqrt{R_0})^l \right\}\\
    &- \sum_{l\geq 1} (-1)^l \tr\left\{ \sqrt{R_0} \phi\psi \sqrt{R_0}(\sqrt{R_0} q \sqrt{R_0})^l \right\}\\
    &=\sum_{l\geq 1} (-1)^l \left[\tr\left\{ r(\phi\psi) r(q_n )^l \right\}-  \tr\left\{ r(\phi\psi) r(q)^l \right\}\right]\\
    &=\sum_{l\geq 1}(-1)^l \left[\tr\left\{ r(\phi)r(\psi q_n) r(q_n)^{l-1} \right\} - \tr\left\{ r(\phi)r(\psi q) r(q)^{l-1} \right\}\right]\\
    &+ \sum_{l\geq 1}(-1)^l \left[ \tr\left\{ r(\phi) A(\psi) r(q_n)^{l} \right\} - \tr\left\{ r(\phi) A(\psi) r(q)^{l} \right\}\right]\\
    &= \sum_{k\geq 0} \sum_{m\geq 0} (-1)^{k+m+1} \tr\left\{ r(\phi)r( q )^{k} r(\psi (q_n-q) )r( q_n )^{m} \right\}\\
    &+ \sum_{l\geq 0}\sum_{k\geq 0} \sum_{m\geq 0} (-1)^{l+k+m+1} \tr\left\{ r( \phi)r( q )^{l} A(\psi) r( q )^{k} r( q_n-q) r( q_n)^{m} \right\}.
\end{align*}
Then \eqref{op norm} allows us to estimate
\begin{align*}
    &\left|\int  \phi(x)[g(q_n(t))  -g(q(t))] dx\right|\\
    &\lesssim\sum_{k\geq 0} \sum_{m\geq 0} \|\phi\|_{H^{-1}}\|q\|_{H^{-1}}^{k} \|\psi (q_n-q)\|_{H^{-1}}\| q_n \|_{H^{-1}}^{m} \\
    &+ \sum_{l\geq 0}\sum_{k\geq 0} \sum_{m\geq 0} \|\phi\|_{H^{-1}}\|q\|_{H^{-1}}^{l} \varepsilon (\|q\|_{H^{-1}})^{k} \|q_n-q\|_{H^{-1}}\| q_n \|_{H^{-1}}^{m}\\
    &\lesssim \varepsilon
\end{align*}
for $n$ sufficiently large so that $\|\psi (q_n(t)-q(t))\|_{H^{-1}}<\varepsilon$ for all $t\in [0,T]$.

3. We start with the equation given by the Duhamel formula for $q_n$:
\begin{align}
\label{eq:d}
    q_n(t)=e^{4t\kappa^2 \partial_x}q_n(0)+\int_0^t 16\kappa^5 e^{4(t-s)\kappa^2 \partial_x} g'(q_n(s)) ds.
\end{align}

Fix $\phi\in C_c^\infty(\R)$ with $\|\phi\|_{H^{1}(\mathbb R)}=1$, $M>0$ so that $\supp(\phi)\subset \{x:|x|\leq M\}$. By \eqref{linear} for every $t\in[-T,T]$
\begin{align*}
    \supp(e^{4t\kappa^2 \partial_x} \phi)\subset \{x:|x|\leq M+ 4\kappa^2 T\}.
\end{align*}
We consider a smooth cutoff $\psi\in C_c^\infty$ supported on $\{x:|x|\leq M+ 5\kappa^2 T\}$ and equal to $1$ inside $\{x:|x|\leq M+ 4\kappa^2 T\}$. Then, for any sequence $\{f_n\}$ in $H^{-1}(\R)$ that converges weakly in $H^{-1}(\R)$ to $f\in H^{-1}(\mathbb R)$, we get that 
\begin{align*}
    \langle e^{4t\kappa^2 \partial_x} (f_n-f), \phi\rangle&= \langle  f_n-f, e^{-4t\kappa^2 \partial_x} \phi\rangle\\
    &=\langle  (f_n-f) \psi, e^{-4t\kappa^2 \partial_x} \phi\rangle
\end{align*}
which converges to $0$ uniformly for all $t\in[0,T]$. This ensures that 
\begin{align}
\label{eq:wl1}
    e^{4t\kappa^2 \partial_x}q_n(0) \rightharpoonup e^{4t\kappa^2 \partial_x}q(0)\quad\text{weakly in}\,\, H^{-1}.
\end{align}
For the second term of (\ref{eq:d}) we observe that, since $$q_n(s)\rightharpoonup q(s)\quad\text{weakly in} \,\, H^{-1}\quad\text{uniformly for all}\,\, s\in [0,T],$$
by part (2) 
$$g(q_n(s))-\frac{1}{2\kappa}\rightharpoonup g(q(s))-\frac{1}{2\kappa}\quad\text{weakly in} \,\, H^{1}\quad\text{uniformly for all}\,\, s\in [0,T],$$
so
$$g'(q_n(s))\rightharpoonup g'(q(s))\quad\text{weakly in} \,\, H^{-1}\quad\text{uniformly for all}\,\, s\in [0,T].$$
Now, taking advantage of \eqref{eq:wl1} again,
$$e^{4t\kappa^2 \partial_x}g'(q_n(s))\rightharpoonup e^{4t\kappa^2 \partial_x}g'(q(s))\quad\text{weakly in} \,\, H^{-1}\quad\text{uniformly for all}\,\, s,t\in [0,T].$$
This means that for $G_n(t;s):=e^{4(t-s)\kappa^2 \partial_x}[g'(q_n(s))-g'(q(s))]$, for any given $\varepsilon>0$
\begin{align*}
    \left|\langle G_n(t;s), \phi\rangle\right|<\varepsilon \quad\text{for all}\,\,s,t\in [0,T]
\end{align*}
for $n$ sufficiently large (independently of $s,t$), hence
\begin{align*}
    \left|\langle \int_0^t G_n(t;s) ds, \phi\rangle\right|\leq \int_0^t\left|\langle G_n(t;s), \phi\rangle\right|ds <\varepsilon T\quad\text{for all}\,\,t\in [0,T].
\end{align*}
We conclude that
\begin{align}
\label{eq:wl2}
    \int_0^t  e^{4(t-s)\kappa^2 \partial_x} g'(q_n(s)) ds \rightharpoonup \int_0^t  e^{4(t-s)\kappa^2 \partial_x} g'(q(s)) ds\quad\text{weakly in}\,\, H^{-1}
\end{align}
uniformly for all $t\in[0,T]$.

Returning to (\ref{eq:d}), we already know that the left hand side converges weakly in $H^{-1}$ to $q(t)$ for all $t\in[0,T]$, and by (\ref{eq:wl1}) and (\ref{eq:wl2}) the right hand side converges weakly in $H^{-1}$ to $e^{4t\kappa^2 \partial_x}q(0)+\int_0^t 16\kappa^5 e^{4(t-s)\kappa^2 \partial_x} g'(q(s)) ds$ for all $t\in[0,T]$, so
\begin{align*}
    q(t)=e^{4t\kappa^2 \partial_x}q(0)+\int_0^t 16\kappa^5 e^{4(t-s)\kappa^2 \partial_x} g'(q(s)) ds\quad\text{for all}\,\, t\in[0,T],
\end{align*}
proving that $q$ is a solution to \ref{eq:Hk}, as desired.
\end{proof}

\section{Symplectic non-squeezing for the \ref{eq:Hk} flow on the line} \label{sec;sns Hk}

We are finally in a position to state and prove a symplectic non-squeezing theorem for \eqref{eq:Hk}. Note that it can only encompass small initial data; after all, the nonlinearity only makes sense for sufficiently small solutions.

\begin{theorem}
\label{sns Hk R}
Fix $\kappa\geq 1$. Let $z\in \dot H^{-\frac 1 2}(\mathbb R)$ with $\|z\|_{\dot H^{-\frac 12}(\mathbb R)}\leq \frac{\delta_0}{10}$, $l\in \dot H^{\frac 1 2} (\mathbb R)$ with $\|l\|_{\dot H^{\frac 1 2}(\mathbb R)}=1$, $\alpha\in \mathbb C$, $0<r<R<\frac{\delta_0}{10}$, and $T>0$. Then there exists $q_{\kappa,0} \in \{ q\in \dot H^{-\frac 1 2}(\mathbb R) : \|q-z\|_{\dot H^{-\frac 1 2}(\mathbb R)}< R\}$ such that the solution $q_\kappa$ to \ref{eq:Hk} with initial data $q_\kappa(0)=q_{\kappa,0}$ satisfies 
$$|\langle l, q_\kappa(T)\rangle -\alpha|>r.$$
\end{theorem}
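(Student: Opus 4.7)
The plan is to approximate the $H_\kappa$ flow by the finite-dimensional systems \eqref{eq:Hkn} on tori $\mathbb T_n$ of growing circumference, apply Gromov's theorem to each, and pass to a weak $H^{-1}$ limit on the line by combining Theorems \ref{fda} and \ref{Hk weak}. Fix a small $\eta>0$ with $r+\eta<R-\eta$, and approximate $z$ by $\tilde z\in C_c^\infty(\mathbb R)\cap\dot H^{-\frac 12}(\mathbb R)$ with $\|z-\tilde z\|_{\dot H^{-\frac 12}(\mathbb R)}$ small and $l$ by $\tilde l\in C_c^\infty(\mathbb R)$ with $\|l-\tilde l\|_{\dot H^{\frac 12}(\mathbb R)}$ small (both tolerances chosen to absorb the $O(1)$ factors that arise below). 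Choose sequences $m_n\to 0$ and $M_n,N_n,L_n\to\infty$ as in Section \ref{sec;fda}, and form $z_n=P^{L_n}_{m_n<\dots\leq M_n}\mathring{\tilde z}_{L_n}\in\mathcal H_n$ together with $\lambda_n$, the $\dot H^{\frac 12}(\mathbb T_n)$-normalization of $P^{L_n}_{m_n<\dots\leq M_n}\mathring{\tilde l}_{L_n}$; Lemma \ref{norm3} supplies $z_n\to\tilde z$ and $\lambda_n\to\tilde l$ in the relevant norms and gives $\|\lambda_n\|_{\dot H^{\frac 12}(\mathbb T_n)}=1$.

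For each large $n$, $\mathcal H_n$ is a finite-dimensional symplectic subspace of $\dot H^{-\frac 12}(\mathbb T_n)$ on which the time-$T$ map $\Phi_T^n$ of \eqref{eq:Hkn} is a smooth symplectomorphism (the hypotheses $R<\frac{\delta_0}{10}$ and $\|z\|_{\dot H^{-\frac 12}}\leq\frac{\delta_0}{10}$ ensure the smallness required for Theorem \ref{Hkn wp}). Gromov's Theorem \ref{gromov}, applied with center $z_n$, radius $R-\eta$, unit functional $\langle\lambda_n,\cdot\rangle$, and target $\alpha$, produces $u_{n,0}\in\mathcal H_n$ with $\|u_{n,0}-z_n\|_{\dot H^{-\frac 12}(\mathbb T_n)}<R-\eta$ and $|\langle\lambda_n,u_n(T)\rangle-\alpha|>r+\eta$, where $u_n$ is the corresponding solution. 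Theorem \ref{fda} then yields compactly supported $q_{n,0}\in\dot H^{-\frac 12}(\mathbb R)$ and cutoffs $\chi_n^\ast$ satisfying $\|u_{n,0}-\mathring q_{n,0}\|_{\dot H^{-\frac 12}(\mathbb T_n)}\to 0$ and $\|u_n(t)-\mathring{[\chi_n^\ast q_n(t)]}_{L_n}\|_{H^{-1}(\mathbb T_n)}\to 0$ uniformly for $t\in[0,T]$, where $q_n$ solves \eqref{eq:Hk} on $\mathbb R$ with data $q_{n,0}$. Since $q_{n,0}-\tilde z$ is compactly supported with $L_n$-periodization equal to $\mathring q_{n,0}-\mathring{\tilde z}_{L_n}$, Lemma \ref{periodization}(3) combined with the above estimates gives $\|q_{n,0}-\tilde z\|_{\dot H^{-\frac 12}(\mathbb R)}\leq R-\eta+o(1)$, whence $\|q_{n,0}-z\|_{\dot H^{-\frac 12}(\mathbb R)}<R$ for $n$ large enough.

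The sequence $\{q_{n,0}\}$ is bounded in $\dot H^{-\frac 12}(\mathbb R)$ and hence equicontinuous in $H^{-1}$ by Corollary \ref{Qc}; Theorem \ref{Hk weak} therefore supplies a subsequence along which $q_n(t)\rightharpoonup q_\kappa(t)$ weakly in $H^{-1}(\mathbb R)$ uniformly in $t\in[0,T]$, with $q_\kappa$ a solution to \eqref{eq:Hk}. Set $q_{\kappa,0}:=q_\kappa(0)$; weak lower semicontinuity of $\|\cdot-z\|_{\dot H^{-\frac 12}}$ gives $\|q_{\kappa,0}-z\|_{\dot H^{-\frac 12}(\mathbb R)}<R$. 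For the cylinder bound, the identity
\begin{align*}
\langle\lambda_n,u_n(T)\rangle_{\mathbb T_n}=\int_{\mathbb R}\lambda_n\,\chi_n^\ast\,q_n(T)\,dx+o(1),
\end{align*}
valid since $\chi_n^\ast q_n(T)$ fits inside one period of $\mathbb T_n$, reduces matters to showing $\int_{\mathbb R}\lambda_n\chi_n^\ast q_n(T)\,dx\to\langle l,q_\kappa(T)\rangle$. This will follow from the strong convergence $\lambda_n\chi_n^\ast\to l$ in $\dot H^{\frac 12}(\mathbb R)$ paired with the weak $\dot H^{-\frac 12}$ convergence $q_n(T)\rightharpoonup q_\kappa(T)$ (a consequence of the weak $H^{-1}$ convergence plus the uniform $\dot H^{-\frac 12}$ bound and testing against smooth compactly supported functions). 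The bound $|\langle\lambda_n,u_n(T)\rangle-\alpha|>r+\eta$ then descends to $|\langle l,q_\kappa(T)\rangle-\alpha|\geq r+\eta>r$, as required.

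The main technical obstacle I anticipate is establishing the strong $\dot H^{\frac 12}(\mathbb R)$ convergence $\lambda_n\chi_n^\ast\to l$: the support of $\chi_n^\ast$ grows with $n$, the periodic function $\lambda_n$ agrees with $\tilde l$ only on the fundamental period of $\mathbb T_n$ that contains its support, and the non-local nature of $\dot H^{\frac 12}$ renders comparisons between torus and line norms delicate. Careful bookkeeping via the decomposition $\lambda_n\chi_n^\ast=\tilde l\,\chi_n^\ast+(\lambda_n-\tilde l)\chi_n^\ast$, together with the asymptotic matching of norms from Lemma \ref{periodization} and the support propagation from Theorem \ref{finite speed}, should resolve this difficulty.
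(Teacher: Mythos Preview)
Your overall strategy matches the paper's, but there are two concrete gaps.

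\textbf{The ball condition.} You invoke Lemma \ref{periodization}(3) to deduce $\|q_{n,0}-\tilde z\|_{\dot H^{-\frac 12}(\mathbb R)}\leq R-\eta+o(1)$ from the corresponding torus bound on the periodization. This is not legitimate: Lemma \ref{periodization}(3) gives $\lim_{L\to\infty}\|\mathring f_L\|_{\dot H^s(\mathbb T_L)}=\|f\|_{\dot H^s(\mathbb R)}$ for a \emph{fixed} $f$, with no uniform rate. Here the function $q_{n,0}-\tilde z$ depends on $n$, so the lemma does not apply. Indeed, the paper's Lemma \ref{H-1/2 bound} performs exactly this line/torus comparison for $q_{n,0}$ and only obtains $\|q_{n,0}\|_{\dot H^{-\frac 12}(\mathbb R)}\lesssim A$ with an uncontrolled multiplicative constant --- nowhere near sharp enough to place $q_{n,0}$ inside the ball of radius $R$. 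The paper does \emph{not} bound $\|q_{n,0}-\tilde z\|_{\dot H^{-\frac 12}(\mathbb R)}$ for finite $n$; it instead passes to the weak $\dot H^{-\frac 12}$ limit $q_0$ and verifies $\|q_0-\tilde z\|_{\dot H^{-\frac 12}(\mathbb R)}<R-4\delta$ by duality, testing against each $\phi\in C_c^\infty\cap\dot H^{\frac 12}$ and exploiting that for fixed compactly supported $\phi$ one has $\int_{\mathbb R}(q_{n,0}-u_{n,0})\phi=\int_{\mathbb T_n}(\mathring q_{n,0}-u_{n,0})\mathring\phi_{L_n}$ once the supports fit.

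\textbf{The cylinder condition.} Your plan to show $\lambda_n\chi_n^\ast\to l$ strongly in $\dot H^{\frac 12}(\mathbb R)$ is an unnecessary complication, and you correctly flag it as the main obstacle. The paper avoids it entirely: after obtaining $|\langle\lambda_n,u_n(T)\rangle-\alpha|>r+10\delta$, it first replaces $\lambda_n$ by the \emph{unprojected} periodization $l_n=\mathring{\tilde l}_{L_n}$, paying $\|\lambda_n-l_n\|_{\dot H^{\frac 12}(\mathbb T_n)}\|u_n(T)\|_{\dot H^{-\frac 12}(\mathbb T_n)}\to 0$ via Lemma \ref{norm3}. The gain is that $\tilde l$ is compactly supported in one period, so $\langle l_n,\mathring f_n(T)\rangle_{\mathbb T_n}=\langle\tilde l,f_n(T)\rangle_{\mathbb R}$ \emph{exactly}, and the error $\langle l_n,u_n(T)-\mathring f_n(T)\rangle$ is controlled by $\|\tilde l\|_{H^1(\mathbb R)}\|u_n(T)-\mathring f_n(T)\|_{H^{-1}(\mathbb T_n)}\to 0$ from Theorem \ref{fda}. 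One then finishes with the weak $\dot H^{-\frac 12}$ convergence $f_n(T)\rightharpoonup q(T)$ tested against the fixed function $\tilde l$.
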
 

\begin{proof}
We fix $0<\delta<\frac{R-r}{100}<\frac{\delta_0}{1000}$. Let $A:=\|z\|_{\dot H^{-\frac 1 2} (\mathbb R)}+R<\frac{\delta_0}{5}$. Let $m_n, M_n, N_n, L_n$ be sequences such that $m_n\to 0$ and $M_n, N_n, L_n \to \infty$ as $n\to \infty$, as in Section \ref{sec;fda}. First of all, we can find $\tilde z\in \dot H^{-\frac 1 2}\cap C_c ^\infty (\R)$ and $\tilde l\in C_c^\infty (\R)$ such that
\begin{align*}
    \|z-\tilde z\|_{\dot H^{-\frac 1 2} (\mathbb R)}<\delta,\quad \|l-\tilde l\|_{\dot H^{\frac 1 2} (\mathbb R)}<\tilde \delta \quad\text{and} \quad \|\tilde l\|_{\dot H^{\frac 1 2} (\mathbb R)}=1,
\end{align*}
where $\tilde \delta>0$ is a small parameter to be chosen later.

We will only consider $n$ sufficiently large so that $\supp(\tilde z), \supp (\tilde l)\subset I_n:= [-\frac {L_n}{2}, \frac{L_n}{2}]$. We define 
\begin{align*}
    z_n(x)&:=\mathring{\tilde z}_{L_n}(x)=\sum_{j\in \mathbb Z} \tilde z(x+j L_n ),\\
    l_n(x)&:=\mathring{\tilde l}_{L_n}(x)=\sum_{j\in \mathbb Z} \tilde l(x+j L_n ).
\end{align*}
By Lemma \ref{periodization}, for all $n$ we have that $z_n, l_n\in \dot H^k(\T_n)$ for all integers $k\geq 0$ with
\begin{align}
\label{eq:norm1}
    \|z_n\|_{\dot H^{k} (\mathbb T_n)}=\|\tilde z\|_{\dot H^{k} (\mathbb R)}, \qquad \|l_n\|_{\dot H^{k} (\mathbb T_n)}=\|\tilde l\|_{\dot H^{k} (\mathbb R)}.
\end{align}
In addition, for $n$ sufficiently large, $z_n \in \dot H^{-\frac 1 2} (\mathbb T_n)$ and $l_n \in \dot H^{\frac 1 2} (\mathbb T_n)$ with
\begin{align}
\label{eq:norm2}
    \lim_{n\to\infty} \|z_n\|_{\dot H^{-\frac 1 2} (\mathbb T_n)}=\|\tilde z\|_{\dot H^{-\frac 1 2} (\mathbb R)},\qquad \lim_{n\to\infty} \|l_n\|_{\dot H^{\frac 1 2} (\mathbb T_n)}=\|\tilde l\|_{\dot H^{\frac 1 2} (\mathbb R)}=1.
\end{align} 
From now on, we also require $n$ to be sufficiently large so that $z_n \in \dot H^{-\frac 1 2} (\mathbb T_n)$, $l_n \in \dot H^{\frac 1 2} (\mathbb T_n)$ and 
\begin{align*}
    \|z_n\|_{\dot H^{-\frac 1 2} (\mathbb T_n)}&\leq \|z\|_{\dot H^{-\frac 1 2} (\mathbb R)}+\delta,\\
    \|l_n\|_{\dot H^{\frac 1 2} (\mathbb T_n)}&\leq 2.
\end{align*}

We also define
\begin{align*}
    \zeta_n&:=P^{L_n}_{m_n<\dots\leq M_n} z_n,\\
    \lambda_n&:= \frac{1}{\|P^{L_n}_{m_n<\dots\leq M_n} l_n\|_{\dot H^{\frac 1 2} (\mathbb T_n)}} P^{L_n}_{m_n<\dots\leq M_n} l_n.
\end{align*}
By Lemma \ref{norm3}, $\zeta_n\in \dot H^{-\frac 1 2}(\mathbb T_n)$, $\lambda_n\in \dot H^{\frac 1 2} (\mathbb T_n)$ with $\|\lambda_n\|_{\dot H^{\frac 1 2}(\T_n)}=1$, and
\begin{align}\label{0}
    \lim_{n\to\infty}\|\zeta_n-z_n\|_{L^2 (\mathbb T_n)}&=0,\\
    \lim_{n\to\infty}\|\lambda_n-l_n\|_{\dot H^{\frac 1 2} (\mathbb T_n)}&=0.
\end{align}
In addition,
\begin{align*}
    \|\zeta_n\|_{\dot H^{-\frac 1 2}(\mathbb T_n)}\leq \|z_n\|_{\dot H^{-\frac 1 2}(\mathbb T_n)}\leq \|z\|_{\dot H^{-\frac 1 2}(\mathbb R)}+ \delta\leq \frac{\delta_0}{5}.
\end{align*}

For every $n$ sufficiently large, we consider the equation \eqref{eq:Hkn} on $\T_n$ with initial data in $\mathcal H_n$. Theorem \ref{Hkn wp} guarantees the existence of a unique solution up to time $T_n$ for any initial data in $\mathcal H_n\cap\{f\in\dot H^{-\frac 1 2}(\T_n):\kappa^{-\frac 12}\|f\|_{\dot H^{-\frac 12}(\mathbb T_n)}\leq \delta_0\}$. We can apply Gromov's result for these finite dimensional Hamiltonian systems for $n$ large enough so that $T<T_n$ with the new parameters $\zeta_n$ and $\lambda_n$; we get witnesses $u_n$ that solve \eqref{eq:Hkn} with initial data $u_n(0)=u_{n,0}\in\mathcal H_n$ and satisfy
\begin{align}\label{1}
    \|u_{n,0}-\zeta_n\|_{\dot H^{-\frac 1 2}(\T_n)}<R-10\delta, \qquad |\langle \lambda_n, u_n(T) \rangle -\alpha| >r+10\delta.
\end{align}
It is clear that
\begin{align*}
    \|u_{n,0}\|_{\dot H^{-\frac 1 2}(\T_n)}\leq A.
\end{align*}
Moreover, by Theorem \ref{Hkn wp}, we know that for all $n$, $u_n(t)\in \dot H^{-\frac 1 2}(\T_n)$ and 
\begin{align*}
    \|u_n(t)\|_{\dot H^{-\frac 1 2}(\T_n)}\lesssim \|u_{n,0}\|_{\dot H^{-\frac 1 2}(\T_n)}\quad\text{for all}\quad t\in[0,T],
\end{align*}
hence
\begin{align}\label{2}
    \|u_n(t)\|_{\dot H^{-\frac 1 2}(\T_n)}\lesssim A\quad\text{for all}\quad t\in[0,T].
\end{align}
In addition, for every $t\in [0,T]$, the Fourier transform of $u_n(t)$ is supported in $\{\xi:\frac 1 2 m_n\leq |\xi|\leq 4 M_n\}$. 

Next, we consider $q_{n,0}:=\chi_n^0 u_{n,0}$, provided by Theorem \ref{fda} and constructed as described in section \ref{sec;fda}. We have already seen in Lemma \ref{H-1/2 bound} that $\|q_{n,0}\|_{\dot H^{-\frac 12}(\mathbb R)}\lesssim A$ uniformly for all large $n$. Thus, passing to a subsequence, $q_{n,0}$ converges weakly in $\dot H^{-\frac 1 2}(\R)$ to some $q_0\in \dot H^{-\frac 1 2}(\R)$. Also $q_{n,0}$ converges weakly in $H^{-1}(\R)$ to $q_0\in H^{-1}(\R)$. In fact, we can show that
\begin{align}
\label{eq:ball}
\|q_{0}-\tilde z\|_{\dot H^{-\frac 1 2}(\R)}<R-4\delta.    
\end{align}
Let $\phi \in \dot H^{\frac 12}\cap C_c^\infty(\mathbb R)$ with $\|\phi\|_{\dot H^{\frac 12}(\mathbb R)}=1$. We only consider $n$ large enough so that $\supp \phi \subset[-\frac{L_n}{2}, \frac{L_n}{2}]$. We observe that for any $n$
\begin{align*}
    \left| \int_{\mathbb R}(q_0-\tilde z)\phi \right|&\leq \left| \int_{\mathbb R}(q_0-q_{n,0})\phi \right| + \left| \int_{\mathbb R}(q_{n,0}-u_{n,0})\phi \right|  + \left| \int_{\mathbb R}(\zeta_n- z_n)\phi \right|\\
    &+ \left| \int_{\mathbb R}(z_n-\tilde z)\phi \right| + \left| \int_{\mathbb R}(u_{n,0}-\zeta_n)\phi \right|.
\end{align*}
By working on each term separately, we verify that the first four terms are bounded by $\delta$ for $n$ large enough, depending on $\phi$. The last term is bounded by $R-9\delta$ provided $n$ is sufficiently large, again depending on $\phi$, thus completing the proof of \eqref{eq:ball}.

This also implies that $\|q_0-z\|_{\dot H^{-\frac 12}(\mathbb R)}<R$, thus $\|q_0\|_{\dot H^{-\frac 12}(\mathbb R)}\leq A $. Additionally, by construction we get \eqref{eq:H-1 bound}, so $\|q_{n,0}\|_{H^{-1}(\mathbb R)}<\frac{\delta_0}{2}$. Theorem \ref{Hk wp} then guarantees the existence of a unique global solution $q_n$ to \ref{eq:Hk} with initial data $q_{n,0}$ and $q$ solution to \ref{eq:Hk} with initial data $q_{0}$. Furthermore, 
\begin{align*}
    \|q_n(T)\|_{\dot H^{-\frac 1 2}(\R)} \lesssim \|q_{n,0}\|_{\dot H^{-\frac 1 2}(\R)} \lesssim A.
\end{align*}
Then $q_{n}(T)$ has a subsequence that converges weakly in $\dot H^{-\frac 1 2}(\R)$.
By Theorem \ref{Hk weak}, passing to a further subsequence, $q_{n}(T)$ converges weakly in $H^{-1}(\R)$ to $q(T)$. Uniqueness of limit asserts that $q(T)\in \dot H^{-\frac 12}(\mathbb R)$ and a subsequence of $q_{n}(T)$ converges weakly in $\dot H^{-\frac 1 2}(\R)$ to $q(T)$. By \eqref{eq:finite speed cor} we can also see that, passing to a subsequence, $f_n(T)= \chi_n^\ast q_n(T)$ converges weakly in $\dot H^{-\frac 1 2}(\R)$ to $q(T)$.

Furthermore, for $n$ sufficiently large, by \eqref{1}, \eqref{2}, \eqref{0}
\begin{align*}
    |\langle l_n, u_n(T) \rangle-\alpha| &\geq |\langle \lambda_n, u_n(T) \rangle-\alpha| - |\langle \lambda_n-l_n, u_n(T) \rangle|\\
    &>r+10 \delta - \|\lambda_n-l_n\|_{\dot H^{\frac 1 2}(\T_n)} \|u_n(T)\|_{\dot H^{-\frac 1 2}(\T_n)}\\
    &>r+9\delta
\end{align*}
and 
\begin{align*}
    |\langle \tilde l, q(T) \rangle-\alpha| &\geq |\langle l_n, u_n(T) \rangle-\alpha| -|\langle l_n, u_n(T)-\mathring f_n (T)\rangle|- |\langle \tilde l, f_n (T)-q(T)\rangle|\\
    &>r+9\delta -\|\tilde l\|_{H^{1}(\R)}\|u_n(T)-\mathring f_n (T)\|_{H^{-1}(\T_n)}- \delta\\
    &> r+7 \delta
\end{align*}
for $n$ sufficiently large, depending on $\tilde l$.

Finally, by \eqref{H-1/2 conservation}
\begin{align*}
    |\langle l-\tilde l, q(T)\rangle| \leq \|l-\tilde l\|_{\dot H^{\frac 1 2}(\R)} \|q(T)\|_{\dot H^{-\frac 1 2}(\R)}\lesssim \tilde \delta A.
\end{align*}
Making sure we have chosen $\tilde \delta$ small enough so that $|\langle l-\tilde l, q(T)\rangle|<\delta$, we conclude that
\begin{align*}
    |\langle l, q(T) \rangle-\alpha|>r.
\end{align*}

\end{proof}

\section{Symplectic non-squeezing for the \ref{KdV} flow on the line}  \label{sec;sns KdV}

The goal of this section is to prove Theorem \ref{sns KdV R}. This is accomplished by exploiting the fact that the $H_\kappa$ flows are a good approximation to the KdV flow, an idea introduced in \cite{KV18}. The key property that this approximation result relies on is equicontinuity. We have already defined what it means for a set to be equicontinuous in $H^{-1}(\mathbb R)$ and have spotlighted a certain class of bounded subsets of $H^{-1}$ that have this property. Let us recall some further results about equicontinuous subsets and the flows in question from \cite{KV18}, including the precise statement of the crucial approximation of \eqref{KdV} by \eqref{eq:Hk}. 

\begin{lemma}\label{Q2}
If $Q$ is a set of Schwartz functions that is equicontinuous in $H^{-1}(\mathbb R)$, then so is 
\begin{align*}
    Q^\ast:= \{e^{J\nabla(t H_{KdV}+s H_\kappa)}q: q\in Q, t,s\in\mathbb R, \kappa\geq 1\}.
\end{align*}
\end{lemma}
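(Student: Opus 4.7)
The plan is to use the family of low-regularity conservation laws $\{\alpha(\varkappa;\cdot)\}_{\varkappa\geq 1}$, which simultaneously encode the high-frequency $H^{-1}$ tail of a potential and are preserved by both Hamiltonians at play. The key observation is that equicontinuity of a bounded subset of $B_\delta$ in $H^{-1}(\mathbb R)$ is equivalent to uniform decay of the $\alpha(\varkappa;\cdot)$ as $\varkappa\to\infty$, and this latter property transfers verbatim to $Q^\ast$ by invariance.

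First, I would prove (or import from \cite{KV18,KVZ17}) the following equivalence: for $\delta$ sufficiently small and $Q\subset B_\delta$ bounded,
$$Q \text{ is equicontinuous in } H^{-1}(\mathbb R) \iff \sup_{q\in Q}\,\varkappa\, \alpha(\varkappa;q)\to 0\quad\text{as }\varkappa\to\infty.$$
The forward direction uses the Neumann series expansion of $\alpha$, which for small enough $q$ is dominated by its $l=2$ term; by \eqref{op norm} this term equals $\tfrac{1}{4\varkappa}\int \frac{|\hat q(\xi)|^2}{\xi^2+4\varkappa^2}\,d\xi$, and a direct comparison with the characterization in Lemma \ref{Q1} yields the claim. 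The reverse direction rests on the same identity combined with the uniform $H^{-1}$ control that comes from small enough $\alpha$.

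Second, I would invoke complete integrability: since $H_{KdV}$, $H_\kappa$, and each $\alpha(\varkappa;\cdot)$ all lie in the KdV Poisson-commuting hierarchy, every $\alpha(\varkappa;\cdot)$ is a conserved quantity for the combined flow $e^{J\nabla(tH_{KdV}+sH_\kappa)}$, and in particular
$$\sup_{q^\ast\in Q^\ast}\varkappa\,\alpha(\varkappa;q^\ast)=\sup_{q\in Q}\varkappa\,\alpha(\varkappa;q)\xrightarrow{\varkappa\to\infty}0.$$
Conservation of $\alpha$ at any fixed large $\varkappa_0$ also forces the $H^{-1}$ norm to remain bounded along orbits, so $Q^\ast\subset B_{C\delta}$ for a universal $C$; this is exactly what is needed to apply the equivalence of the first step to $Q^\ast$. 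Reading that equivalence in the reverse direction yields the desired equicontinuity.

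The principal technical obstacle I expect is the first step: making the quantitative $\alpha$/tail equivalence precise with constants uniform in $\varkappa$ requires controlling the higher-order terms in the series expansion of $\alpha$ via Hilbert–Schmidt bounds on $\sqrt{R_0}\,q\sqrt{R_0}$. This is essentially the content of the operator-theoretic machinery already developed in \cite{KVZ17,KV18}, so while it is where the real work lies, it can be imported rather than redone. A secondary bookkeeping issue is to choose $\delta$ small enough at the outset so that the smallness hypothesis needed for the $\alpha$-expansion persists throughout $Q^\ast$; this is handled by the $H^{-1}$ bound derived from conservation in Step 2.
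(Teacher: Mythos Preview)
The paper does not supply its own proof of this lemma; it is quoted from \cite{KV18} as one of the ``further results about equicontinuous subsets and the flows in question'' recalled at the start of Section~\ref{sec;sns KdV}. Your outline accurately reconstructs the argument from that reference: the characterization of equicontinuity in $B_\delta$ via uniform decay of $\varkappa\,\alpha(\varkappa;\cdot)$ (which in \cite{KV18} is packaged as Proposition~2.4/Lemma~4.3), combined with conservation of every $\alpha(\varkappa;\cdot)$ under both the KdV and $H_\kappa$ flows, is precisely how the result is obtained there. The one point you leave implicit is that the lemma is stated for arbitrary equicontinuous sets of Schwartz functions, not just those in $B_\delta$; in \cite{KV18} this is handled by the scaling $q(x)\mapsto \lambda^2 q(\lambda x)$, which preserves equicontinuity and sends any bounded set into $B_\delta$ for $\lambda$ small, exactly as in the proof of Theorem~\ref{KdV weak} here.
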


\begin{lemma}\label{Q3}
If $Q^\ast$ is equicontinuous in $H^{-1}(\mathbb R)$, then
\begin{align}
    \label{main}
    \lim_{\kappa\to\infty}\sup_{q\in Q^\ast} \sup_{|t|\leq T} \|e^{tJ\nabla(H_{KdV}- H_\kappa)}q-q \|_{H^{-1}(\mathbb R)}=0.
\end{align}
\end{lemma}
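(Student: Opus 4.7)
Because the perturbation determinant $\alpha(\kappa;\cdot)$ generates a Poisson-commuting family to which both $H_{KdV}$ and $H_\kappa$ belong, the Hamiltonian flows $e^{tJ\nabla H_{KdV}}$ and $e^{sJ\nabla H_\kappa}$ commute, and therefore
\begin{align*}
e^{tJ\nabla(H_{KdV}-H_\kappa)}=e^{tJ\nabla H_{KdV}}\circ e^{-tJ\nabla H_\kappa}.
\end{align*}
By the definition of $Q^\ast$ in Lemma \ref{Q2}, the set $Q^\ast$ is invariant under each factor, and hence under their composition. Setting $q(s):=e^{sJ\nabla(H_{KdV}-H_\kappa)}q\in Q^\ast$ and applying Duhamel's formula
\begin{align*}
q(t)-q=\int_0^t\bigl[J\nabla H_{KdV}-J\nabla H_\kappa\bigr](q(s))\,ds,
\end{align*}
the statement \eqref{main} is reduced---after integrating over $|t|\leq T$---to the pointwise vector-field bound
\begin{align*}
\lim_{\kappa\to\infty}\sup_{p\in Q^\ast}\bigl\|-p'''+6pp'-4\kappa^2 p'-16\kappa^5 g'(p)\bigr\|_{H^{-1}(\mathbb R)}=0.
\end{align*}

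To prove this bound I would insert the series \eqref{series} for $g(p)-\tfrac{1}{2\kappa}=\sum_{l\geq 1}(-1)^l G_l(p;\kappa)$. A direct computation from the free resolvent kernel $R_0(x,y;\kappa)=(2\kappa)^{-1}e^{-\kappa|x-y|}$ shows that, when multiplied by $16\kappa^5$ and differentiated in $x$, the $l=1$ and $l=2$ contributions precisely cancel $-4\kappa^2 p'$ and $-p'''+6pp'$ respectively, up to Fourier-multiplier remainders whose symbols are $O(\kappa^{-2})$ uniformly in $\xi$; these remainders are harmless on bounded subsets of $H^{-1}$. Only the $l\geq 3$ tail $16\kappa^5\sum_{l\geq 3}(-1)^l G_l(p;\kappa)$ requires delicate handling.

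The main obstacle lies here. Using cyclicity of the trace together with \eqref{op norm}, each $G_l$ can be bounded in $H^1(\mathbb R)$ by $l-1$ factors of $\|\sqrt{R_0}p\sqrt{R_0}\|_{\text{op}}$ multiplied by a single trace-ideal factor; the naive estimate $\|\sqrt{R_0}p\sqrt{R_0}\|_{\text{op}}^2\lesssim\kappa^{-1}\|p\|_{H^{-1}}^2$ is insufficient to defeat the prefactor $\kappa^5$. This is exactly where equicontinuity becomes essential: splitting
\begin{align*}
\|\sqrt{R_0}p\sqrt{R_0}\|_{\text{op}}^2\leq\frac{1}{\kappa}\int_{|\xi|\leq\lambda}\frac{|\hat p(\xi)|^2}{\xi^2+4\kappa^2}\,d\xi+\frac{1}{\kappa}\int_{|\xi|>\lambda}\frac{|\hat p(\xi)|^2}{\xi^2+4\kappa^2}\,d\xi,
\end{align*}
the low-frequency piece is bounded by $(1+\lambda^2)\kappa^{-3}\|p\|_{H^{-1}}^2$, while the high-frequency piece is dominated, \emph{uniformly in} $\kappa$, by the modulus of equicontinuity of $Q^\ast$ at scale $\lambda$, which is $o_{\lambda\to\infty}(1)$ by Lemma \ref{Q1}. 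Choosing $\lambda=\lambda(\kappa)\to\infty$ slowly enough produces $o(\kappa^{-1/2})$ decay for the operator norm, and after summing the geometric series in $l$ one obtains the required vanishing of the tail in $H^{-1}$ uniformly in $p\in Q^\ast$, completing the pointwise vector-field estimate and hence \eqref{main}.
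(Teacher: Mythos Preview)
Your reduction via Duhamel to the pointwise vector-field bound
\[
\lim_{\kappa\to\infty}\sup_{p\in Q^\ast}\bigl\|-p'''+6pp'-4\kappa^2 p'-16\kappa^5 g'(p)\bigr\|_{H^{-1}(\mathbb R)}=0
\]
is where the argument breaks. The claim that the $l=1$ remainder has symbol $O(\kappa^{-2})$ uniformly in $\xi$ is false. A direct computation of the linear part gives
\[
\Bigl[-p'''-4\kappa^2 p'+16\kappa^5(-G_1)'\Bigr]^{\widehat{\ \ }}(\xi)=\frac{i\xi^5}{\xi^2+4\kappa^2}\,\hat p(\xi),
\]
whose symbol behaves like $\xi^3$ for $|\xi|\gg\kappa$. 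Thus the $H^{-1}$ norm of this piece involves $\int\frac{\xi^{10}}{(\xi^2+4\kappa^2)^2(1+\xi^2)}|\hat p(\xi)|^2\,d\xi$, and on the high-frequency region $|\xi|>\lambda$ the weight is comparable to $\xi^6/(1+\xi^2)$. Equicontinuity of $Q^\ast$ in $H^{-1}$ only furnishes smallness of $\int_{|\xi|>\lambda}\frac{|\hat p|^2}{1+\xi^2}\,d\xi$; it gives no control whatsoever on the same integral multiplied by an unbounded factor $\xi^6$. The same obstruction arises in the quadratic ($l=2$) remainder. In short, the difference of the two vector fields is \emph{not} small in $H^{-1}$ uniformly on $Q^\ast$: at high frequencies KdV carries an Airy dispersion $-\partial_x^3$ that the $H_\kappa$ flow simply does not reproduce, and an $H^{-1}$-equicontinuous family need not be uniformly bounded in any higher norm that would absorb this.

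The paper does not give its own proof but defers to \cite{KV18}, where the argument proceeds along an entirely different line. Rather than estimating the raw vector-field difference, Killip and Visan track the evolution of the diagonal Green's function $g(\varkappa;q(t))$ (equivalently, the density $\tfrac{1}{2g(\varkappa;q)}-\varkappa$) at an \emph{auxiliary} spectral parameter $\varkappa$ under the difference flow. The Lax-pair structure yields microscopic conservation laws for these quantities under both the KdV and $H_\kappa$ evolutions separately; subtracting produces an evolution equation whose right-hand side enjoys cancellations invisible at the level of $-q'''+6qq'-4\kappa^2 q'-16\kappa^5 g'(q)$, and which \emph{can} be bounded in terms of $H^{-1}$-type quantities alone. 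Equicontinuity enters exactly once, to make the operator-norm estimate \eqref{op norm} for $\sqrt{R_0(\varkappa)}\,q\,\sqrt{R_0(\varkappa)}$ decay as $\varkappa\to\infty$ uniformly over $Q^\ast$ (your low/high splitting is the right idea, but applied to the wrong object). The diffeomorphism property of $q\mapsto g(\varkappa;q)-\tfrac{1}{2\varkappa}$ then converts control of $g$ back into control of $q$ in $H^{-1}$. This detour through the integrable structure is not a technical convenience---it is what makes the result true at $H^{-1}$ regularity.
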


\begin{remark}
In \cite{KV18} the authors show \eqref{main} for a special set $Q$, namely where the elements of $Q$ form a sequence of Schwartz functions that converges in $H^{-1}(\mathbb R)$, as part of the proof of Theorem 5.1. However, the only property of $Q$ that is needed in the proof is the equicontinuity of $Q^\ast$ in $H^{-1}(\mathbb R)$.
\end{remark}

Lemma \ref{Q3} was the decisive result in the proof of the following well-posedness theorem for \eqref{KdV} in \cite{KV18}.

\begin{theorem}\label{KdV wp}
The \eqref{KdV} equation is globally well-posed in $H^{-1}(\mathbb R)$ in the following sense: The solution map extends (uniquely) from Schwartz space to a jointly continuous map
\begin{align*}
    \Phi:\mathbb R\times H^{-1}(\mathbb R)\to H^{-1}(\mathbb R).
\end{align*}
In particular, $\Phi$ has the group property: $\Phi(t + s) = \Phi(t) \circ \Phi(s)$. Moreover, each orbit $\{\Phi(t, q) : t \in \mathbb R\}$ is bounded and equicontinuous in $H^{-1}(\mathbb R)$. Concretely, 
\begin{align*}
    \sup_{t}\|q(t)\|_{H^{-1}(\mathbb R)}\lesssim  \|q(0)\|_{H^{-1}(\mathbb R)}+ \|q(0)\|_{H^{-1}(\mathbb R)}^3.
\end{align*}
If in addition $q(0)\in H^{-\frac 12}(\mathbb R)$, then
\begin{align*}
     \sup_{t}\|q(t)\|_{H^{-\frac 12}(\mathbb R)}\lesssim  \|q(0)\|_{H^{-\frac 12}(\mathbb R)}+ \|q(0)\|_{H^{-\frac 12}(\mathbb R)}^3.
\end{align*}
\end{theorem}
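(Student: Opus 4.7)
The strategy is to build the extension by approximating each $H^{-1}$ initial datum with Schwartz functions and transferring limits via the commuting $H_\kappa$ flows, which serve as a bridge between the classical theory on Schwartz data and the weak theory in $H^{-1}$. Three ingredients drive the argument: uniform bounds on Schwartz solutions coming from conservation of the perturbation-determinant quantity $\alpha(\kappa;q)$; equicontinuity of orbits in $H^{-1}$, provided by Corollary \ref{Qc} and Lemma \ref{Q2}; and the crucial approximation Lemma \ref{Q3}, which says the $H_\kappa$ flow converges to the $H_{KdV}$ flow as $\kappa\to\infty$ uniformly on equicontinuous families.

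First I would handle bounds on Schwartz data. Classical KdV theory on Schwartz space gives smooth global solutions, and conservation of $\alpha(\kappa;q(t))$ captures the high-frequency portion of the $H^{-1}$ norm (roughly $\|P_{\geq\kappa}q\|_{H^{-1}}^2\sim\alpha(\kappa;q)$ for small data). Combined with conservation of mass and momentum, this produces $\sup_t\|q(t)\|_{H^{-1}}\lesssim \|q(0)\|_{H^{-1}}+\|q(0)\|_{H^{-1}}^3$. Large data is reduced to small data by the KdV scaling $q(t,x)\mapsto \lambda^2 q(\lambda^3 t,\lambda x)$, under which $\|\cdot\|_{\dot H^{-1}}$ scales as $\lambda^{1/2}$. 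The $H^{-\frac12}$ estimate follows in parallel from higher-order conservation laws in the series expansion of $\alpha$.

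For the extension to $H^{-1}$, fix $q(0)\in H^{-1}(\R)$ and Schwartz approximants $q_n(0)\to q(0)$, producing smooth solutions $q_n(t)$. By Corollary \ref{Qc}(2) the set $\{q_n(0)\}$ is equicontinuous in $H^{-1}$, so Lemma \ref{Q2} yields equicontinuity of
\begin{equation*}
Q^\ast=\{e^{J\nabla(tH_{KdV}+sH_\kappa)}q_n(0):\,n\in\mathbb N,\,t,s\in\R,\,\kappa\geq 1\}.
\end{equation*}
Fix $T>0$ and $\varepsilon>0$; after scaling to land in the small-data regime of Theorem \ref{Hk wp}, Lemma \ref{Q3} supplies $\kappa_0$ with
\begin{equation*}
\sup_{n,\,|t|\leq T}\|e^{tJ\nabla H_{KdV}}q_n(0)-e^{tJ\nabla H_\kappa}q_n(0)\|_{H^{-1}}<\varepsilon\quad\text{for all }\kappa\geq\kappa_0,
\end{equation*}
while Theorem \ref{Hk wp} provides the Lipschitz estimate $\|e^{tJ\nabla H_\kappa}q_n(0)-e^{tJ\nabla H_\kappa}q_m(0)\|_{H^{-1}}\lesssim e^{cT}\|q_n(0)-q_m(0)\|_{H^{-1}}$. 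A triple-term triangle inequality forces $\{q_n\}$ to be Cauchy in $C([-T,T];H^{-1})$, and its limit defines $\Phi(t,q(0))$. Independence of the approximating sequence, the group property, and joint continuity on $\R\times H^{-1}$ follow by exactly the same three-term scheme.

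The hard part is Lemma \ref{Q3}, whose proof in \cite{KV18} rests on a delicate trace expansion of $\alpha$: the naive asymptotic $\alpha(\kappa;q)=\tfrac{1}{4\kappa^3}P(q)-\tfrac{1}{16\kappa^5}H_{KdV}+O(\kappa^{-7})$ only controls things pointwise in a fixed Sobolev space, whereas one needs convergence that is uniform over equicontinuous families—a substantially subtler statement relying on commutator cancellations inside $(\sqrt{R_0}q\sqrt{R_0})^l$. Once that lemma is in hand, the remaining subtlety is compatibility of the scaling reduction with equicontinuity: rescaling redistributes the frequency profile, so one must verify that the tail estimate \eqref{Q eqcts} holds uniformly in $\lambda$ over the rescaled family, which it does after a corresponding adjustment of the frequency cutoff parameter.
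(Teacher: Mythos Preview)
The paper does not supply its own proof of Theorem~\ref{KdV wp}; it is quoted as a result of \cite{KV18}, with the sentence preceding the statement indicating that Lemma~\ref{Q3} was the decisive ingredient there. Your proposal is a faithful outline of the \cite{KV18} argument as channeled through the tools the present paper records (Corollary~\ref{Qc}, Lemma~\ref{Q2}, Lemma~\ref{Q3}, Theorem~\ref{Hk wp}), so in substance you are reproducing the same route the paper defers to.

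One small correction: in your $H^{-1}$ bound paragraph you invoke ``conservation of mass and momentum'' to handle the low-frequency part. Momentum $P(q)=\tfrac12\int q^2$ controls $L^2$, not $H^{-1}$ at low frequencies, and mass $M(q)=\int q$ is not a norm. In \cite{KV18} the bound $\sup_t\|q(t)\|_{H^{-1}}\lesssim \|q(0)\|_{H^{-1}}+\|q(0)\|_{H^{-1}}^3$ comes entirely from the conservation of $\alpha(\kappa;q)$ at a \emph{fixed} $\kappa$ (after scaling to small data), using that the leading quadratic term of $\alpha(\kappa;q)$ is comparable to $\|q\|_{H^{-1}_\kappa}^2$ and the higher-order terms are controlled by powers of the same norm. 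No separate low/high splitting via $M$ and $P$ is needed or used.
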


The results above combined with our work on Section \ref{sec;weak wp} give us access to the following theorem on the preservation of weak limits under the KdV flow. As we have explained earlier, working in the weak topology is one of the essential difficulties in the infinite volume regime. Apart from being a interesting result on its own, Theorem \ref{KdV weak} will be the last ingredient for the proof of Theorem \ref{sns KdV R}. 

\begin{theorem}
\label{KdV weak}
Fix $T>0$ and $Q$ a bounded equicontinuous subset of $H^{-1}$. Let $\{q_n(0)\}$ be a sequence of Schwartz functions in $Q$ and $q_n$ the solution to \eqref{KdV} with initial data $q_n(0)$. Suppose that
\begin{align*}
  q_{n}(0)\rightharpoonup q(0)\qquad \text{as}\,\, n\to \infty\,\, \text{weakly in}\,\, H^{-1}(\mathbb R).  
\end{align*}
Then, if $q$ is the solution to \eqref{KdV} with initial data $q(0)$, passing to a subsequence, 
\begin{align*}
  q_{n}(T)\rightharpoonup q(T)\qquad \text{as}\,\, n\to \infty\,\, \text{weakly in}\,\, H^{-1}(\mathbb R).  
\end{align*}
\end{theorem}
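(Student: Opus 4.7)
The plan is to run the approximation of \eqref{KdV} by the commuting flows \eqref{eq:Hk}, pairing the norm-topology approximation from Lemma \ref{Q3} with the weak-topology preservation from Theorem \ref{Hk weak}. Since both of those tools require data in the small ball $B_{\delta_0}$, the first step is a reduction to small data by KdV scaling. Under the symmetry $q(x,t)\mapsto q^\lambda(x,t):=\lambda^2 q(\lambda x,\lambda^3 t)$, a Plancherel split at $|\xi|=1/\lambda$ shows $\|q^\lambda(0)\|_{H^{-1}(\R)}^2\lesssim\lambda\|q(0)\|_{H^{-1}(\R)}^2$ for $0<\lambda\leq 1$, and the same split controls the tail $\int_{|\xi|\geq M}(1+\xi^2)^{-1}|\widehat{q^\lambda(0)}(\xi)|^2 d\xi$ by the tail of $q(0)$ at scales $\gtrsim M/\lambda$, so by Lemma \ref{Q1} the rescaled family remains bounded and equicontinuous in $H^{-1}(\R)$. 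Rescaling is a bounded isomorphism of $H^{-1}(\R)$ and therefore preserves weak convergence (since $\langle q^\lambda,\phi\rangle=\langle q,\lambda\phi(\cdot/\lambda)\rangle$), so after fixing $\lambda$ small enough that $\{q_n^\lambda(0),\,q^\lambda(0)\}\subset B_{\delta_0/2}$ it suffices to prove $q_n^\lambda(\lambda^{-3}T)\rightharpoonup q^\lambda(\lambda^{-3}T)$ weakly in $H^{-1}(\R)$.

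Next I would introduce the $H_\kappa$-evolved analogues $v_n^\kappa(t):=e^{tJ\nabla H_\kappa}q_n^\lambda(0)$ and $v^\kappa(t):=e^{tJ\nabla H_\kappa}q^\lambda(0)$. By Lemma \ref{Q2} the combined orbit $Q^\ast$ of $\{q_n^\lambda(0)\}\cup\{q^\lambda(0)\}$ under both the KdV and the $H_\kappa$ flows is bounded and equicontinuous in $H^{-1}(\R)$, so Lemma \ref{Q3} gives
\begin{align*}
\sup_n\,\sup_{|t|\leq\lambda^{-3}T}\bigl\|q_n^\lambda(t)-v_n^\kappa(t)\bigr\|_{H^{-1}(\R)}+\bigl\|q^\lambda(t)-v^\kappa(t)\bigr\|_{H^{-1}(\R)}\longrightarrow 0
\end{align*}
as $\kappa\to\infty$. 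For each fixed $\kappa$, Theorem \ref{Hk weak} applied to the (small, equicontinuous, weakly convergent) sequence $\{q_n^\lambda(0)\}$ yields $v_n^\kappa(t)\rightharpoonup v^\kappa(t)$ weakly in $H^{-1}(\R)$, uniformly in $t\in[0,\lambda^{-3}T]$; uniqueness of the $H_\kappa$ flow together with uniqueness of weak $H^{-1}$ limits pins this down to the full sequence, not merely a subsequence.

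To combine these two inputs, for any $\phi\in H^1(\R)$ and any $\varepsilon>0$ I choose $\kappa$ so large that each norm approximation in the display above evaluated at $t=\lambda^{-3}T$ is bounded by $\varepsilon/(3\|\phi\|_{H^1(\R)})$ uniformly in $n$, and then pick $N$ so that $|\langle v_n^\kappa(\lambda^{-3}T)-v^\kappa(\lambda^{-3}T),\phi\rangle|<\varepsilon/3$ for $n\geq N$; the triangle inequality yields $|\langle q_n^\lambda(\lambda^{-3}T)-q^\lambda(\lambda^{-3}T),\phi\rangle|<\varepsilon$ for all $n\geq N$. Rescaling back returns the weak $H^{-1}$ convergence $q_n(T)\rightharpoonup q(T)$.

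The main obstacle I expect is the clean commingling of the two topologies: Lemma \ref{Q3} delivers only norm closeness, uniform in $n$, while Theorem \ref{Hk weak} only provides weak convergence for each fixed $\kappa$. The order of the two limits (first $\kappa\to\infty$ to absorb the norm errors uniformly in $n$, then $n\to\infty$ for that fixed $\kappa$) is what makes the argument work, and it crucially uses that $Q^\ast$ is equicontinuous rather than merely $H^{-1}$-bounded. A secondary but genuine subtlety is verifying that KdV scaling preserves the $H^{-1}$-equicontinuity of $Q$ uniformly, since bare $H^{-1}$-boundedness is not well-behaved under rescaling at this scale-critical regularity; here the equicontinuity hypothesis on $Q$ is being used in an essential way both to pass through Lemma \ref{Q1} and to make Lemma \ref{Q3} available on the rescaled orbit.
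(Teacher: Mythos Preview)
Your overall strategy is exactly the paper's: reduce to small data by KdV scaling, then sandwich the KdV evolution between the $H_\kappa$ evolution using the norm approximation from Lemma~\ref{Q3} and the weak-topology stability of the $H_\kappa$ flow from Theorem~\ref{Hk weak}. The order of limits you identify (first $\kappa\to\infty$ uniformly in $n$, then $n\to\infty$ for fixed $\kappa$) is also the crux of the paper's argument, and your observation that uniqueness of the $H_\kappa$ flow upgrades the subsequential weak convergence in Theorem~\ref{Hk weak} to full-sequence convergence is correct and slightly sharper than what the paper states.

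There is one genuine gap. You invoke Lemma~\ref{Q2} on the set $\{q_n^\lambda(0)\}\cup\{q^\lambda(0)\}$, but Lemma~\ref{Q2} is stated (and proved in \cite{KV18}) only for sets of \emph{Schwartz} functions; the weak limit $q^\lambda(0)$ is merely in $H^{-1}$. Consequently the term $\|q^\lambda(t)-v^\kappa(t)\|_{H^{-1}}\to 0$ is not justified as written. The paper handles this by introducing an auxiliary Schwartz sequence $f_n(0)\to q(0)$ in $H^{-1}$ and working with a five-term decomposition
\[
q_n(T)-q(T)=\bigl[e^{TJ\nabla H_{KdV}}q_n(0)-e^{TJ\nabla H_\kappa}q_n(0)\bigr]+\cdots+\bigl[e^{TJ\nabla H_{KdV}}f_n(0)-e^{TJ\nabla H_{KdV}}q(0)\bigr],
\]
so that Lemma~\ref{Q3} is only ever applied to the Schwartz data $q_n(0)$ and $f_n(0)$, while the passage from $f_n(0)$ to $q(0)$ uses the $H^{-1}$ continuity of the KdV solution map (Theorem~\ref{KdV wp}) and Theorem~\ref{Hk weak}. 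Your three-term decomposition can be repaired in exactly this way; alternatively, one can argue by density that the conclusion of Lemma~\ref{Q3} persists for $q^\lambda(0)$, but that requires a separate argument you have not supplied.
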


\begin{proof}
Suppose $Q\subset \{f\in H^{-1}(\mathbb R):\|f\|_{H^{-1}(\mathbb R)}<A\}$. First, we consider the case when $0<A<\frac{\delta_0}{2}$. This additional assumption ensures that the elements of the sequence $q_{n}(0)$ give rise to unique global solutions under the $H_\kappa$ flow.

Since $\|q_n(0)\|_{H^{-1}(\mathbb R)}<A$ for all $n$, $q(0)\in H^{-1}(\mathbb R)$ and $\|q(0)\|_{H^{-1}(\mathbb R)}\leq A$. Moreover, we can find Schwartz functions $f_n(0)\in H^{-1}(\mathbb R)$ so that $F=\{f_n(0)\}\subset \{f\in H^{-1}(\mathbb R):\|f\|_{H^{-1}(\mathbb R)}<2A\}$ is equicontinuous in $H^{-1}(\mathbb R)$ and $f_n(0)\to q(0)$ in $H^{-1}(\mathbb R)$.
Then the set $Q\cup F$ is equicontinuous in $H^{-1}(\mathbb R)$. Lemma \ref{Q2} and Lemma \ref{Q3} ensure that \eqref{main} holds. 
\begin{align*}
    q_n(T)- q(T)= & \left[e^{TJ\nabla H_{KdV}} q_n(0)-e^{TJ\nabla H_{\kappa}} q_n(0)\right]\\
    &+ \left[e^{TJ\nabla H_{\kappa}} q_n(0)- e^{TJ\nabla H_{\kappa}} q(0)\right]\\
    &+ \left[e^{TJ\nabla H_{\kappa}} q(0)- e^{TJ\nabla H_{\kappa}} f_n(0)\right]\\
    &+ \left[e^{TJ\nabla H_{\kappa}} f_n(0)- e^{TJ\nabla H_{KdV}} f_n(0)\right]\\
    &+ \left[e^{TJ\nabla H_{KdV}} f_n(0)- e^{TJ\nabla H_{KdV}} q(0)\right].
\end{align*}
By \eqref{main}, the first and fourth terms converge to $0$ in $H^{-1}(\mathbb R)$ as $\kappa\to\infty$ for all $n$. Having fixed $\kappa$ sufficiently large, passing to a subsequence the second and third terms converge to $0$ weakly in $H^{-1}(\mathbb R)$ as $n\to\infty$ due to Theorem \ref{Hk weak}. Finally, the last term converges to $0$ in $H^{-1}(\mathbb R)$ by Theorem \ref{KdV wp}. This completes the proof of this special case.

As far as general bounds $A$ are concerned, it can be reduced to the special case by rescaling. Indeed, for $\lambda \in 2^{\mathbb Z}$ we can consider 
\begin{align*}
    q_{n}^{\lambda}(t,x)=\lambda^2 q_n(\lambda^3 t, \lambda x),\quad n\in \mathbb N
\end{align*}
which also solve \eqref{KdV}. In particular, one can observe that for $N\in 2^{\mathbb N}$
\begin{align*}
    \left\|P_{> N}q_n^\lambda(0)\right\|_{H^{-1}(\mathbb R)} & \sim   \left\|P_{> N}q_n^\lambda(0)\right\|_{\dot H^{-1}(\mathbb R)} = \lambda^{\frac 12} \left\|P_{> \frac{N}{\lambda}}q_n(0)\right\|_{\dot H^{-1}(\mathbb R)}\\
    &\sim \lambda^{\frac 12} \left\|P_{> \frac{N}{\lambda}}q_n(0)\right\|_{ H^{-1}(\mathbb R)}
\end{align*}
and
\begin{align*}
    \left\|P_{\leq N}q_n^\lambda(0)\right\|_{H^{-1}(\mathbb R)} & \leq \left\|P_{\leq N}q_n^\lambda(0)\right\|_{L^2(\mathbb R)}= \lambda^{\frac 32} \left\|P_{\leq \frac{N}{\lambda}}q_n(0)\right\|_{L^2(\mathbb R)}\\
    &\lesssim \lambda^{\frac 12} N \left\|P_{\leq \frac{N}{\lambda}}q_n(0)\right\|_{H^{-1}(\mathbb R)}.
\end{align*}
The implication of these observations is twofold: on the one hand, for $N=1$ they ensure that 
\begin{align*}
\|q_n^\lambda(0)\|_{H^{-1}(\mathbb R)}\lesssim\lambda^{\frac 12} \|q_n(0)\|_{H^{-1}(\mathbb R)}\lesssim \lambda^{\frac 12} A
\end{align*}
hence $\|q_n^\lambda(0)\|_{H^{-1}(\mathbb R)}<\frac{\delta_0}{2}$ if $\lambda$ is chosen appropriately small; one the other hand, for fixed $\lambda$ we have that $\left\|P_{> N}q_n^\lambda(0)\right\|_{H^{-1}(\mathbb R)}\to 0$ as $N\to \infty$ uniformly for all $n$, thanks to the equicontinuity of $Q$. We conclude that the rescaled initial data fall under the special case of the Theorem.


Since $ q_{n}^\lambda(0)\rightharpoonup q^\lambda(0)$ as $ n\to \infty$ weakly in $ H^{-1}(\mathbb R)$, the special case of the theorem implies that $ q_{n}^\lambda(\lambda^{-3}T)\rightharpoonup q^\lambda(\lambda^{-3}T)$ as $ n\to \infty$ weakly in $ H^{-1}(\mathbb R)$. This in turn shows that $q_{n}(T)\rightharpoonup q(T)$ as $ n\to \infty$ weakly in $ H^{-1}(\mathbb R)$. 
\end{proof}

\begin{proof} [Proof of Theorem \ref{sns KdV R}]
We are given the fixed parameters $z\in \dot H^{-\frac 1 2}(\mathbb R)$, $l\in H^{\frac 1 2} (\mathbb R)$ with $\|l\|_{\dot H^{\frac 1 2}}=1$, $\alpha\in \mathbb C$, $0<r<R<\infty$, and $T>0$. 

First, we consider the case when $\|z\|_{\dot H^{-\frac 12}(\mathbb R)}<\frac{\delta_0}{10}$ and $R<\frac{\delta_0}{10}$.

We fix $0<\delta< \frac{R-r}{100}$ and take $\tilde l\in C_c^\infty(\mathbb R)$ with $\|\tilde l\|_{\dot H^{\frac 12}(\mathbb R)}=1$ such that
\begin{align*}
     \|l-\tilde l\|_{ H^{\frac 12}(\mathbb R)}<\tilde \delta
\end{align*}
with $\tilde\delta>0$ a small parameter to be chosen later. By Theorem \ref{sns Hk R}, for each $\kappa\geq 1$ we can find $q_\kappa$ solution to the \ref{eq:Hk} flow with initial data $q_{\kappa,0}\in \dot H^{-\frac 12}(\mathbb R)$ satisfying
\begin{align*}
    \|q_{\kappa,0}- z\|_{\dot H^{-\frac 12}(\mathbb R)}<R-10\delta,\qquad \left|\langle \tilde l, q_\kappa(T) \rangle-\alpha \right|>r+10 \delta.
\end{align*}
For each $\kappa\geq 1$ we can find $\tilde q_{\kappa,0}\in \dot H^{-\frac 12}(\mathbb R)\cap C_c^\infty(\mathbb R)$ so that
\begin{align*}
    \|q_{\kappa,0}-\tilde q_{\kappa,0}\|_{\dot H^{-\frac 12}(\mathbb R)}<e^{-\kappa^{10}}.
\end{align*}
We denote by $\tilde q_\kappa$ the solution to \eqref{eq:Hk} with initial data $\tilde q_{\kappa,0}$. Since $\|q_{\kappa,0}\|_{\dot H^{-\frac 12}(\mathbb R)}\leq\|z\|_{\dot H^{-\frac 12}(\mathbb R)}+ R-9\delta$ uniformly in $\kappa$, there exists $q_0 \in \dot H^{-\frac 12}(\mathbb R)$ so that, passing to a subsequence, $q_{\kappa,0}\rightharpoonup q_0$ weakly in $\dot H^{-\frac 12}$ and $H^{-1}$. In particular,
$$\|q_{0}- z\|_{\dot H^{-\frac 12}(\mathbb R)}<R,$$
as for every $\phi\in \dot H^{\frac 12}\cap C_c^\infty(\mathbb R)$ with $\|\phi\|_{\dot H^{\frac 12}(\mathbb R)}=1$
\begin{align*}
    \left| \int_{\mathbb R}(q_0-z)\phi \right|&\leq \left| \int_{\mathbb R}(q_{\kappa,0}- z)\phi \right| + \left| \int_{\mathbb R}(q_{\kappa,0}-q_0)\phi \right|\\
    &\leq \|q_{\kappa,0}- z\|_{\dot H^{-\frac 12}(\mathbb R)} +\delta\\
    &\leq R-9\delta
\end{align*}
for $\kappa$ sufficiently large, depending on $\phi$. What is more, $q_0$ is also the weak $\dot H^{-\frac 12}$ and $H^{-1}$ limit of (a subsequence of) $\tilde q_{\kappa,0}$ as $\kappa\to\infty$.

We consider the sets
\begin{align*}
    Q&= \{\tilde q_{\kappa,0}:\kappa\geq 1\},\\
    Q^\ast&= \{e^{J\nabla(t H_{KdV}+s H_\kappa)}q: q\in Q, t,s\in\mathbb R, \kappa\geq 1\}.
\end{align*}
By Corollary \ref{Qc}, Lemma \ref{Q2} and Lemma \ref{Q3}, the boundedness of $Q$ in $\dot H^{-\frac 12}(\mathbb R)$ ensures that \eqref{main} holds for the solutions we may need it. 

Let $q$ be the solution to \eqref{KdV} with initial data $q_0$. We observe that
\begin{align*}
    q(T)-\tilde q_\kappa(T)= & \left[e^{TJ\nabla H_{KdV}}q_0- e^{TJ\nabla H_{KdV}}\tilde q_{\kappa,0}\right]\\
    &+ \left[e^{TJ\nabla H_{KdV}}\tilde q_{\kappa,0}- e^{TJ\nabla H_{\kappa}}\tilde q_{\kappa,0}\right].
\end{align*}
Passing to a subsequence, one readily sees that by \eqref{main} the second term converges to $0$ strongly in $H^{-1}(\mathbb R)$ as $\kappa\to\infty$ and Theorem \ref{KdV weak} affirms that the first term converges to $0$ weakly in $H^{-1}(\mathbb R)$ as $\kappa\to\infty$, therefore
\begin{align*}
     \tilde q_{\kappa}(T)\rightharpoonup  q(T) \qquad \text{weakly in} \,\, H^{-1}(\mathbb R) \,\,\text{as}\,\,\kappa\to\infty.
\end{align*}
Since
\begin{align*}
     \|q_{\kappa}(t)-\tilde q_{\kappa}(t)\|_{H^{-1}(\mathbb R)}\lesssim  e^{- \kappa^{10}}+ \kappa^5 \int_0^t \|q_{\kappa}(s)-\tilde q_{\kappa}(s)\|_{H^{-1}(\mathbb R)} ds
\end{align*}
for all $t\geq 0$, Gr\"onwall's inequality yields that
\begin{align*}
     \|q_{\kappa}(T)-\tilde q_{\kappa}(T)\|_{H^{-1}(\mathbb R)}\lesssim  e^{- \kappa^{10}+ \kappa^5}
\end{align*}
so
\begin{align*}
      q_{\kappa}(T)\rightharpoonup  q(T) \qquad \text{weakly in} \,\, H^{-1}(\mathbb R) \,\,\text{as}\,\,\kappa\to\infty.
\end{align*}
As a consequence, for $\kappa$ large enough (depending on $\tilde \delta$)
\begin{align*}
    \left| \langle l, q(T) \rangle -\alpha\right|&\geq \left| \langle \tilde  l, q_\kappa(T) \rangle -\alpha\right|-\left| \langle \tilde l, q_\kappa(T) -q(T) \rangle\right|- \left| \langle l-\tilde l , q(T) \rangle\right|\\
    &> r+10 \delta - \delta - C\left(\|q(0)\|_{ H^{-\frac 12}(\mathbb R)}\right) \tilde \delta\\
    &>r
\end{align*}
by making sure we have chosen $\tilde \delta\ll \left[C\left(\|z\|_{\dot H^{-\frac 12}(\mathbb R)}+R\right)\right]^{-1} \delta$.

The general case follows by scaling. For $\lambda>0$, we consider the new parameters 
\begin{align*}
    z_\lambda(x)&:= \lambda^2 z(\lambda x)\in \dot H^{-\frac 12}(\mathbb R)\qquad\text{with}\quad\|z_\lambda\|_{\dot H^{-\frac 12}(\mathbb R)}=\lambda \|z\|_{\dot H^{-\frac 12}(\mathbb R)},\\
    l_\lambda(x)&:= l(\lambda x) \in  H^{\frac 12}(\mathbb R)\qquad\text{with}\quad\|l_\lambda\|_{\dot H^{\frac 12}(\mathbb R)}=\|l\|_{\dot H^{\frac 12}(\mathbb R)}=1,\\
    r_\lambda&:=\lambda r,\qquad R_\lambda:=\lambda R, \qquad \alpha_\lambda:=\lambda \alpha,\qquad T_\lambda:= \lambda^{-3}T.
\end{align*}
Assuming that $\lambda$ is sufficiently small, we can use the special case of Theorem \ref{sns KdV R} that we proved above and obtain solution to \eqref{KdV} $q_\lambda $ satisfying
\begin{align*}
    \|q_\lambda(0)-z_\lambda\|_{\dot H^{-\frac 12}(\mathbb R)}<R_\lambda\qquad\text{and}\qquad |\langle l_\lambda, q_\lambda(T_\lambda) \rangle-\alpha_\lambda|>r_\lambda.
\end{align*}
Then, taking $q(t,x):= \tfrac{1}{\lambda^{2}}q_\lambda(\tfrac{1}{\lambda^{3}}t, \tfrac{1}{\lambda}x)$ we get that
\begin{align*}
    \|q(0)-z\|_{\dot H^{-\frac 12}(\mathbb R)}&=\tfrac{1}{\lambda}\|q_\lambda(0)-z_\lambda\|_{\dot H^{-\frac 12}(\mathbb R)}<\tfrac{1}{\lambda}R_\lambda=R,\\
    |\langle l, q(T) \rangle-\alpha| &=\tfrac{1}{\lambda}|\langle l_\lambda, q_\lambda(T_\lambda) \rangle-\alpha_\lambda|>\tfrac{1}{\lambda}r_\lambda=r.
\end{align*}
\end{proof}

\section{Symplectic non-squeezing for the KdV flow on the circle}
\label{sec;sns T}

The methodology developed in the previous sections also allows us to obtain a simpler proof of the already known result that the KdV flow on the circle has the symplectic non-squeezing property. Once again, the key is to show symplectic non-squeezing for the $H_\kappa$ flows via finite dimensional approximation and argue that this property is inherited by the KdV flow.

The finite-volume setting affords us several simplifications in comparison to the line case. The most obvious manifestation of the more favorable compact setting is that it permits employing simpler finite-dimensional Hamiltonian systems to approximate the $H_\kappa$ flow; after all, truncation in space is no longer necessary. Instead, we are going to use the flows induced by the Hamiltonians 
$$H_\kappa^N(q)=-16\kappa^5 \alpha(\kappa;P^1_{\leq N}q)+4\kappa^2 \int_{\T} \frac 1 2 q^2 dx,$$
namely
\begin{equation} \tag{$H_\kappa^{N}$} \label{eq:HkN}
    \begin{cases}
    \frac{d}{dt} q_\kappa^{N} = 4\kappa^2 (q_\kappa^{N})'+ 16\kappa^5 P^1_{\leq N}g'(P^1_{\leq N} q_\kappa^{N})\\
     q_\kappa^{N}(0) \in \dot H^{-\frac 12}(\mathbb T)
    \end{cases}
\end{equation}
for $N\in 2^{\mathbb N}$.

Furthermore, the compactness of the circle allows for a much shorter argument regarding the recovery of witnesses to non-squeezing as limits of bounded sequences. More specifically, in the circle setting we can extract a strong $H^{-1}$ subsequential limit from a sequence bounded in $\dot H^{-\frac 12}$, thus avoiding working in the weak topology. This is in direct contrast to the line regime where much of our efforts were focused on understanding the behavior of weak limits under the $H_\kappa$ and KdV flows. 

Before we start, let us point out that on the circle the homogeneous and inhomogeneous Sobolev norms $\dot H^s$ and $H^s$ for $s<0$ are equivalent.

Once again, the first step is the finite-dimensional approximation of the $H_\kappa$ flows. Of course, the immediate question that arises is whether the equations induced by the Hamiltonians $H_\kappa^N$ are well-posed. One readily sees that they bear a strong resemblance to the systems employed in the treatment of the line problem, \eqref{eq:Hkn}. Looking back at Theorem \ref{Hkn wp}, we did not impose any conditions on the lengths of the tori $L_n$, so the theorem can be applied equally well to systems where the lengths change (like in the infinite-volume regime, $L_n\to\infty$) and to systems on a fixed circle (for instance $L_n=1$). The last difference between \eqref{eq:HkN} and \eqref{eq:Hkn} that we need to address is the absence of a low frequency truncation. Nevertheless, this change can only simplify the argument. 


\begin{theorem} [Well-posedness for \ref{eq:HkN}]
\label{HkN wp}
Fix $\kappa \geq 1$. Let $\mathcal H_N:=\{f\in  H^{-1}(\mathbb T): P^1_{>2N} f=0\}$, $N\in 2^{\mathbb N}$. There exists $\delta_0>0$ small enough (independent of $N$ and $\kappa$) and a sequence $T_N>0$ satisfying $\lim_{N\to\infty} T_N=\infty$ so that the following are true for every $N\in\mathbb N$: For every $u_{N,0}\in \{f\in \mathcal H_N: \kappa^{-\frac 12} \|f\|_{\dot H^{-\frac 12}(\mathbb T)}\leq \delta_0\}\subset B_{\delta_0, \kappa}^{1}$ there exists a unique solution $u_N\in C([0,T_N]) H^{-1}(\mathbb T)$ to the equation \eqref{eq:HkN}. For each such initial data $u_{N,0}$ the solution $u_N(t)\in \mathcal H_N$ obeys 
\begin{align}\label{H-1 N trunc}
  \|u_N(t)\|_{ H^{-1}(\mathbb T)}\lesssim \|u_{N,0}\|_{\dot H^{-\frac 12}(\mathbb T)}
\end{align}
and
\begin{align} \label{H-1/2 N trunc}
  \|u_N(t)\|_{\dot H^{-\frac 12}(\mathbb T)}\lesssim \|u_{N,0}\|_{\dot H^{-\frac 12}(\mathbb T)}e^{ct},
\end{align}
for all $t\in[0,T_N]$. The implicit constants here do not depend on $N$.

\end{theorem}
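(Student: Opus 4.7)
The plan is to mirror the proof of Theorem \ref{Hkn wp}, with the simplification that no low-frequency truncation is present. First I would establish local well-posedness in $H^{-1}(\mathbb T)$ by a standard Picard iteration, relying on the fact that the nonlinearity $q\mapsto 4\kappa^2 q' + 16\kappa^5 P^1_{\leq N}g'(P^1_{\leq N}q)$ is Lipschitz on small balls in $H^{-1}(\mathbb T)$. This is immediate from the diffeomorphism property of $g$ (Proposition \ref{diffeo T}) together with Bernstein estimates absorbing the derivative, exactly as in the derivation of \eqref{dif1}--\eqref{dif2}. Uniqueness on the interval of existence follows from the same contraction argument.

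To upgrade to times $T_N$ with $T_N\to\infty$, the strategy is to compare $u_N$ to the solution $v_N$ of \eqref{eq:Hk} with the same initial data $u_{N,0}$, which is globally well-posed by Theorem \ref{Hk wp}. By Duhamel's formula, for $t$ in the common interval of existence,
\begin{align*}
\|u_N(t)-v_N(t)\|_{\dot H^{-\frac12}(\mathbb T)} \lesssim \int_0^t \left\|P^1_{\leq N}g'(P^1_{\leq N}u_N(s)) - g'(v_N(s))\right\|_{\dot H^{-\frac12}(\mathbb T)} ds.
\end{align*}
For each $s$ I would insert $\pm g'(P^1_{\leq N} v_N(s))$ and apply triangle inequality, the diffeomorphism property, and Bernstein, yielding a bound by
\begin{align*}
\|P^1_{>N}v_N(s)\|_{H^{-1}(\mathbb T)} + \|P^1_{>N} g'(v_N(s))\|_{\dot H^{-\frac12}(\mathbb T)} + \|u_N(s)-v_N(s)\|_{\dot H^{-\frac12}(\mathbb T)}.
\end{align*}
The first two terms are controlled by $N^{-\frac12}\|v_N(s)\|_{\dot H^{-\frac12}(\mathbb T)}$ via Bernstein, which in turn is estimated by \eqref{H-1/2 conservation} as $N^{-\frac12} e^{cs}\|u_{N,0}\|_{\dot H^{-\frac12}(\mathbb T)}$. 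A single application of Gr\"onwall's inequality then produces
\begin{align*}
\|u_N(t)-v_N(t)\|_{\dot H^{-\frac12}(\mathbb T)} \lesssim N^{-\frac12} e^{ct}\|u_{N,0}\|_{\dot H^{-\frac12}(\mathbb T)}.
\end{align*}

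Choosing $T_N>0$ so that $N^{-\frac12}e^{cT_N}\ll 1$ while $T_N\to\infty$ (for example $T_N = \frac{1}{4c}\log N$) ensures that $u_N$ remains small in $H^{-1}(\mathbb T)$ throughout $[0,T_N]$, so the local solution extends up to time $T_N$. Estimate \eqref{H-1/2 N trunc} is then inherited from \eqref{H-1/2 conservation} via the triangle inequality against $v_N$, while \eqref{H-1 N trunc} follows from the conservation properties recorded in \eqref{H-1 conservation} together with the equivalence of $H^{-1}(\mathbb T)$ and $\dot H^{-1}(\mathbb T)$ norms on mean-zero functions and Bernstein on the frequency-localized data (controlling $\|u_{N,0}\|_{H^{-1}}$ by $\|u_{N,0}\|_{\dot H^{-\frac12}}$ since $u_{N,0}\in\mathcal{H}_N$). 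Uniqueness is a repeat of the Picard argument, unaffected by the presence of $P^1_{\leq N}$. I do not anticipate any serious obstacle: the removal of the low-frequency cutoff actually eliminates the $m_n^{\frac12}$ error term that appeared in the proof of Theorem \ref{Hkn wp}, so the estimate depends only on the single small parameter $N^{-\frac12}$, and the entire argument proceeds in parallel with the line case.
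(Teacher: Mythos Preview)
Your proposal is correct and follows essentially the same approach as the paper. The paper does not give an explicit proof of this theorem but refers back to the proof of Theorem \ref{Hkn wp}, noting that the absence of the low-frequency truncation only simplifies the argument; your write-up carries out precisely this adaptation, comparing $u_N$ to the global $H_\kappa$-flow solution $v_N$ with the same data, obtaining the $N^{-1/2}e^{ct}$ error via Bernstein and Gr\"onwall, and choosing $T_N$ accordingly.
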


We begin by proving the following lemma, which will be applied repeatedly in this and the subsequent sections.

\begin{lemma} \label{moving centers}
Let $z_n\in H^{-1}(\T)$, $z\in H^{-1}(\T)$ and  $0<R_n<\infty$, $0<R<\infty$ such that
$$\lim_{n\to\infty} \|z_n-z\|_{ H^{-1}(\mathbb T)}=0,\quad \lim_{n\to\infty}R_n=R.$$
Let $q_n$, $n\in \mathbb N$, be a sequence in $ H^{-1}(\T)$ such that
$$\|q_n-z_n\|_{\dot H^{-\frac 12}(\mathbb T)}<R_n\quad\text{for all}\quad n\in\mathbb N.$$
Then there exists $q\in  H^{-1}(\T)$ such that, passing to a subsequence, $q_n$ converges to $q$ (strongly) in $H^{-1}$. Moreover,
$$\|q-z\|_{\dot H^{-\frac 12}(\mathbb T)}\leq R.$$
\end{lemma}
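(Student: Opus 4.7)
The plan is to leverage the fact that on the circle $\mathbb T$ the embedding $\dot H^{-\frac 12}(\mathbb T) \hookrightarrow H^{-1}(\mathbb T)$ is compact. Indeed, since the hypothesis $\|q_n - z_n\|_{\dot H^{-1/2}(\mathbb T)} < R_n$ presumes that $q_n - z_n$ has mean zero, one may write
\begin{align*}
    \|q_n - z_n\|_{H^{-1}(\mathbb T)}^2 = \sum_{k \in \mathbb Z \setminus \{0\}} \frac{|\widehat{(q_n - z_n)}(k)|^2}{1+k^2}
\end{align*}
and compare with the $\dot H^{-1/2}$ norm, whose denominator is $|k|$ rather than $1+k^2$. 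Since $|k|/(1+k^2) \to 0$ as $|k| \to \infty$, the usual low-frequency finite-dimensionality plus high-frequency Bernstein-type bound $\|P^1_{>N}(q_n - z_n)\|_{H^{-1}} \lesssim N^{-1/2} \|q_n - z_n\|_{\dot H^{-1/2}}$ delivers compactness of the embedding.

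With this tool in hand, I would first observe that $\{q_n - z_n\}$ is bounded in $\dot H^{-\frac 12}(\mathbb T)$ (since $R_n \to R$). By reflexivity I extract a subsequence along which $q_n - z_n \rightharpoonup w$ weakly in $\dot H^{-\frac 12}(\mathbb T)$. Weak lower semi-continuity of the norm immediately gives
\begin{align*}
    \|w\|_{\dot H^{-1/2}(\mathbb T)} \leq \liminf_{n\to\infty} \|q_n - z_n\|_{\dot H^{-1/2}(\mathbb T)} \leq \lim_{n\to\infty} R_n = R.
\end{align*}

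Next, I would upgrade this weak convergence in $\dot H^{-\frac 12}$ to strong convergence in $H^{-1}$ via the compact embedding established above, concluding that (along this subsequence) $q_n - z_n \to w$ strongly in $H^{-1}(\mathbb T)$. Combined with the hypothesis $z_n \to z$ strongly in $H^{-1}(\mathbb T)$, we obtain $q_n \to q := w + z$ strongly in $H^{-1}(\mathbb T)$, and the previous display becomes $\|q - z\|_{\dot H^{-1/2}(\mathbb T)} \leq R$, as required.

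There is no serious obstacle here: the only nontrivial ingredient is the compactness of $\dot H^{-\frac 12}(\mathbb T) \hookrightarrow H^{-1}(\mathbb T)$, which is elementary from the Fourier-series descriptions of the two norms. This argument is genuinely a feature of the compact setting—note that the analogous embedding on $\mathbb R$ fails, which is precisely what forces the much more delicate weak-topology analysis of Section \ref{sec;weak wp} in the line case.
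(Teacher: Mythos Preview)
Your proposal is correct and follows essentially the same approach as the paper: both arguments set $f_n:=q_n-z_n$, use boundedness in $\dot H^{-\frac12}(\mathbb T)$ to extract a weak $\dot H^{-\frac12}$ limit $f$ (your $w$), upgrade to strong $H^{-1}$ convergence via the compact embedding, and then set $q:=f+z$ and invoke weak lower semicontinuity for the norm bound. Your write-up is in fact slightly more explicit than the paper's about the compactness of the embedding and the use of lower semicontinuity.
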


\begin{proof}
We look at the sequence
$$f_n:=q_n-z_n \in \dot H^{-\frac 1 2}(\mathbb T).$$
This sequence is restricted in a compact subset of $\dot H^{-\frac 12}(\T)$, since we have that $\|f_n\|_{\dot H^{-\frac 1 2}(\mathbb T)}< R_n\leq R+1$ for $n$ sufficiently large. Therefore, there exists some $f\in \dot H^{-\frac 1 2}(\mathbb T)$ such that, passing to a subsequence, $f_n$ converges to $f$ strongly in $H^{-1}(\mathbb T)$ and weakly in $\dot H^{-\frac 1 2}(\mathbb T)$. We then take 
$$q:=f+z\in H^{-1}(\mathbb T).$$ 
Observing that
$$q_n-q= f_n-f +z_n-z,$$
we conclude that it converges to $0$ strongly in $H^{-1}$.
In addition,
$$\|q-z\|_{\dot H^{-\frac 1 2}(\mathbb T)}=\|f\|_{\dot H^{-\frac 1 2}(\mathbb T)}\leq R.$$
\end{proof}

\begin{theorem}
\label{sns Hk T}
Fix $\kappa\geq 1$. Let $z\in \dot H^{-\frac 12}(\mathbb T)$ with $\|z\|_{\dot H^{-\frac 12}(\mathbb T)}<\frac{1}{10}\kappa^{\frac 12}\delta_0$, $l\in \dot H^s (\mathbb T)$ with $\|l\|_{\dot H^{\frac 1 2}}=1$, $\alpha\in \mathbb C$, $0<r<R<\frac{1}{10}\kappa^{\frac 12}\delta_0$, and $T>0$. Then there exists $q_{\kappa,0} \in \{ q\in \dot H^{-\frac 12}(\mathbb T) : \|q-z\|_{\dot H^{-\frac 1 2}}< R\}$ such that the solution $q_\kappa$ to \ref{eq:Hk} with initial data $q_\kappa(0)=q_{\kappa,0}$ satisfies 
$$|\langle l, q_\kappa(T)\rangle -\alpha|>r.$$
\end{theorem}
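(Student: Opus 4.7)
The strategy follows the pattern of Theorem \ref{sns Hk R}, but the finite-volume setting affords substantial simplifications. Only a frequency truncation (namely \eqref{eq:HkN}) is needed for the finite-dimensional approximation, and Lemma \ref{moving centers}---which encodes the compactness of the embedding $\dot H^{-\frac 12}(\mathbb T) \hookrightarrow H^{-1}(\mathbb T)$---delivers strong $H^{-1}$ convergence directly, sparing us the weak-continuity machinery of Section \ref{sec;weak wp} and the cutting-and-unwrapping construction of Section \ref{sec;fda}.

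Fix $0 < \delta < \frac{R-r}{100}$ and, for $N \in 2^{\mathbb N}$, set
$$\zeta_N := P^1_{\leq N} z, \qquad \lambda_N := \frac{P^1_{\leq N} l}{\|P^1_{\leq N} l\|_{\dot H^{\frac 12}(\mathbb T)}}.$$
Standard Littlewood-Paley convergence yields $\zeta_N \to z$ in $\dot H^{-\frac 12}(\mathbb T)$ and $\lambda_N \to l$ in $\dot H^{\frac 12}(\mathbb T)$, with $\|\lambda_N\|_{\dot H^{\frac 12}}=1$ and $\|\zeta_N\|_{\dot H^{-\frac 12}}\leq\|z\|_{\dot H^{-\frac 12}}+\delta$ for $N$ large. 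For such $N$, equation \eqref{eq:HkN} restricted to $\mathcal H_N\cap\dot H^{-\frac 12}(\mathbb T)$ is a finite-dimensional Hamiltonian system, well-posed up to a time $T_N\to\infty$ by Theorem \ref{HkN wp}. Choosing $N$ large enough that $T<T_N$ and applying Gromov's Theorem \ref{gromov} produces $u_{N,0}\in\mathcal H_N$ with
$$\|u_{N,0}-\zeta_N\|_{\dot H^{-\frac 12}(\mathbb T)}<R-10\delta, \qquad |\langle\lambda_N,u_N(T)\rangle-\alpha|>r+10\delta,$$
where $u_N$ denotes the \eqref{eq:HkN} solution emanating from $u_{N,0}$. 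The triangle inequality yields the uniform bound $\|u_{N,0}\|_{\dot H^{-\frac 12}(\mathbb T)}<\|z\|_{\dot H^{-\frac 12}}+R<\tfrac{1}{5}\kappa^{\frac 12}\delta_0$, and \eqref{H-1/2 N trunc} then gives $\sup_{t\in[0,T]}\|u_N(t)\|_{\dot H^{-\frac 12}(\mathbb T)}\lesssim\|z\|_{\dot H^{-\frac 12}}+R$.

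Next I would invoke Lemma \ref{moving centers} with $z_N=\zeta_N\to z$ in $H^{-1}$ and $R_N\equiv R-10\delta$ to extract (along a subsequence) a strong $H^{-1}(\mathbb T)$ limit $q_{\kappa,0}$ of $u_{N,0}$ satisfying $\|q_{\kappa,0}-z\|_{\dot H^{-\frac 12}}\leq R-10\delta<R$. Since also $\|q_{\kappa,0}\|_{H^{-1}(\mathbb T)}<\delta_0$, Theorem \ref{Hk wp} produces a global solution $q_\kappa$ to \eqref{eq:Hk} with initial data $q_{\kappa,0}$. The crux of the argument is to upgrade this to $u_N(T)\to q_\kappa(T)$ strongly in $H^{-1}(\mathbb T)$. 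For this I would let $v_N$ denote the \eqref{eq:Hk} solution with initial data $u_{N,0}$; a straightforward adaptation of Theorem \ref{Hkn to Hk} to the circle setting (taking $L_n=1$ and omitting the low-frequency cutoff, so that the $m_n^{\frac 12}$ loss disappears) yields
$$\|u_N(T)-v_N(T)\|_{H^{-1}(\mathbb T)} \lesssim N^{-\frac 12}e^{cT}\|u_{N,0}\|_{\dot H^{-\frac 12}(\mathbb T)} \xrightarrow[N\to\infty]{} 0,$$
while the Lipschitz estimate at the end of the proof of Theorem \ref{Hk wp} gives $\|v_N(T)-q_\kappa(T)\|_{H^{-1}(\mathbb T)}\lesssim e^{cT}\|u_{N,0}-q_{\kappa,0}\|_{H^{-1}(\mathbb T)}\to 0$.

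Finally, since the sequence $u_N(T)$ is bounded in $\dot H^{-\frac 12}(\mathbb T)$ and converges strongly in $H^{-1}(\mathbb T)$ to $q_\kappa(T)$, a further subsequence converges weakly in $\dot H^{-\frac 12}$; uniqueness of limits forces the weak $\dot H^{-\frac 12}$ limit to be $q_\kappa(T)$. Combined with $\lambda_N\to l$ in $\dot H^{\frac 12}$, this gives $\langle\lambda_N,u_N(T)\rangle\to\langle l,q_\kappa(T)\rangle$ as $N\to\infty$, so passing to the limit in the Gromov estimate yields $|\langle l,q_\kappa(T)\rangle-\alpha|\geq r+10\delta>r$, as desired. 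The only substantive technical step is the $H^{-1}$ convergence $u_N(T)\to q_\kappa(T)$, but its two ingredients (the $H_\kappa^N$-to-$H_\kappa$ approximation and the continuous dependence for \eqref{eq:Hk}) are routine adaptations of material already developed in Section \ref{sec;wp}, so I anticipate no genuine obstacle beyond bookkeeping.
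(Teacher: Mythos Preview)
Your proposal is correct and follows essentially the same approach as the paper's proof: apply Gromov's theorem to the finite-dimensional \eqref{eq:HkN} flows with frequency-truncated parameters, use Lemma \ref{moving centers} to extract a strong $H^{-1}$ limit of the initial data, and then combine the $H_\kappa^N$-to-$H_\kappa$ approximation with Lipschitz dependence for \eqref{eq:Hk} to obtain $u_N(T)\to q_\kappa(T)$ in $H^{-1}$ (and hence weakly in $\dot H^{-\frac 12}$). The only cosmetic difference is that the paper carries out the Duhamel--Gr\"onwall comparison between $u_N$ and $q_\kappa$ directly in a single estimate rather than routing through your intermediate solution $v_N$, but the underlying ingredients and bounds are identical.
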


\begin{proof}
We begin by taking $0<\delta<\frac{1}{5}(R-r)$. Thanks to Theorem \ref{HkN wp}, Gromov's theorem can be applied to the flow induced by the Hamiltonian $H_\kappa^N$ for initial data in $\{q\in \dot H^{-\frac 12}(\mathbb T):q\in \mathcal H_N, \|q\|_{\dot H^{-\frac 12}}< \kappa^{\frac 12}\delta_0\}$, for appropriately adapted parameters as long as $N$ is sufficiently large so that $T_N>T$. As a result, we obtain a sequence of witnesses $q_\kappa^N(0)\in \dot H^{-\frac 12}(\mathbb T)$, $N\in 2^\mathbb N$, with $P_{>2N}q_\kappa^N(0)=0$ such that the solution $q_\kappa^N$ to the $H_\kappa ^N$ flow with initial data $q_\kappa^N(0)$ satisfies
\begin{align*}
    \|q_\kappa^N(0)-P_{\leq N}z\|_{\dot H^{-\frac 1 2} (\mathbb T)}&< R-\delta,\\
    |\langle q_\kappa ^N(T),l_N\rangle -\alpha|&> r+3 \delta,
\end{align*}
where $l_N=\frac{1}{\|P_{\leq N} l\|_{\dot H^{\frac 1 2}(\mathbb T)}}P_{\leq N} l$. 
Note that
$$\|l-l_N\|_{\dot H^{\frac 12}(\mathbb T)}\leq \left|\frac{1}{\|P_{\leq N} l\|_{\dot H^{\frac 1 2}(\mathbb T)}}-1 \right| \|l\|_{\dot H^{\frac 12}(\mathbb T)} + \|P_{> N} l\|_{\dot H^{\frac 12}(\mathbb T)},$$
so 
\begin{align}\label{l limit}
    \lim_{N\to\infty}\|l-l_N\|_{\dot H^{\frac 12}(\mathbb T)}=0.
\end{align}

We intend to use this sequence of witnesses to construct a witness for the $H_\kappa$ flow. By Lemma \ref{moving centers}, passing to a subsequence, $q_\kappa^N(0)$ converges strongly in $H^{-1}$ to some $q_\kappa(0)\in \dot H^{-\frac 12}(\mathbb T)$ satisfying
$$\|q_\kappa(0)-z\|_{\dot H^{-\frac 1 2}(\mathbb T)}<R.$$
For every $N$ sufficiently large we consider the solution $q_\kappa^N\in C([0,T])\dot H^{-{\frac 12}}(\mathbb T)$ to \eqref{eq:HkN} with initial data $q_\kappa^N(0)$ and the solution $q_\kappa\in C([0,T])\dot H^{-{\frac 12}}(\mathbb T)$ to \eqref{eq:Hk} with initial data $q_\kappa(0)$. 
Using Duhamel's formula we get for all $t\in [0,T]$
\begin{align*}
    \|q_\kappa^N(t)- q_\kappa(t)\|_{ H^{-1}(\mathbb T)}
    \lesssim & \|q_\kappa^N(0)-q_\kappa(0)\|_{ H^{-1}(\mathbb T)} \\
    &+ 16\kappa^5 \int_0^t \|P_{\leq N}g'(P_{\leq N} q_\kappa^N(\tau))-g'(q_\kappa(\tau))\|_{ H^{-1}(\mathbb T)} d\tau.
\end{align*}
By Bernstein estimates, the diffeomorphism property of $g$, and estimate \eqref{H-1/2 conservation}, for all $s\in [0,t]$
\begin{align*}
    \|P_{\leq N}g'(P_{\leq N} & q_\kappa^N(\tau))- g'(q_\kappa(\tau))\|_{H^{-1}(\mathbb T)} \\
    &\lesssim N^{-{\frac 12}}\|q_\kappa(0)\|_{\dot H^{-{\frac 12}}(\mathbb T)} + \|q_\kappa^N(\tau)-q_\kappa(\tau)\|_{ H^{-1}(\mathbb T)},
\end{align*}
so by Gr\"onwall we get that for all $t\in[0,T]$
$$\|q_\kappa^N(t)- q_\kappa(t)\|_{ H^{-1}(\mathbb T)}\lesssim \|q_\kappa^N(0)- q_\kappa(0)\|_{ H^{-1}(\mathbb T)}+ N^{-{\frac 12}}  \| q_\kappa(0)\|_{\dot H^{-{\frac 12}}(\mathbb T)}.$$
In particular, this implies that $q_\kappa^N(T)\to q_\kappa(T)$ in $H^{-1 }(\mathbb T)$ as $N\to \infty$. Moreover, due to \eqref{H-1/2 N trunc} we have that 
\begin{align}\label{bound}
    \|q_\kappa^N(T)\|_{\dot H^{-{\frac 12}}(\mathbb T)}\lesssim \|q_\kappa^N(0)\|_{\dot H^{-{\frac 12}}(\mathbb T)}\lesssim \|z\|_{\dot H^{-{\frac 12}}(\mathbb T)}+R ,
\end{align}
therefore, passing to a subsequence, uniqueness of limits asserts that 
\begin{align}\label{weak limit}
    q_\kappa^N(T)\rightharpoonup q_\kappa(T)\qquad\text{weakly in}\,\,\dot H^{-{\frac 12} }(\mathbb T).
\end{align}
Then for $N$ sufficiently large
\begin{align*}
    |\langle q_\kappa(T), l\rangle -\alpha|&\geq |\langle q_\kappa^N(T), l_N\rangle -\alpha|- |\langle q_\kappa^N(T), l- l_N\rangle|- |\langle q_\kappa(T)-q_\kappa^N(T), l\rangle|\\
    &\geq r+3 \delta- 2\delta>r
\end{align*}
by \eqref{bound}, \eqref{l limit}, and \eqref{weak limit}. This concludes the proof of Theorem \ref{sns Hk T}.
\end{proof}

We are now ready to use the result of Theorem \ref{sns Hk T} to prove Theorem \ref{sns KdV T}.

\begin{proof}[Proof of Theorem \ref{sns KdV T}]
Once again, we take $0<\delta<\frac{1}{5}(R-r)$. Let $\tilde l\in  H^{{\frac 12}}(\mathbb T)\cap C_c^\infty(\mathbb T)$ such that $\|\tilde l\|_{\dot H^{\frac 12}(\mathbb T)}=1$ and $\|l-\tilde l\|_{ H^{\frac 12}(\mathbb T)}<\tilde \delta$ for some small parameter $\tilde \delta>0$ to be chosen later. We only consider $\kappa\geq 1$ sufficiently large so that $\|z\|_{\dot H^{-{\frac 12}}(\mathbb T)}<\frac {1}{10}\kappa^{\frac 12} \delta_0$ and $R<\frac {1}{10}\kappa^{\frac 12} \delta_0$. For all such $\kappa$, let $q_\kappa$ denote the witness to symplectic non-squeezing for the flow induced by $H_\kappa$ that we obtain from Theorem \ref{sns Hk T} for the radii $0<r+2\delta<R-\delta <\infty$ and the parameters $z, \tilde l, \alpha, T$. Since
$$\|q_\kappa(0)-z\|_{\dot H^{-\frac 1 2}(\mathbb T)}<R-\delta\qquad\text{for all}\,\, \kappa, $$
Lemma \ref{moving centers} guarantees that there exists $q(0)\in \dot H^{-{\frac 12}}(\mathbb T)$ with 
$$\|q(0)-z\|_{\dot H^{-\frac 1 2}(\mathbb T)}<R$$
such that, passing to a subsequence, 
$q_\kappa(0)$ converges to $q(0)$ strongly in $H^{-1}(\mathbb T)$ and weakly in $\dot H^{-{\frac 12}}(\mathbb T)$ as $\kappa\to\infty$. 

The next step will be to replace the sequence $q_\kappa(0)$ by a sequence of Schwartz initial data that converges to the same limit. For each $\kappa$ we can find a Schwartz function $\tilde q_\kappa(0)\in \dot H^{-{\frac 12}}(\mathbb T)$ such that 
$$\|q_\kappa(0)-\tilde q_\kappa(0)\|_{\dot H^{-{\frac 12}}(\mathbb T)}<e^{-\kappa^{10}}.$$
Note that $\|\tilde q_\kappa(0)\|_{\dot H^{-{\frac 12}}(\mathbb T)}<\|z\|_{\dot H^{-{\frac 12}}(\mathbb T)}+ R$ for $\kappa$ large. Then clearly passing to a subsequence $\tilde q_\kappa(0)$ converges to $q(0)$ strongly in $H^{-1}(\mathbb T)$ and weakly in $\dot H^{-\frac 12}(\mathbb T)$ as $\kappa\to\infty$. 

Working similarly as in the proof of Theorem \ref{sns KdV R}, we observe that the difference of the solutions $\tilde q_\kappa$ to \eqref{eq:Hk} with initial data $\tilde q_\kappa(0)$ and $q$ to (KdV) with initial data $q(0)$ can be written as
\begin{align*}
    q(T)- \tilde q_\kappa(T)= & \left(e^{tJ\nabla H_{KdV}}q(0)- e^{tJ\nabla H_{KdV}} \tilde q_\kappa(0)\right)\\
    &+\left( e^{tJ\nabla H_{KdV}}\tilde q_\kappa(0)- e^{tJ\nabla H_{\kappa}}\tilde q_\kappa(0)\right).
\end{align*}
Up to a subsequence, as $\kappa\to\infty$ the first term converges to $0$ in $H^{-1}$ by Theorem \ref{KdV wp} and the second term converges to $0$ in $H^{-1}$ by an application of \eqref{main} (which holds for the solutions in question due to the boundedness of $\tilde q_\kappa(0)$ in $\dot H^{-\frac 12}(\mathbb T)$), so $\tilde q_\kappa(T)\to q(T)$ in $H^{-1}(\mathbb T)$ as $\kappa\to\infty$. Moreover, by Duhamel's formula the difference between the solutions to the $H_\kappa$ flow with initial data $q_\kappa(0)$ and $\tilde q_\kappa(0)$ at time $t\geq 0$ satisfies
\begin{align*}
    \|\tilde q_\kappa(t)-q_\kappa(t)\|_{ H^{-1}}&\leq \|\tilde q_\kappa(0)- q_\kappa(0)\|_{H^{-1}}+16\kappa^5 \int_0^t\|g'(\tilde q_\kappa(s))- g'( q_\kappa(s))\|_{ H^{-1}} ds\\
    & \lesssim e^{-\kappa^{10}}+ 16 \kappa^5 \int_0^t\|\tilde q_\kappa(s)- q_\kappa(s)\|_{H^{-1}} ds
\end{align*}
with the implicit constant independent of $\kappa$. By Gr\"onwall's inequality we obtain that
$$\|q_\kappa(T)-q_\kappa(T)\|_{ H^{-1}(\mathbb T)}< e^{CT \kappa^5 -\kappa^{10}},$$
which suggests that $q_\kappa(T)\to q(T)$ in $ H^{-1}(\mathbb T)$ as $\kappa\to\infty$. 
Consequently, for $\kappa$ sufficiently large
\begin{align*}
    |\langle q(T), l\rangle -\alpha|&\geq |\langle q_\kappa(T),\tilde l\rangle -\alpha|- |\langle q_\kappa(T)-q(T), \tilde l\rangle|- |\langle q(T),\tilde l-l\rangle|\\
    &> r+2 \delta-\|q_\kappa(T)-q(T)\|_{H^{-1}}\|\tilde l\|_{H^1}- \|q(T)\|_{\dot H^{-\frac 12}}\|l-\tilde l\|_{\dot H^s}\\
    &>r+2 \delta-\|q_\kappa(T)-q(T)\|_{H^{-1}}\|\tilde l\|_{H^1}- C(\|z\|_{\dot H^{-\frac 12}}+R)\tilde \delta\\
    &>r
\end{align*}
provided that $\tilde \delta$ has been chosen appropriately small, thus completing the proof that $q$ is, indeed, a witness to symplectic non-squeezing for (KdV).
\end{proof}

\section{Proof of Theorems \ref{sns KdV R ell} and \ref{sns KdV T ell} }

\begin{proof}[Proof of Theorem \ref{sns KdV T ell}]
We are given $z\in H^{-1}(\mathbb T)$, $l\in H^1(\mathbb T)$ with $\|l\|_{\dot H^{\frac 12}(\mathbb T)}=1$, $\alpha\in \mathbb{C}$, $0<r<R<\infty$, and $T>0$. Let us take a small parameter $0<\delta<\frac{R-r}{5}$ and consider a sequence $z_n\in H^{-1}(\mathbb T)\cap \dot H^{-\frac 12}(\mathbb T)$ such that $\|z-z_n\|_{H^{-1}(\mathbb T)}\to 0$ as $n\to\infty$. Then, by Theorem \ref{sns KdV T}, we get solutions $q_n$ to \eqref{KdV} with initial data $q_n(0)\in \dot H^{-\frac 12}(\mathbb T)$ satisfying
\begin{align*}
    \|q_n(0)-z_n\|_{\dot H^{-\frac 12}(\mathbb T)}<R-\delta,\qquad \left|\langle l, q_n(T) \rangle-\alpha\right|>r+2\delta.
\end{align*}
Lemma \ref{moving centers} provides us with $q(0)\in H^{-1}(\mathbb T)$ with $\|q(0)-z\|_{\dot H^{-\frac 12}(\mathbb T)}<R$ such that, passing to a subsequence, $q_n(0)$ converges to $q(0)$ in $H^{-1}(\mathbb T)$ as $n\to\infty$. The continuity of the KdV flow in $H^{-1}$ then guarantees that $q_n(T)$ converges in $H^{-1}(\mathbb T)$ to the solution $q(T)$ to \eqref{KdV} with initial data $q(0)$, as $n\to\infty$. Therefore,
\begin{align*}
    \left| \langle l, q(T) \rangle -\alpha \right|& \geq  \left| \langle l, q_n(T) \rangle -\alpha \right| -  \left| \langle l, q_n(T)- g(T) \rangle \right|\\
    & > r+2\delta - \|l\|_{H^1(\mathbb T)}\|q_n(T)-q(T)\|_{H^{-1}(\mathbb T)}\\
    &>r.
\end{align*}
\end{proof}

\begin{proof}[Proof of Theorem \ref{sns KdV R ell}]
We follow an argument parallel to the one in the proof of Theorem \ref{sns KdV T ell}. Let us focus only on the aspects that require a modifies treatment.

The first point of divergence is the extraction of a subsequential limit $q(0)$ from $q_n(0)$ by an application of Lemma \ref{moving centers}. Here, we need to use a variant of that Lemma for the line; the same argument can work in the non-compact setting, the sole difference being that it can give us only weak convergence. That way we obtain $q(0)\in H^{-1}(\mathbb R)$ with $\|q(0)-z\|_{\dot H^{-\frac 12}(\mathbb R)}<R$ such that, passing to a subsequence, $q_n(0)$ converges to $q(0)$ weakly in $H^{-1}(\mathbb R)$. 

Next, instead of strong $H^{-1}$ convergence of $q_n(T)$, we can aim for the analogous weak result. The only obstacle on our way towards this is to show that the set $Q=\{q_n(0):n\in \mathbb N\}$ is equicontinuous in $H^{-1}(\mathbb R)$; then we will be able to apply Theorem \ref{KdV weak} and carry out the rest of the argument the same way we did for the circle case. However, one can readily see that the equicontinuity of $Q$ is guaranteed by the fact that $Q\subset Q_1+Q_2$, where
\begin{align*}
    Q_1=\{q_n(0)-z_n:n\in\mathbb N\}, \qquad Q_2=\{z_n:n\in\mathbb N\},
\end{align*}
and Lemma \ref{Qc}.
\end{proof}

\section{Equivalent formulations of symplectic non-squeezing} \label{sec;equivalents}

In the introduction we listed two other equivalent formulations of Gromov's Theorem. In \cite{KVZ16} the corresponding alternative expressions of symplectic non-squeezing are derived for the cubic NLS on $\mathbb R^2$. It is not suprising that the analogous statements are true for the KdV equation, both in the circle and in the line setting. Below we will present the two equivalent formulations of Theorem \ref{sns KdV T} and Theorem \ref{sns KdV T ell}.  The proofs of the respective results on the line can be reconstructed from the following arguments and the proofs of Theorems \ref{sns KdV R} and \ref{sns KdV R ell}.

\begin{corollary}
Fix $z\in  \dot H^{-\frac 12}(\mathbb T)$, $l\in H^{\frac 12} (\mathbb T)$ with $\|l\|_{\dot H^{\frac 1 2}}=1$, $\alpha\in \mathbb C$, $0<R<\infty$, and $T>0$. Then there exists solution $q\in  H^{-\frac 12}(\T)$ to (KdV) such that
\begin{align*}
    \|q(0)-z\|_{\dot H^{-\frac 1 2}(\mathbb T)}\leq R,\quad |\langle l, q(T)\rangle -\alpha|\geq R.
\end{align*}
\end{corollary}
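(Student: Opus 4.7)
The approach is to apply Theorem \ref{sns KdV T} along a sequence of strict radii approaching $R$ from below, and then pass to a limit in the resulting witnesses. For each integer $n\geq 1$, set $r_n:=R(1-\tfrac{1}{n})<R$ and apply Theorem \ref{sns KdV T} to the parameters $z,l,\alpha,r_n,R,T$. This produces initial data $q_n(0)\in\dot H^{-\frac 12}(\T)$ whose (KdV) evolution $q_n$ satisfies
\begin{align*}
    \|q_n(0)-z\|_{\dot H^{-\frac 12}(\T)}<R,\qquad |\langle l, q_n(T)\rangle -\alpha|>r_n.
\end{align*}

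Next, Lemma \ref{moving centers} applied with the constant sequences $z_n\equiv z$ and $R_n\equiv R$ furnishes, along a subsequence (still indexed by $n$), a limit $q(0)\in H^{-1}(\T)$ with $\|q(0)-z\|_{\dot H^{-\frac 12}(\T)}\leq R$ such that $q_n(0)\to q(0)$ strongly in $H^{-1}(\T)$; since $z\in\dot H^{-\frac 12}(\T)$, we also have $q(0)\in H^{-\frac 12}(\T)$. Letting $q$ denote the (KdV) evolution of $q(0)$, the joint continuity of the flow provided by Theorem \ref{KdV wp} gives $q_n(T)\to q(T)$ in $H^{-1}(\T)$.

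The remaining step, and the only real subtlety, is to pass the pairing $\langle l,\cdot\rangle$ through the limit. Since $l$ lies in $H^{\frac 12}(\T)$ rather than $H^1(\T)$, strong $H^{-1}$ convergence alone is insufficient. However, the uniform bound $\|q_n(0)\|_{H^{-\frac 12}(\T)}\leq \|z\|_{H^{-\frac 12}(\T)}+R$ combined with the $H^{-\frac 12}$ estimate of Theorem \ref{KdV wp} yields a uniform bound on $\|q_n(T)\|_{H^{-\frac 12}(\T)}$. Extracting a further subsequence that converges weakly in $H^{-\frac 12}(\T)$ and identifying this limit with $q(T)$ via the already-established $H^{-1}$ convergence, we conclude that $q(T)\in H^{-\frac 12}(\T)$ and $\langle l, q_n(T)\rangle\to \langle l, q(T)\rangle$. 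Passing to the limit in $|\langle l, q_n(T)\rangle -\alpha|>r_n$ then gives $|\langle l, q(T)\rangle -\alpha|\geq R$, completing the proof.
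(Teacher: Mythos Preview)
Your proof is correct and follows essentially the same route as the paper's: apply Theorem \ref{sns KdV T} along a sequence of radii, extract a subsequential $H^{-1}$ limit of the initial data via Lemma \ref{moving centers}, use continuity of the KdV flow to get $H^{-1}$ convergence at time $T$, and then upgrade to weak $\dot H^{-\frac12}$ convergence via the uniform $H^{-\frac12}$ bound from Theorem \ref{KdV wp} in order to pair against $l\in H^{\frac12}$. The only cosmetic difference is that the paper chooses both radii to vary (ball radius $R+\tfrac1n$ and cylinder radius $R-\tfrac1n$), whereas you keep the ball radius fixed at $R$ and let only $r_n\to R$; your choice is marginally cleaner since the $\dot H^{-\frac12}$ bound on $q_n(0)-z$ is then uniform from the outset.
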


\begin{proof}
Let $q_n\in \dot H^{-\frac 12}(\T)$ be the witnesses to symplectic non-squeezing for \eqref{KdV} given by Theorem \ref{sns KdV T} for the radii $0<R-\frac 1 n<R+\frac 1 n<\infty$, for $n\in \mathbb N$ sufficiently large. We can find a subsequence along which $q_n(0)$ converges to some $q(0)\in \dot H^{-\frac 12}(\mathbb T)$ in $H^{-1}(\T)$. Moreover, for this limit we have that 
$$\|q(0)-z\|_{\dot H^{-\frac 12}(\mathbb T)}\leq R.$$
By the continuity of the solution map for \eqref{KdV}, $q_n(T)$ converges to $q(T)$ in $H^{-1}(\T)$. In addition, the fact that 
$$\|q_n(T)\|_{H^{-\frac 12}(\mathbb T)}<C\left(\|q_n(0)\|_{H^{-\frac 12}(\mathbb T)}\right)<C\left(\|z\|_{\dot H^{-\frac 12}(\mathbb T)}+R\right)$$
and uniqueness of limits ensure that passing to a subsequence $q_n(T)$ converges to $q(T)$ weakly in $\dot H^{-\frac 12}$
Then for any $\varepsilon>0$
\begin{align*}
    |\langle q(T), l\rangle -\alpha|&\geq |\langle  q_n(T), l\rangle -\alpha|- |\langle  q_n(T)-q(T), l\rangle|\\
    &>R-\varepsilon
\end{align*}
for $n$ large enough, so we conclude that 
$$ |\langle q(T), l\rangle -\alpha|\geq R.$$
\end{proof}

\begin{corollary}
Fix $z\in  H^{-1}(\mathbb T)$, $l\in H^1 (\mathbb T)$ with $\|l\|_{\dot H^{\frac 1 2}}=1$, $\alpha\in \mathbb C$, $0<R<\infty$, and $T>0$. Then there exists solution $q\in  H^{-1}(\T)$ to (KdV) such that
\begin{align*}
    \|q(0)-z\|_{\dot H^{-\frac 1 2}(\mathbb T)}\leq R,\quad |\langle l, q(T)\rangle -\alpha|\geq R.
\end{align*}
\end{corollary}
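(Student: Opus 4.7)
The plan is to mirror the proof of the preceding corollary (stated just above for $z\in\dot H^{-\frac12}$, $l\in H^{\frac12}$), but invoke Theorem \ref{sns KdV T ell} in place of Theorem \ref{sns KdV T} and use the duality pairing of $H^1$ with $H^{-1}$ rather than of $H^{\frac12}$ with $\dot H^{-\frac12}$. Fix a sequence $\varepsilon_n \downarrow 0$ with $\varepsilon_n < R/2$. For each $n$, apply Theorem \ref{sns KdV T ell} with the given $z,l,\alpha,T$ and with radii $r = R - \varepsilon_n$, $R' = R + \varepsilon_n$; this produces $q_n(0) \in H^{-1}(\mathbb T)$ so that the \eqref{KdV} solution $q_n$ with initial data $q_n(0)$ satisfies
\begin{align*}
\|q_n(0) - z\|_{\dot H^{-\frac 12}(\mathbb T)} < R + \varepsilon_n, \qquad |\langle l, q_n(T)\rangle - \alpha| > R - \varepsilon_n.
\end{align*}

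Next, I would extract the desired witness by applying Lemma \ref{moving centers} with the constant sequence $z_n \equiv z$ and radii $R_n = R + \varepsilon_n \to R$. This yields, along a subsequence, a limit $q(0) \in H^{-1}(\mathbb T)$ with $q_n(0) \to q(0)$ strongly in $H^{-1}(\mathbb T)$ and
\[
\|q(0) - z\|_{\dot H^{-\frac 12}(\mathbb T)} \leq R,
\]
which already supplies the first conclusion. By the global well-posedness and joint continuity of the \eqref{KdV} solution map in $H^{-1}(\mathbb T)$ (Theorem \ref{KdV wp}), the corresponding solutions satisfy $q_n(T) \to q(T)$ strongly in $H^{-1}(\mathbb T)$, where $q$ denotes the \eqref{KdV} solution with initial data $q(0)$.

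Finally, since $l \in H^1(\mathbb T)$, the pairing against $l$ is continuous on $H^{-1}(\mathbb T)$, so $\langle l, q_n(T)\rangle \to \langle l, q(T)\rangle$ and
\[
|\langle l, q(T)\rangle - \alpha| = \lim_n |\langle l, q_n(T)\rangle - \alpha| \geq \lim_n (R - \varepsilon_n) = R,
\]
finishing the proof. No substantive new difficulty arises: the half-derivative shift in regularity of $z$ and $l$ compared to the preceding corollary merely dictates that one invokes duality of $H^1$ against $H^{-1}$ and continuity of the KdV flow in $H^{-1}$, rather than weak continuity in $\dot H^{-\frac12}$. The only place that genuinely requires the compact setting is the strong $H^{-1}$ convergence extracted by Lemma \ref{moving centers}, which in turn rests on the compact embedding of mean-zero $\dot H^{-\frac12}(\mathbb T)$ into $H^{-1}(\mathbb T)$; this is precisely what the torus geometry affords us for free here.
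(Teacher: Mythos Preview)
Your proposal is correct and follows essentially the same approach as the paper's own proof: apply Theorem \ref{sns KdV T ell} with radii $R\pm\varepsilon_n$, extract an $H^{-1}$-convergent subsequence of initial data via Lemma \ref{moving centers}, pass to the limit using the $H^{-1}$ continuity of the KdV solution map, and conclude by pairing against $l\in H^1(\mathbb T)$. The only cosmetic difference is that the paper takes $\varepsilon_n=\tfrac1n$ and writes the final step as an $\varepsilon$-argument rather than a direct limit.
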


\begin{proof}
Let $q_n\in  H^{-1}(\T)$ be the witnesses to symplectic non-squeezing for \eqref{KdV} that we obtain from Theorem \ref{sns KdV T ell} for the radii $0<R-\frac 1 n<R+\frac 1 n<\infty$, for $n\in \mathbb N$ sufficiently large. Once again, we take advantage of Lemma \ref{moving centers}, which provides us with $q(0)\in  H^{-1}(\T)$ such that $q_n(0)$ converges to $q(0)$ in $H^{-1}(\T)$, and satisfies 
$$\|q(0)-z\|_{\dot H^{-\frac 12}(\mathbb T)}\leq R.$$
By the continuity of the solution map for \eqref{KdV}, $q_n(T)$ converges to $q(T)$ in $H^{-1}(\T)$.
This suggests that for any $\varepsilon>0$
\begin{align*}
    |\langle q(T), l\rangle -\alpha|&\geq |\langle  q_n(T), l\rangle -\alpha|- |\langle  q_n(T)-q(T), l\rangle|\\
    &>R-\varepsilon
\end{align*}
for $n$ large enough, so
$$ |\langle q(T), l\rangle -\alpha|\geq R.$$
\end{proof}

\begin{corollary} \label{area -1/2}
For every $z\in  \dot H^{-\frac 12}(\mathbb T)$, $l\in  H^\frac 12 (\mathbb T)$ with $\|l\|_{\dot H^{\frac 1 2}(\mathbb T)}=1$, $0<R<\infty$ and $T>0$,
$$\area(\{\langle l, q(T)\rangle: q\,\,\text{solves \eqref{KdV}},\,\,q(0)\in \dot H^{-\frac 12}(\mathbb T), \|q(0)-z\|_{\dot H^{-\frac 12}(\mathbb T)}<R\})\geq \pi R^2.$$
\end{corollary}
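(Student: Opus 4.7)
The plan is to argue by contradiction, using the equivalence between the radius and area formulations of Gromov's theorem, transferred to the infinite-dimensional setting via an area-preserving lift. Suppose for contradiction that $E := \{\langle l, q(T)\rangle : q$ solves \eqref{KdV}, $q(0) \in \dot H^{-\frac12}(\mathbb T)$, $\|q(0) - z\|_{\dot H^{-\frac12}(\mathbb T)} < R\}$ has area strictly less than $\pi R^2$. Set $A := \mathrm{area}(E)$ and choose $r$ with $\sqrt{A/\pi} < r < R$. Since the \eqref{KdV} solution map preserves $\dot H^{-\frac12}$-bounds (Theorem \ref{KdV wp}), $E$ is bounded in $\mathbb C$. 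By a classical planar area-preserving topology argument (e.g.\ Moser's theorem on $\mathbb C$), there is a smooth area-preserving diffeomorphism $\psi : \mathbb C \to \mathbb C$ and a point $\alpha \in \mathbb C$ with $\psi(E) \subset \overline{B(\alpha, r)}$.

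Next, we lift $\psi$ to a symplectomorphism $\Psi$ of $\dot H^{-\frac12}(\mathbb T)$. Writing $l = l_1 + i l_2$ with $l_j$ real-valued, we choose vectors $v_1, v_2 \in \dot H^{-\frac12}(\mathbb T)$ (obtained from $\partial_x l_j$ after a symplectic Gram--Schmidt) so that $V := \mathrm{span}_{\mathbb R}\{v_1, v_2\}$ is a two-dimensional symplectic subspace and the restriction of $q \mapsto \langle l, q\rangle$ to $V$ is a symplectic isomorphism onto $\mathbb C$ (identified with $\mathbb R^2$ via the standard symplectic form). Decomposing $\dot H^{-\frac12}(\mathbb T) = V \oplus V^{\perp_\omega}$ and setting $\Psi(v + w) := \psi(v) + w$ for $v \in V$, $w \in V^{\perp_\omega}$ yields a symplectomorphism satisfying $\langle l, \Psi(q)\rangle = \psi(\langle l, q\rangle)$ under this identification. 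Then the composition $\Psi \circ \Phi_T$, where $\Phi_T$ denotes the \eqref{KdV} time-$T$ map, is a symplectomorphism that sends $\{q : \|q - z\|_{\dot H^{-\frac12}} < R\}$ into the cylinder $\{q : |\langle l, q\rangle - \alpha| \leq r\}$ with $r < R$.

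The principal remaining step, and the main obstacle, is to establish a non-squeezing statement for $\Psi \circ \Phi_T$ analogous to Theorem \ref{sns KdV T}, which would immediately yield the desired contradiction. This can be handled by re-running the proof of Theorem \ref{sns KdV T} with $\Phi_T$ replaced by $\Psi \circ \Phi_T$: the argument proceeds via finite-dimensional approximation of the \eqref{eq:Hk} flow by the \eqref{eq:HkN} flows and then invokes Gromov's theorem in finite dimensions. Since Gromov's theorem applies to arbitrary symplectomorphisms, composing the finite-dimensional approximating flow with a natural finite-dimensional lift $\Psi_N$ of $\Psi$ preserves the validity of the Gromov step at each level. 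The subsequent limiting arguments (Lemma \ref{moving centers}, the approximation \eqref{main}, and Theorem \ref{KdV wp}) rely only on structural properties—well-posedness of the truncated systems, equicontinuity, and weak continuity of the $H_\kappa$ flow—all of which are unaffected by composition with the symplectomorphism $\Psi$, whose action is confined to the fixed two-dimensional symplectic subspace $V$. This produces a witness $q$ with $\|q(0) - z\|_{\dot H^{-\frac12}(\mathbb T)} < R$ yet $|\langle l, \Psi(q(T))\rangle - \alpha| > r$, contradicting $\psi(E) \subset \overline{B(\alpha, r)}$.
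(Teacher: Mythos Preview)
Your approach is correct but takes a genuinely different route from the paper's. You convert the area statement into a cylinder statement by choosing a planar area-preserving diffeomorphism $\psi$ with $\psi(E)\subset\overline{B(\alpha,r)}$, lifting it to a symplectomorphism $\Psi$ of $\dot H^{-\frac12}(\mathbb T)$ acting only on a two-dimensional symplectic plane, and then re-running Theorems~\ref{sns Hk T} and~\ref{sns KdV T} for the composed map $\Psi\circ\Phi_T$. The paper instead invokes the area formulation of Gromov's theorem \emph{directly at the finite-dimensional level}: assuming $\area(\mathcal O)<\pi R^2$, it picks $\delta>0$ so that the $4\delta$-thickening $\mathcal B_\delta$ of the compact set $\mathcal K_{R-\delta}$ has area below $\pi(R-4\delta)^2$; since each finite-dimensional image $\mathcal A_\kappa^N$ has area at least $\pi(R-4\delta)^2$ by Gromov, $\mathcal A_\kappa^N\not\subset\mathcal B_\delta$, producing a witness $q_\kappa^N$ with $\dist(\langle l_N,q_\kappa^N(T)\rangle,\mathcal K_{R-\delta})>4\delta$. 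Passing to the limit first in $N$, then in $\kappa$ (via Lemma~\ref{moving centers} and \eqref{main}, exactly as in Theorems~\ref{sns Hk T} and~\ref{sns KdV T}) yields a KdV solution with $\langle l,q(T)\rangle\in\mathcal K_{R-\delta}$ yet at positive distance from $\mathcal K_{R-\delta}$, a contradiction.

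The paper's route is shorter: it avoids the Moser-type lemma and the lift construction entirely by exploiting the area--cylinder equivalence only in finite dimensions, where it is classical. Your route is more modular, essentially reducing to the already-proved Theorem~\ref{sns KdV T} as a black box. Two points your write-up leaves implicit deserve mention: the finite-dimensional lifts $\Psi_N$ must be built from $l_N$ (so that $V_N\subset\mathcal H_N$), not from $l$ itself; and the passage to the limit relies on the continuity of $\psi$, so that $\langle l_N,q_\kappa^N(T)\rangle\to\langle l,q(T)\rangle$ forces $\psi(\langle l_N,q_\kappa^N(T)\rangle)\to\psi(\langle l,q(T)\rangle)$ and the strict inequality survives.
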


\begin{proof}
Without loss of generality, we may assume that $\|z\|_{\dot H^{-\frac 12}(\mathbb T)}<\frac{\delta_0}{10}$ and $R<\frac{\delta_0}{10}$; the general case then follows by rescaling. Under these extra assumptions we ensure the well-posedness of \eqref{eq:HkN} on $\{q\in 
\mathcal H_N: \|q-z\|_{\dot H^{-\frac 12}(\mathbb T)}<R\}$ up to time $T_N$, where $T_N\to\infty$ as $N\to\infty$. 

Our goal is to prove a bound for the area of the set
$$\mathcal O:=\{\langle l, q(T)\rangle: q\,\,\text{solves \eqref{KdV}},\,\,q(0)\in \dot H^{-\frac 12}(\mathbb T), \|q(0)-z\|_{\dot H^{-\frac 12}(\mathbb T)}<R\}.$$
We define for $0<r<R$ the sets
$$\mathcal K_r:=\{\langle l, q(T)\rangle: q\,\,\text{solves \eqref{KdV}},\,\,q(0)\in \dot H^{-\frac 12}(\mathbb T), \|q(0)-z\|_{\dot H^{-\frac 12}(\mathbb T)}\leq r\}.$$
For any fixed $0<r<R$,
$$|\langle l, q(T)\rangle|\leq \|l\|_{H^\frac 12(\mathbb T)} \|q(T)\|_{ H^{-\frac 12}(\mathbb T)}< \|l\|_{ H^\frac 12(\mathbb T)} C(\|z\|_{\dot H^{-\frac 12}(\mathbb T)}+r)$$
so each $\mathcal K_r$ is bounded. One can establish that each $\mathcal K_r$ is also closed, hence compact. Note also that
$$\mathcal O=\bigcup_{0<r<R} \mathcal K_r.$$
Suppose that $\area (\mathcal O)<\pi R^2$. Then there exists $\delta >0$ such that
$$\area(\mathcal B_\delta):=\area (\{z\in \mathbb C: \dist(z, \mathcal K_{R-\delta})\leq 4\delta\})<\pi (R-4\delta)^2.$$
We will use the analogous formulation of Gromov's theorem for a sequence of finite dimensional systems that approximate our flow and follow the arguments used in the proofs of Theorems \ref{sns Hk T} and \ref{sns KdV T} to reach a contradiction. First of all, for each $\kappa\geq 1$ and $N\in 2^{\mathbb N}$ we consider the set
$$\mathcal A_\kappa^N :=\{\langle l_N, q(T)\rangle: q\,\,\text{solves \eqref{eq:HkN}},\,\,q(0) \in \dot H^{-\frac 12}(\mathbb T), \|q(0)-z_N\|_{\dot H^{-\frac 1 2}(\mathbb T)}<R-4\delta\}$$
where $z_N = P_{\leq N}z \in \dot H^{-\frac 1 2}(\mathbb T)$, $l_N=\frac{1}{\|P_{\leq N} l\|_{\dot H^{\frac 12}(\mathbb T)}} P_{\leq N} l \in \dot H^{\frac 1 2}(\mathbb T)$. By the equivalent formulation of Gromov's theorem stated in the introduction, we get that
$$\area{(\mathcal A_\kappa^N)}\geq\pi (R-4\delta)^2.$$
If all $q$ that solve $H_\kappa^N$ with initial data $q(0) \in \dot H^{-\frac 12}(\mathbb T)$, $\|q(0)-z_N\|_{\dot H^{-\frac 1 2}(\mathbb T)}<R-4\delta$ satisfied that $\dist(\langle l_N, q(T)\rangle, \mathcal K_{R-\delta})\leq 4\delta$, then we would get that $\mathcal A_\kappa^N\subset \mathcal B_\delta$, which would in turn yield
$$\area(\mathcal A_\kappa^N)\leq \area (\mathcal B_\delta)<\pi (R-4\delta)^2,$$
resulting in a contradiction. This asserts that, for each $\kappa\geq 1$ and $N\in 2^{\mathbb N}$, there exists $q_\kappa^N$ that solves $(H_\kappa^N)$ with $q_\kappa^N(0)\in \dot H^{-\frac 12}(\mathbb T)$,
$$\|q_\kappa^N(0)-P_{\leq N}z\|_{\dot H^{-\frac 1 2}(\mathbb T)}<R-4\delta, \quad   \dist(\langle l_N, q_\kappa^N(T)\rangle, \mathcal K_{R-\delta})> 4\delta.$$
Using Lemma \ref{moving centers} as in the proof of Theorem \ref{sns Hk T}, we obtain $q_\kappa(0)\in  H^{-1}(\T)$ such that $q_\kappa^N(0)$ converges to $q_\kappa(0)$ in $ H^{-1}(\mathbb T)$ as $N\to\infty$, which also satisfies 
$$\|q_\kappa(0)-z\|_{\dot H^{-\frac 1 2}}<R-3\delta.$$
Moreover, an argument similar to the one in the proof of Theorem \ref{sns Hk T} gives us for the solution $q_\kappa$ to \eqref{eq:Hk} with initial data $q_\kappa(0)$
$$\dist(\langle l, q_\kappa(T)\rangle, \mathcal K_{R-\delta})> 3\delta.$$
Next, we use Lemma \ref{moving centers} again as in the proof of Theorem \ref{sns KdV T} to get $q(0)\in \dot H^{-\frac 12}(\T)$ such that $q_\kappa(0)$ converges to $q(0)$ in $ H^{-1}(\mathbb T)$ as $\kappa\to\infty$, with
$$\|q(0)-z\|_{\dot H^{-\frac 1 2}}<R-2\delta.$$
Imitating the proof of Theorem \ref{sns KdV T} once again, we can see that $q_\kappa(T)$ converges in $H^{-1}(\T)$ to the solution $q(T)$ to \eqref{KdV} with initial data $q(0)$ at time $T$, so
$$\dist(\langle l, q(T)\rangle, \mathcal K_{R-\delta})> 2\delta.$$
This results in a contradiction, since we found $z=\langle l, q(T)\rangle\in \mathcal K_{R-\delta}$ so that $\dist(z, \mathcal K_{R-\delta})> 2\delta$. We conclude that
$$\area(\mathcal O)\geq \pi R^2.$$

\end{proof}

\begin{corollary}
For every $z\in  H^{-1}(\mathbb T)$, $l\in  H^1 (\mathbb T)$ with $\|l\|_{\dot H^{\frac 1 2}}=1$, $0<R<\infty$ and $T>0$,
$$\area(\{\langle l, q(T)\rangle: q\,\,\text{solves \eqref{KdV}},\,\,q(0)\in \dot H^{-s}(\mathbb T), \|q(0)-z\|_{\dot H^{-\frac 12}(\mathbb T)}<R\})\geq \pi R^2.$$
\end{corollary}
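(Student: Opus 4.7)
The plan is to closely follow the proof of Corollary \ref{area -1/2}, adapting the limiting arguments to a center $z$ lying only in $H^{-1}$ and a direction $l\in H^1$. After a standard rescaling I would reduce to the regime where $\|z\|_{H^{-1}(\T)}$ and $R$ are small enough to guarantee well-posedness of \eqref{eq:HkN} and \eqref{eq:Hk} on a ball of radius slightly larger than $R$ around the relevant center up to time $T$.

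First I would set
\begin{align*}
    \mathcal K_r := \{\langle l, q(T)\rangle: q \text{ solves }\eqref{KdV},\, q(0)\in H^{-1}(\T),\, \|q(0)-z\|_{\dot H^{-\frac12}(\T)} \leq r\}
\end{align*}
for $0<r<R$ and verify that each $\mathcal K_r$ is a compact subset of $\mathbb C$: boundedness is a consequence of $|\langle l, q(T)\rangle|\leq \|l\|_{H^1(\T)}\|q(T)\|_{H^{-1}(\T)}$ together with the conservation estimate of Theorem \ref{KdV wp}, while closedness follows directly from Lemma \ref{moving centers} combined with the joint $H^{-1}$-continuity of the KdV solution map. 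Writing $\mathcal O = \bigcup_{0<r<R} \mathcal K_r$, I would argue by contradiction: assuming $\area(\mathcal O) < \pi R^2$, one extracts $\delta>0$ so that the $4\delta$-neighborhood $\mathcal B_\delta$ of $\mathcal K_{R-\delta}$ satisfies $\area(\mathcal B_\delta) < \pi(R-4\delta)^2$.

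For each $\kappa\geq 1$ sufficiently large and $N\in 2^{\mathbb N}$, I would apply the equivalent area formulation of Gromov's theorem to the finite-dimensional system \eqref{eq:HkN} with center $z_N := P^1_{\leq N} z\in\dot H^{-\frac12}(\T)$ and direction $l_N := \|P^1_{\leq N} l\|_{\dot H^{1/2}(\T)}^{-1} P^1_{\leq N} l$, producing witnesses $q_\kappa^N$ satisfying $\|q_\kappa^N(0)-z_N\|_{\dot H^{-\frac12}(\T)} < R-4\delta$ and $\dist(\langle l_N, q_\kappa^N(T)\rangle, \mathcal K_{R-\delta}) > 4\delta$. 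Passing first to the limit $N\to\infty$ (using Lemma \ref{moving centers}, the fact that $z_N\to z$ in $H^{-1}(\T)$ and $l_N\to l$ in $H^1(\T)$, and the convergence argument of the proof of Theorem \ref{sns Hk T}) extracts $q_\kappa$ solving \eqref{eq:Hk} with $\|q_\kappa(0)-z\|_{\dot H^{-\frac12}(\T)} < R-3\delta$ and $\dist(\langle l, q_\kappa(T)\rangle, \mathcal K_{R-\delta}) > 3\delta$. Then, passing to the limit $\kappa\to\infty$ exactly as in the proof of Theorem \ref{sns KdV T ell}---invoking Corollary \ref{Qc}(4) for the equicontinuity of $\{q_\kappa(0)\}$ in $H^{-1}(\T)$, the $H_\kappa$-to-KdV approximation of Lemma \ref{Q3}, and the weak continuity of the KdV flow---produces a solution $q$ to \eqref{KdV} with $\|q(0)-z\|_{\dot H^{-\frac12}(\T)} < R-2\delta$ and $\dist(\langle l, q(T)\rangle, \mathcal K_{R-\delta}) > 2\delta$, contradicting the fact that $\langle l, q(T)\rangle\in\mathcal K_{R-\delta}$ by definition.

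The main obstacle is the double limiting procedure ($N\to\infty$ then $\kappa\to\infty$), which must be arranged so that the quantitative separation from $\mathcal K_{R-\delta}$ survives each passage to the limit; the buffer $4\delta\to 3\delta\to 2\delta$ is designed precisely for this. Fortunately, every ingredient needed for this passage---Lemma \ref{moving centers}, the equicontinuity analysis of Section \ref{sec;weak wp} and Lemma \ref{Q3}, and the weak-to-weak propagation of Theorem \ref{KdV weak}---has already been assembled in the preceding sections, so the argument is essentially a transcription of the proof of Corollary \ref{area -1/2} with the improved tools corresponding to the $H^{-1}$-centered setting.
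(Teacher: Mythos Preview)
Your proposal has a genuine gap in the $N\to\infty$ step. You fix $\kappa$ and send $N\to\infty$ with center $z_N=P^1_{\leq N}z$, but Theorem~\ref{HkN wp} only furnishes a flow for \eqref{eq:HkN} on $\{f\in\mathcal H_N:\kappa^{-1/2}\|f\|_{\dot H^{-1/2}(\T)}\leq\delta_0\}$. Since $z$ lies merely in $H^{-1}(\T)$, the norms $\|z_N\|_{\dot H^{-1/2}(\T)}$ are in general unbounded as $N\to\infty$ (e.g.\ if $\hat z(\xi)=1$ for all $\xi\neq 0$ then $\|z_N\|_{\dot H^{-1/2}}^2\sim\log N$). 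Thus for any fixed $\kappa$ the ball $\{\|q-z_N\|_{\dot H^{-1/2}}<R-4\delta\}$ eventually escapes the region where \eqref{eq:HkN} is well-posed up to time $T$, and Gromov's theorem cannot be invoked there. Your rescaling only makes $\|z\|_{H^{-1}}$ small, which does not control $\sup_N\|z_N\|_{\dot H^{-1/2}}$, so the initial reduction does not repair this. The same obstruction blocks the subsequent passage through the proof of Theorem~\ref{sns Hk T}, whose hypothesis $\|z\|_{\dot H^{-1/2}}<\tfrac{1}{10}\kappa^{1/2}\delta_0$ is exactly what fails uniformly here.

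The paper sidesteps this with a much shorter argument: it uses Corollary~\ref{area -1/2} (the $z\in\dot H^{-1/2}$ case, where the double limit \emph{is} legitimate) as a black box, picks $z_n\in\dot H^{-1/2}(\T)$ with $z_n\to z$ in $H^{-1}(\T)$, and applies Corollary~\ref{area -1/2} with center $z_n$ to produce KdV witnesses $q_n$ satisfying $\|q_n(0)-z_n\|_{\dot H^{-1/2}}<R-4\delta$ and $\dist(\langle l,q_n(T)\rangle,\mathcal K_{R-\delta})>4\delta$. One application of Lemma~\ref{moving centers} (which on the torus yields \emph{strong} $H^{-1}$ subsequential limits) followed by the $H^{-1}$-continuity of the KdV solution map gives the contradiction. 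No return to the $H_\kappa^N$ or $H_\kappa$ flows, no double limit, and no appeal to weak continuity or equicontinuity is needed.
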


\begin{proof}
Once again, we consider
$$\mathcal O:=\{\langle l, q(T)\rangle: q\,\,\text{solves \eqref{KdV}},\,\,q(0)\in  H^{-1}(\mathbb T), \|q(0)-z\|_{\dot H^{-\frac 12}(\mathbb T)}<R\}$$
and for $0<r<R$
$$\mathcal K_r:=\{\langle l, q(T)\rangle: q\,\,\text{solves \eqref{KdV}},\,\,q(0)\in H^{-1}(\mathbb T), \|q(0)-z\|_{\dot H^{-\frac 12}(\mathbb T)}\leq r\}.$$
Assuming that $\area (\mathcal O)<\pi R^2$, we find $\delta >0$ such that
$$\area(\mathcal B_\delta):=\area (\{z\in \mathbb C: \dist(z, K_{R-\delta})\leq 4\delta\})<\pi (R-4\delta)^2.$$
We can find a sequence $z_n\in H^{-1}(\mathbb T)\cap \dot H^{-\frac 12}(\mathbb T)$ such that $z_n \to z$ in $H^{-1}(\mathbb T)$. Arguing as before and applying Corollary \ref{area -1/2} for the parameters $z_n$, we obtain for each $n$ some solution to \eqref{KdV} $q_n$ that satisfies
$$\|q_n(0)-z_n\|_{\dot H^{-\frac 1 2}}<R-4\delta, \quad   \dist(\langle l, q_n(T)\rangle, \mathcal K_{R-\delta})> 4\delta.$$
Lemma \ref{moving centers} provides us with $q(0)\in H^{-1}(\T)$ with $\|q(0)-z\|_{\dot H^{-\frac 1 2}}<R-3\delta$ such that passing to a subsequence $q_n(0)$ converges to $q(0)$ in $H^{-1}(\mathbb T)$ as $n\to\infty$.
Then $q_n(T)$ converges to $q(T)$ in $H^{-1}(\mathbb T)$, yielding that
$$\dist(\langle l, q(T)\rangle, \mathcal K_{R-\delta})> 3\delta.$$
Since $\langle l, q(T)\rangle\in \mathcal K_{R-\delta}$, this is a contradiction. We conclude that
$$\area(\mathcal O)\geq \pi R^2.$$
\end{proof}

\bibliographystyle{amsplain}

\end{document}